\def\R{\mathbb{R}}
\def\C{\mathbb {C}}
\def\Z{\mathbb{Z}}
\def\M{\mathcal M}
\def\Z{\mathbb Z}
\def\lie#1{\mathfrak{ #1}}
\def\lieh{\lie h}
\def\liek{\lie k}
\def\liep{\lie p}
\def\lieq{\lie q}
\def\liea{\lie a}
\def\lieg{\lie g}
\def\liem{\lie m}
\def\lien{\lie n}
\def\lieu{\lie u}
\def\lier{\lie r}
\def\liet{\lie t}
\def\lieb{\lie b}
\def\st{($\sigma$,$\theta$)}
\def\inv{^{-1}}
\def\sm#1#2#3#4{\left(\smallmatrix #1 & #2 \\ #3 & #4\endsmallmatrix\right)}
\newcommand{\Int}{\operatorname{Int}}
\newcommand{\Aut}{\operatorname{Aut}}
\newcommand{\SL}{\operatorname{SL}}
\newcommand{\Sp}{\operatorname{Sp}}
\newcommand{\SU}{\operatorname{SU}}
\newcommand{\U}{\operatorname{U}}
\newcommand{\SO}{\operatorname{SO}}
\newcommand{\Orth}{\operatorname{O}}
\newcommand{\Ad}{\operatorname{Ad}}
\newcommand{\ad}{\operatorname{ad}}
\newcommand{\conj}{\operatorname{conj}}
\renewcommand{\SS}{{\mathcal S}}
\newcommand{\Ker}{\operatorname{Ker}}
\def\quot#1#2{#1/\!\!/#2}
\def\c{_\C}
\def\GL{\operatorname{GL}}
\def\phi{{\varphi}}
\numberwithin{equation}{subsection}
\newtheorem{theorem}[subsection]{Theorem}
\newtheorem{lemma}[subsection]{Lemma}
\newtheorem{proposition}[subsection]{Proposition}
\newtheorem{corollary}[subsection]{Corollary}
\theoremstyle{definition}
\newtheorem{definition}[subsection]{Definition}
\theoremstyle{remark}
\newtheorem{remark}[subsection]{Remark}
\newtheorem{example}[subsection]{Example}
\title[Real double coset spaces and their invariants]{\boldmath Real double coset spaces and their invariants} 
\author{Aloysius G. Helminck}
\thanks{First author is partially supported by N.S.F. Grant DMS-0532140}
\address{Department of Mathematics\\
North Carolina State University\\
Raleigh, N.C., 27695}
\email{loek@math.ncsu.edu}
\author{Gerald W. Schwarz}
\thanks{Second author is partially supported by N.S.A. Grant H98230-06-1-0023}
\address{Department of Mathematics\\
Brandeis University\\
Waltham, MA 02454-9110}
\email{schwarz@brandeis.edu}
\subjclass[2000]{20G20, 22E15, 22E46, 53C35}
\keywords{Symmetric spaces, Symmetric varieties, Cartan subspaces}
\begin{document}

\begin{abstract} Let $G$ be a real form of a complex reductive group. Suppose that we are given involutions $\sigma$ and $\theta$ of $G$. Let $H=G^\sigma$ denote the fixed group of $\sigma$  and let $K=G^\theta$ denote the fixed group  of $\theta$. We are interested in calculating the double coset space $H\backslash G / K$. We use moment map and invariant theoretic techniques to calculate the double cosets, especially the ones that are closed. One salient point of our results is a stratification of a quotient of a compact torus over which the closed double cosets fiber as a collection of trivial bundles.
\end{abstract}

\maketitle

%%%%%%%%%%%%%%%%%%%%%%%%%%%%%%%%%%%%%
 \section{Introduction}  \label{introduction}

Let $U$ be a compact connected Lie group and let $U\c$ denote its complexification. Assume that we have a  real form $G$  of $U\c$ with corresponding real involution $\phi$ which preserves $U$. Assume that we have holomorphic  involutions $\sigma$ and $\theta$  of $U\c$, commuting with $\phi$, such that they generate a finite group of automorphisms of the connected center of $U\c$. Let $H$  be an open subgroup of $G^\sigma$, the fixed points of $\sigma$,  and let $K$ denote an open subgroup of $G^\theta$.   We are interested in the space of double cosets $H\backslash G/K$. As in \cite[\S 2]{Matsuki97} one can reduce to the case that 
\begin{equation} \label{eqn:assumption}
G=HG^0K. 
\end{equation}

For most of this paper we will assume that $K=G^\theta$ and explain later how this assumption can be removed. Then, as in our previous work \cite{Helm-Schwarz01}, we identify $G/K$ with a submanifold $X$ of  $G$ via the mapping $g\mapsto\beta(g):=g\theta(g\inv)$. Via this identification, the $H$ action on $G/K$ becomes the $*$-action on $X$ where $h*x:=hx\theta(h)\inv$, $h\in H$, $x\in X$. We show that there is a quotient $\quot X{H}$ parameterizing the closed orbits (then one can, in principal, determine all orbits). We can assume that the Cartan involution $\delta$ of $U\c$ commutes with $\sigma$ and $\theta$ (\S \ref{sec:Cartan}). Let $G_0$ denote $G\cap U$. Using the results of  \cite{HeinznerSchwarz} we  define a kind of moment mapping on $X$ whose zero set  $\M$ is $H_0:=(H\cap U)$-invariant and has the following properties:
\begin{itemize}
\item An orbit $H*x$ is closed if and only if it intersects $\M$.
\item For every $x\in X$, the orbit closure $\overline{H*x}$ contains a unique $H_0$-orbit in $\M$. 
\item  The inclusion $\M\to X$ induces a homeomorphism $\M/H_0\simeq\quot X{H}$.
\end{itemize}

Now $X_0:=\beta(G_0)\subset G_0$ has the $*$-action of $G_0\supset H_0$.
Let $A$ be a (connected) torus in $X$. We say that $A$ is \st-split if $\sigma(a)=\theta(a)=a\inv$ for all $a\in A$. Now let $A_0$ be a maximal \st-split torus in $G_0$ (so $A_0\subset X_0$). Then it follows from \cite{Matsuki97} (cf.\ Theorem \ref{thm:A0}) that there is a finite Weyl group $W_0^*$ acting on $A_0$ such that the inclusion $A_0\to X_0$ induces a homeomorphism $A_0/W^*_0\simeq X_0/H_0$.     The idea is to try to find a similar result for the $H_0$-action on $\M$.

Let $x\in X$. Then there is a natural submanifold $P_x$ of $X$ which is stable under conjugation by $H_x$ such that $P_xx$ is transversal to the orbit $H*x$ at $x$. If $H*x$ is closed, then an   $H_x$-stable open subset of $P_x   x$ is a slice for the action of $H$. Moreover, $P_x$ is a symmetric space for the  action of $H_x$.  We say that $x$ is a \emph{principal point\/} if the action of $H_x$ on $\SS_x:=T_eP_x$ is trivial. Let $\lieg=\lieg_0\oplus \lier_0$ be the Cartan decomposition of $\lieg$. 
If $u\in G_0$, then   $\SS_u$ is $\delta$-stable and decomposes into a compact part $\SS_u\cap \lieg_0$ and a noncompact part $\SS_u\cap\lier_0$. 
There is a natural  $H_0$-equivariant surjective map $\pi\colon \M\to X_0$ where the fiber of $\pi$ above $u\in X_0$ is $\exp(\SS_u\cap\lier_0)u$.  Thus $\M$ fibers over $X_0$ with fiber over $u$ the noncompact part of the transversal at $u$. We show that there is a natural and finite stratification of $A_0$ which is $W^*_0$-stable such that the mapping $\pi$ is a fiber bundle over each stratum. This in turn implies that   $\quot X{H}\simeq \M/H_0$ is a fiber bundle over the images of the strata in $A_0/W^*_0$. If $u$ lies in a stratum $S$ of $A_0$, then the fiber over the image of $S$ in $A_0/W^*_0$ is $\exp(\SS_u\cap \lier_0)/(H_0)_u$. Moreover, for any maximal $\sigma$-split commutative subspace $\liet$ of  $\SS_u\cap\lier_0$ there is a finite Weyl group $W(S,\liet)$  such that $\exp(\SS\cap \lier_0)/(H_0)_u\simeq\exp(\liet)/W(S,\liet)$.

There is another way to parameterize the quotient  $\quot X{H}$. Let $u\in A_0$ and let $A$ be a $\delta$-stable maximal $(\sigma,\theta_u)$-split torus. Here $\theta_u$ denotes $\theta$ followed by conjugation by $u$.  Then $Au\subset\M$. We say that $A$ (or $Au$) is \emph{standard\/} if $A\cap U=A\cap A_0$. We say that   maximal $(\sigma,\theta_{u_i})$-split tori $A_iu_i$, $i=1$, $2$, are \emph{equivalent\/}  if there is an $h\in H$ such that $h*A_1u_1=A_2u_2$. Then we show the following:
\begin{itemize}
\item For each stratum  of  $A_0/W_0^*$ there is at most one associated standard maximal $Au$. Let $\{A_iu_i\}$ be a maximal  collection of pairwise non-equivalent   tori coming from the strata. Then $\cup_i A_iu_i\to\quot X{H}$ is surjective.    If $x$ is a principal point, then $H*x$ intersects precisely one of the $A_iu_i$ and $P_x$ is a maximal $(\sigma,\theta_x)$-split torus. 
\item If $A_1u_1$  and $A_2u_2$ are standard maximal, then they are equivalent if and only if there is a $w\in W_0^*$ such that $w*(A_1u_1\cap A_0)=A_2u_2\cap A_0$.
\item If $A$  is a  maximal $(\sigma,\theta_u)$-split  torus, then the group of self-equivalences of $Au$ is a finite group. This group acts freely on the set of principal points of $Au$.
 \end{itemize}

 This paper is a natural follow up to \cite{Helm-Schwarz01} where we considered the problem of determining the quotient $G^\sigma\backslash G/G^\theta$ (the complex case, or more generally the case of an algebraically closed field of characteristic not 2).  Our techniques were those of invariant theory, i.e., slice theorems, isotropy type stratifications, etc. We also used these techniques here. The new ingredient is the use of moment-map techniques from \cite{HeinznerSchwarz}.   The moment map techniques have also been used in a recent paper of Miebach \cite{Miebach} who works in the setting of Matsuki's characterization of the double coset spaces.   The main novelty of our results is the use of   stratifications of $A_0/W_0^*$ which help  in determining the topological structure of $\quot X{H}$. Also, our arguments are somewhat simplified since it is only the action of the group $H$ which is being considered. 
     
     \smallskip
     
     We thank the referees for helpful comments.
     
\section{Cartan decomposition, Kempf-Ness set and the quotient}\label{sec:Cartan}

We now show that one can assume the existence of a Cartan involution commuting with $\sigma$ and $\theta$.

\begin{proposition} \label{prop:commute.with.delta} Assume that $\lieg$ is   semisimple. Then there is a   Cartan involution $\delta$ of $\lieg$ such that $\delta$ commutes with $\sigma$ and  $\theta'$ where $\theta'$ differs from $\theta$ by conjugation by an inner automorphism of $\lieg$.
\end{proposition}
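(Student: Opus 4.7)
My plan is to reduce the statement to two classical facts about real semisimple Lie algebras: (i) every involution of $\lieg$ admits a commuting Cartan involution, and (ii) given a Cartan involution $\delta$, any involution of $\lieg$ is inner-conjugate to one commuting with $\delta$. Both are due to Mostow/Berger and follow from the same underlying conjugacy lemma for Cartan involutions.

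The first step will be to apply (i) to $\sigma$ alone, producing a Cartan involution $\delta$ of $\lieg$ with $\delta\sigma = \sigma\delta$. The second step will be to apply (ii) to the pair $(\delta,\theta)$, obtaining an inner automorphism $\phi$ of $\lieg$ such that $\theta' := \phi\theta\phi\inv$ commutes with $\delta$. Since inner conjugation preserves the order of an automorphism, $\theta'$ is automatically an involution, and by construction it differs from $\theta$ by conjugation by an inner automorphism. This gives the proposition directly; note that we do not, and need not, preserve any commutation relation between $\sigma$ and $\theta'$.

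Both (i) and (ii) will rest on the following Mostow-type lemma: if $\delta_1$ and $\delta_2$ are two Cartan involutions of $\lieg$, then there is a unique $X$ in the $(-1)$-eigenspace of $\delta_1$ with $\delta_2 = e^{\ad X}\delta_1 e^{-\ad X}$, and the element $Z := \tfrac12 X$ then serves as a canonical ``half-way'' conjugating element. For (i) I would apply this to $\delta_1 = \delta_0$ (any initial Cartan involution) and $\delta_2 = \sigma\delta_0\sigma$, then use uniqueness of $X$ together with the identity $\sigma\delta_2\sigma = \delta_0$ to deduce $\sigma(X) = -X$, and finally check that $\delta := e^{\ad Z}\delta_0 e^{-\ad Z}$ commutes with $\sigma$. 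For (ii), the same lemma applied to $\delta_1 = \delta$ and $\delta_2 = \theta\delta\theta$ yields $Y$ in the $(-1)$-eigenspace of $\delta$, and a direct computation shows that $\phi := e^{(1/2)\ad Y}$ does the job.

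The main anticipated obstacle is the $\sigma$-equivariant choice of conjugating element in step (i), i.e.\ the verification that $\sigma(X) = -X$. This is where semisimplicity is essential: the Killing form polarized by $\delta_0$ turns the space of Cartan involutions of $\lieg$ into a Riemannian symmetric space of nonpositive curvature, on which $X$ is uniquely characterized as the logarithm of $\delta_0\delta_2$; this uniqueness then forces equivariance under any automorphism of $\lieg$ (such as $\sigma$) that permutes the two Cartan involutions in question.
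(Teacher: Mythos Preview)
Your proposal is correct and follows essentially the same route as the paper: first produce a Cartan involution $\delta$ commuting with $\sigma$, then inner-conjugate $\theta$ to commute with this $\delta$. The paper's proof is even more terse---it simply cites Helgason for the existence of Cartan involutions commuting with $\sigma$ and with $\theta$ and for the inner conjugacy of any two Cartan involutions, whereas you unpack the underlying Mostow-type lemma explicitly (note a harmless sign slip: the conjugation that works is $\phi^{-1}\theta\phi$ rather than $\phi\theta\phi^{-1}$, as the ``direct computation'' you invoke would reveal).
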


\begin{proof}
There are Cartan involutions $\delta$ and $\delta'$ of $\lieg$ which commute with $\sigma$ and $\theta$, respectively. Moreover, $\delta$ and $\delta'$ differ by conjugation by an inner automorphism of $\lieg$ \cite[Ch.\ III, \S 7]{Helgason78}.  
\end{proof}

Let conjugation by $g\in G$ be denoted by $\conj(g)$.

\begin{corollary} For some $g\in G^0$ there
 is a Cartan involution of $G$ commuting with $\sigma$, $\conj(g)\circ\theta\circ\conj(g)\inv$ and the involution $\phi$ defining $G\subset U\c$.
\end{corollary}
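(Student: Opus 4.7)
The plan is to lift Proposition \ref{prop:commute.with.delta} to the group level and then obtain commutativity with $\phi$ essentially for free. First, apply Proposition \ref{prop:commute.with.delta} to obtain a Cartan involution $\delta$ of $\lieg$ commuting with $\sigma$ and with some $\theta'$ that differs from $\theta$ by an inner automorphism $\Int(\xi)$ of $\lieg$. (The proposition is stated for $\lieg$ semisimple; the general reductive case follows by handling the finite-order action of $\sigma,\theta$ on the connected center separately, which is harmless given the hypotheses imposed in the introduction.)

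Next, since $\Int(\lieg)$ is by definition the image of $\Ad\colon G^0\to\Aut(\lieg)$, we can choose $g\in G^0$ with $\Int(\xi)=\Ad(g)$. At the Lie algebra level this yields $\theta'=\Ad(g)\circ\theta\circ\Ad(g)\inv$, and since $\Lie(\conj(g))=\Ad(g)$ this lifts to $\theta'=\conj(g)\circ\theta\circ\conj(g)\inv$ on $G$. The Cartan involution $\delta$ of $\lieg$ integrates to an involutive automorphism of $G$, still denoted $\delta$, and its commutativity with $\sigma$ and with $\conj(g)\circ\theta\circ\conj(g)\inv$ propagates from the Lie algebra to $G$ via the standard $\Ad$--$\conj$ compatibility together with connectedness of $G^0$ (with a separate check on any component group, controlled by the prescribed action on the center).

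Finally, commutativity with $\phi$ is automatic from the construction. Extend $\delta$ complex-linearly from $\lieg$ to $\lieg_\C=\Lie(U\c)$; the resulting complex-linear involution $\delta_\C$ preserves the real form $\lieg=\lieg_\C^\phi$ and therefore commutes with the antilinear involution $\phi$ on $\lieg_\C$. Integrating, one obtains a holomorphic involution of $U\c$ commuting with $\phi$ and restricting to the desired Cartan involution of $G$; the corresponding commutativities with $\sigma$ and $\conj(g)\circ\theta\circ\conj(g)\inv$ transfer from $\lieg_\C$ to $U\c$ in the same way.

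I expect the only real obstacle to be bookkeeping: the reduction from reductive to semisimple (i.e. how $\delta$ is chosen on the connected center) and the behavior on the component group of $G$ when lifting involutions from Lie algebras to groups. None of this is conceptually deep, but it is the only place where the statement could plausibly fail, so it is where care must be taken.
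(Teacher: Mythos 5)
Your proof takes a genuinely different route from the paper at the crucial lifting step, and that is exactly where the gaps lie.

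The paper never tries to integrate the Lie-algebra involution $\delta$ directly. Instead it decomposes $U_\C=(U_\C,U_\C)Z(U_\C)^0$, uses the uniqueness of the maximal compact subgroup $T$ of $Z(U_\C)^0$ (so $T$ is automatically $\sigma$-, $\theta$- and $\phi$-stable), and uses the Lie-algebra $\delta$ on the semisimple part only to produce the connected compact subgroup $U'\subset (U_\C,U_\C)$ coming from the compact real form $\lieu'=\lieg_0'\oplus i\lier_0'$. Then $U'T$ is a $\sigma$-, $\theta$- and $\phi$-stable maximal compact of $U_\C$, and the Cartan involution of $U_\C$ associated to this maximal compact is a group automorphism by construction; by uniqueness of the Cartan involution with given fixed-point set, it must commute with any automorphism preserving $U'T$, in particular with $\sigma$, $\theta$ and $\phi$. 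No integration of a Lie-algebra involution is ever needed.

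Your route instead asserts that ``the Cartan involution $\delta$ of $\lieg$ integrates to an involutive automorphism of $G$,'' deferring to ``a separate check on any component group, controlled by the prescribed action on the center.'' But this is precisely the non-trivial content that the paper's argument is engineered to avoid: a Lie algebra automorphism of a non-simply-connected, possibly disconnected real group need not integrate, and when it does, commutativity relations proved at the Lie algebra level do not automatically propagate to components not in the image of $\exp$. You also compress the reduction from reductive to semisimple into a parenthetical, whereas the paper does this reduction explicitly and it is not entirely innocuous (one needs to know, as the paper observes, that the two factors of $U_\C=(U_\C,U_\C)Z(U_\C)^0$ are preserved by $\sigma$, $\theta$ and $\phi$, and that the center contributes a canonical $T$). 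Finally, the complex-linear extension $\delta_\C$ you build is a holomorphic involution of $U_\C$; the $\delta$ the paper actually uses (see the Remark following the corollary, which normalizes $(U_\C)^\delta=U$) is the antiholomorphic one. The two are related by composition with $\phi$, and all the required commutativities transfer, so this is not an error, but it should be stated explicitly since otherwise ``$(U_\C)^\delta=U$'' does not hold for your $\delta_\C$.

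In short: the overall strategy (apply Proposition \ref{prop:commute.with.delta}, then lift) matches the paper, and the clean observation that a real-linear involution of $\lieg$ extends complex-linearly and commutes with $\phi$ for free is a nice economy. But the actual lifting is where the work is, and your proposal asserts rather than proves it; replacing the ``integrate $\delta$'' step by the paper's ``pass to the stable maximal compact $U'T$ and take the associated Cartan involution'' is the fix.
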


\begin{proof} We may write $U\c=(U\c,U\c)Z(U\c)^0$ where $Z(U\c)$ is the center of $U\c$. The two components of the decomposition are preserved by $\sigma$, $\theta$ and $\phi$. Since $Z(U\c)^0$ has a unique maximal compact subgroup $T$, it is preserved by $\sigma$, $\theta$ and $\phi$. From Proposition  \ref{prop:commute.with.delta} we may assume that we have  a Cartan involution $\delta$ of $(\lieg,\lieg)$ which commutes with $\sigma$ and $\theta$. From $\delta$ we obtain a connected maximal compact subgroup $U'$ of $(U\c,U\c)$. Then $U'T$ is a maximal compact subgroup of $U\c$ which is $\sigma$, $\theta$ and $\phi$-stable, hence the corresponding Cartan involution of $G$ commutes with the other involutions.
\end{proof}

\begin{remark}
Changing $\theta$ to a conjugate automorphism as above only changes the double coset spaces we consider by an automorphism \cite[\S 1  Remark 2]{Matsuki97}. Thus from now on we assume that we have a Cartan involution $\delta$ commuting with $\sigma$, $\theta$ and $\phi$ and that $(U\c)^\delta=U$. It is not hard to show that, if $\sigma$ and $\theta$ commute, then one does not need to replace $\theta$ by a conjugate to find a suitable $\delta$.
\end{remark}

 In the following, let $G$, $H$ and $K$ be as in the introduction, where $K=G^\theta$. From $\theta$ we have the eigenspace decomposition $\lieg=\liek\oplus\liep$. Similarly, for $\sigma$ we have $\lieg=\lieh\oplus\lieq$.
  Let $P:=\{x\in G\mid \theta(x)=x\inv\}$. Then $G$ acts on $P$ via $*$; $g$, $x\mapsto g*x:=gx\theta(g\inv)$. We set $\beta(g)=g*e$ for $g\in G$ and we denote $\beta(G)$ by  $P_\theta(G)$ or $X$. Then $\beta$ induces a bijection of $G/K$ onto $X$. We will now see that $X$ is smooth, and it follows easily that $\beta$ is a diffeomorphism. Under the diffeomorphism $\beta$, the left action of $H$ on $G/K$ becomes the $*$-action on $X$.
  
\begin{lemma} \label{lem:Popen}
Every $G$-orbit in $P$ is  open.
\end{lemma}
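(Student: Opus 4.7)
\emph{Proof plan.} The plan is to show for each $x \in P$ that the orbit map $\Psi_x\colon G \to G$, $g \mapsto g*x = gx\theta(g)\inv$, is locally surjective onto $P$ at $e$; equivariance of the $*$-action then propagates this to local openness at every point of the orbit $G*x$. The engine would be to identify $P$ locally near $x$ as a smooth submanifold whose tangent space matches the image of $d\Psi_x|_e$. I would start by introducing the linear map $\theta_x := \Ad(x)\circ\theta$ on $\lieg$: because $x\theta(x) = e$, one has
\[
\theta_x^2 = \Ad(x)\,\theta\,\Ad(x)\,\theta = \Ad(x\theta(x))\,\theta^2 = \id,
\]
so $\theta_x$ is an involution with eigenspace decomposition $\lieg = \lieg^+ \oplus \lieg^-$ into $\pm 1$ eigenspaces. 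Differentiating $\Psi_x$ at $e$ in direction $Y$ gives $Yx - x\theta(Y)$, which in the right-trivialization of $T_xG$ corresponds to $Y \mapsto (I - \theta_x)(Y)$; its image is $\lieg^-\cdot x$.

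To identify $P$ locally I would parameterize a neighborhood of $x$ in $G$ by $Z \mapsto \exp(Z)\,x$. Using $\theta(x) = x\inv$, the condition $\exp(Z)x \in P$ becomes $\exp(Z)\exp(\theta_x(Z)) = e$. For $Z \in \lieg^-$, $\theta_x(Z) = -Z$, so the condition holds tautologically, yielding $\exp(\lieg^-)\,x \subset P$. For the converse I would set $F(Z) := \log\bigl(\exp(Z)\exp(\theta_x(Z))\bigr)$, a smooth $\lieg$-valued function defined near $0$. The Baker--Campbell--Hausdorff formula gives
\[
F(Z) = Z + \theta_x(Z) + O(|Z|^2) = 2Z_+ + O(|Z|^2),
\]
where $Z = Z_+ + Z_-$ is the $\theta_x$-decomposition. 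Composing with the projection $\pi_+\colon \lieg \to \lieg^+$, the map $\pi_+\circ F$ is therefore submersive at $0$, so $(\pi_+\circ F)\inv(0)$ is locally a smooth submanifold of $\lieg$ of dimension $\dim\lieg^-$. Since $\lieg^- \subset F\inv(0) \subset (\pi_+\circ F)\inv(0)$ and $\lieg^-$ already has dimension $\dim\lieg^-$, the inclusions are equalities locally, and in particular $F\inv(0) = \lieg^-$ locally. Thus $P = \exp(\lieg^-)\,x$ locally near $x$, is smooth there, and $T_xP = \lieg^-\cdot x$.

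Comparing with the image of $d\Psi_x|_e$, the two subspaces coincide, so $\Psi_x$ will be a submersion onto $P$ at $e$; hence $G*x$ contains a neighborhood of $x$ in $P$, and equivariance of the $*$-action yields openness at every point of the orbit. The principal hurdle will be pinning $P$ down as a smooth submanifold of the predicted dimension at $x$: the naive defining map $g \mapsto g\theta(g)$ does not obviously have constant rank near $x$, and my workaround exploits the tautological inclusion $\exp(\lieg^-)\,x \subset P$ together with the partial submersion $\pi_+\circ F$ to fix the local model of $P$ by a dimension count.
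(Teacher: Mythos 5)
Your proof is correct, and it shares the essential ingredient with the paper's: recognizing that the local model of $P$ near $x$ is $\exp(\lieg^-)x$, where $\lieg^- $ is the $(-1)$-eigenspace of $\theta_x := \Ad(x)\circ\theta$. The route differs in the second half. The paper, after noting that a nearby point of $P$ has the form $\exp(v)x$ with $\theta_x(v)=-v$, simply \emph{exhibits} the group element that hits it: for $w=v/2$ one has $\exp(w)*x = \exp(w)\exp((\Ad x)(-\theta(w)))x = \exp(v)x$, since $(\Ad x)(-\theta(w)) = -\theta_x(w) = w$. This is constructive, avoids any appeal to submersion or rank theorems, and is very short. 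You instead establish the local submanifold structure of $P$ by a BCH/dimension-count argument and then invoke the fact that a smooth map into a submanifold with surjective differential is locally open onto it. Both are valid; the paper's direct computation is leaner. Two small remarks on economizing your own route: (i) the BCH expansion and the auxiliary map $\pi_+\circ F$ are unnecessary, since for $Z$ small the relation $\exp(Z)\exp(\theta_x(Z))=e$ is equivalent to $\exp(\theta_x(Z))=\exp(-Z)$, which by local injectivity of $\exp$ gives $\theta_x(Z)=-Z$ directly; (ii) once you know $P$ is locally $\exp(\lieg^-)x$, you could finish with the paper's explicit $w=v/2$ trick rather than the submersion theorem, making the whole argument self-contained at the level of elementary Lie-group identities.
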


\begin{proof} Let $x\in P$ and consider  the open set of points   $\exp(v)x$ where $v$ lies in a small neighborhood of $0\in\lieg$. Then $\exp(v)x$ lies in $P$ if and only if $\exp(\theta(v))\theta(x)=x\inv\exp(-v)$ which is equivalent to the condition that $\theta(v)=-(\Ad x\inv)\cdot v$. Now let $w=v/2$. Then 
$$
\exp(w)x\exp(-\theta(w))=\exp(w)\exp((\Ad x)(-\theta(w)))x=\exp(v/2)\exp(v/2)x=\exp(v)x.
$$ 
Thus $G*x$ contains a neighborhood of $x$ in $P$.
\end{proof}

\begin{corollary}  Every $G$-orbit in $P$ is closed and is a smooth submanifold. In particular,  $X$ is a closed submanifold of   $G$.
\end{corollary}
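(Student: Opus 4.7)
The plan is to apply Lemma \ref{lem:Popen} together with basic point-set and Lie-theoretic facts. First, $P$ is closed in $G$, being the preimage of the identity under the continuous map $y\mapsto\theta(y)y\colon G\to G$. By Lemma \ref{lem:Popen}, each $G$-orbit in $P$ is open in $P$. Since $P$ is a disjoint union of its $G$-orbits, the complement of any given orbit is a union of the remaining (open) orbits, hence open in $P$. Thus every orbit is both open and closed in $P$, and being closed in the closed subset $P\subset G$, every orbit is closed in $G$.

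For the smooth structure, I would identify $G*x$ with the homogeneous space $G/G_x$ (where $G_x$ is the stabilizer under $*$) via the orbit map $gG_x\mapsto g*x$, which is a smooth injective immersion. Since $G*x$ is open in the closed set $P$, it is locally closed in $G$. The standard fact that a locally closed injectively-immersed submanifold is an embedded submanifold then gives that each orbit is an embedded smooth submanifold of $G$. Taking $x=e$ yields $X=\beta(G)=G*e$ as a closed embedded smooth submanifold of $G$, which is the final claim.

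The serious work has already been done in Lemma \ref{lem:Popen}, so no real obstacle is expected. The only input beyond that lemma is the standard Lie-theoretic fact that an injective immersion $G/G_x\to G$ whose image is locally closed is automatically an embedding; this could be verified on the spot via the local slice argument already implicit in the proof of Lemma \ref{lem:Popen} (the tangent space to $G*x$ at $x$ and the solution space of $\theta(v)=-\Ad(x^{-1})v$ coincide and integrate to an open neighborhood in the orbit), but it can equally well be cited.
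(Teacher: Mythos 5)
Your argument is correct and is exactly the standard deduction the paper leaves to the reader (the corollary is stated without proof): $P$ is closed as the preimage of $e$ under $y\mapsto\theta(y)y$, the orbits partition $P$ into open sets and hence are also closed, and the local chart $v\mapsto\exp(v)x$ on $\{v:\theta(v)=-\Ad(x^{-1})v\}$ extracted from the proof of Lemma \ref{lem:Popen} already exhibits each orbit as an embedded submanifold. Citing the general ``locally closed injective immersion is an embedding'' fact is fine but, as you note, unnecessary here since the slice argument in Lemma \ref{lem:Popen} does the work directly.
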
 

From our Cartan involution $\delta$ we have the Cartan decomposition $G=G_0\exp\lie r_0$ where $G_0=G\cap U$  and $\lieg = \lieg_0\oplus\lier_0$ is the Cartan decomposition of $\lieg$. The involutions $\sigma$ and $\theta$ preserve  $G_0$, $\lie g_0$ and $\lier_0$.  Set $X_0=P_\theta(G_0)$.
\begin{proposition}
Let $G_0$, etc.\ be as above. Then $X_0=X\cap U$.
\end{proposition}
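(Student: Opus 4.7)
The inclusion $X_0\subseteq X\cap U$ is immediate: $X_0=\beta(G_0)\subseteq\beta(G)=X$, and $\beta(G_0)\subseteq G_0\subseteq U$ since $\theta$ preserves $G_0$. The substantive direction is $X\cap U\subseteq X_0$, and the plan is to identify both sides as fixed sets of the Cartan involution $\delta$.

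First I would verify that $\delta$ preserves $X$. Because $\delta$ commutes with $\theta$, one has $\delta(\beta(g))=\delta(g)\theta(\delta(g))^{-1}=\beta(\delta(g))$, so $\delta(X)=\beta(\delta(G))=X$. Next, in the Cartan decomposition $G=G_0\exp(\lier_0)$ the fixed-point set $G^\delta$ equals $G_0$, and since $G_0=G\cap U$ we obtain $X\cap U=X^\delta$. It therefore suffices to show that every $\delta$-fixed point of $X$ lies in $\beta(G_0)$.

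Let $x\in X^\delta$ and write $x=g\theta(g)^{-1}$ with $g\in G$. Cartan-decompose $g=g_0\exp(v)$ with $g_0\in G_0$ and $v\in\lier_0$; since $\theta$ preserves $\lier_0$,
$$x=g_0\,\exp(v)\exp(-\theta(v))\,\theta(g_0)^{-1}.$$
Because $g_0,\theta(g_0),x\in G^\delta$, the middle factor $m:=\exp(v)\exp(-\theta(v))$ is $\delta$-fixed. Since $\delta(m)=\exp(-v)\exp(\theta(v))$, this yields
$$\exp(v)\exp(-\theta(v))=\exp(-v)\exp(\theta(v)).$$
Multiplying on the left by $\exp(v)$ and using $\exp(v)\exp(-v)=e$ collapses the right-hand side to $\exp(\theta(v))$, giving $\exp(2v)\exp(-\theta(v))=\exp(\theta(v))$, hence $\exp(2v)=\exp(2\theta(v))$. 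Injectivity of $\exp$ on $\lier_0$ forces $v=\theta(v)$, so $m=e$ and $x=\beta(g_0)\in X_0$.

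The one non-formal step is the last displayed identity: the factors on neither side commute in general, but left-multiplying by $\exp(v)$ engineers the cancellation $\exp(v)\exp(-v)=e$ on the right and collapses the left-hand side to a product of commuting copies of $\exp(v)$, which then reduces everything to the injectivity of $\exp\colon\lier_0\to\exp(\lier_0)$. All other manipulations are formal consequences of $\delta\theta=\theta\delta$ and $G^\delta=G_0$.
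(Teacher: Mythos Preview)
Your proof is correct and follows essentially the same approach as the paper: both reduce to the middle factor $m=\exp(v)\exp(-\theta(v))$ lying in $G_0$ and then invoke uniqueness in the Cartan decomposition. The paper's endgame is marginally slicker---after observing $m\in G_0$ it writes $m\exp(\theta(v))=\exp(v)$ and reads off $m=e$ and $\theta(v)=v$ directly from uniqueness of $G=G_0\exp(\lier_0)$, avoiding your left-multiplication trick---but the substance is identical.
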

\begin{proof}
Suppose that $x\in X\cap U$,  $x=\beta(u\exp(v))$ where $u\in G_0$ and $v\in\lie r_0$.  Then $x=u *\beta(\exp(v))$. Thus $u\inv*x=\beta(\exp(v))$, so that we may assume  that $x=\exp(v)\exp(-\theta(v))$ for $v\in\lie r_0$. But then $x\exp(\theta(v))=\exp(v)$ and uniqueness in the Cartan decomposition forces $x=e$. Thus $x\in X_0$.
\end{proof}

The proof of Lemma \ref{lem:Popen} also gives
\begin{lemma}\label{theta:lemma}
Suppose that $x:=u\exp(v)\in X$ where $u\in G_0$ and $v\in\lie r_0$. Then $\theta(u)=u\inv$ and $\theta(v)=-(\Ad u)v$. 
\end{lemma}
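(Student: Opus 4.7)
The plan is to imitate closely the calculation in the proof of Lemma \ref{lem:Popen}, but now exploit uniqueness in the Cartan decomposition $G=G_0\exp(\lier_0)$ to separate the group and Lie algebra pieces.

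First I would record the basic fact that, since $x\in X\subset P$, the defining condition is $\theta(x)=x\inv$. Plugging in $x=u\exp(v)$ and using that $\theta$ is an involutive automorphism of $G$, this reads
\[
\theta(u)\exp(\theta(v))=\exp(-v)\,u\inv.
\]
Next I would rewrite the right-hand side in Cartan decomposition form. Since $u\inv\in G_0$ and $\Ad u$ preserves $\lier_0$ (because $G_0\subset U$ commutes with the Cartan involution $\delta$, so $\Ad G_0$ stabilizes both $\lieg_0$ and $\lier_0$), we have
\[
\exp(-v)\,u\inv=u\inv\exp\bigl(-(\Ad u)v\bigr),
\]
with $u\inv\in G_0$ and $-(\Ad u)v\in\lier_0$.

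The left-hand side is likewise already in Cartan decomposition form: $\theta(u)\in G_0$ because $\theta$ preserves $G_0$, and $\theta(v)\in\lier_0$ because $\theta$ preserves $\lier_0$ (both follow from $\theta$ commuting with $\delta$). Therefore, by the uniqueness of the decomposition $g=g_0\exp(w)$ with $g_0\in G_0$ and $w\in\lier_0$, together with the injectivity of $\exp$ on $\lier_0$, I can equate the two factors separately to conclude
\[
\theta(u)=u\inv,\qquad \theta(v)=-(\Ad u)v,
\]
which is the claim.

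The only delicate point is verifying that both sides are genuinely in Cartan-decomposition form, i.e.\ that $\theta$ and $\Ad u$ each preserve the splitting $\lieg=\lieg_0\oplus\lier_0$. This is not really an obstacle though: $\theta$ commutes with $\delta$ by our standing assumption, so it stabilizes the $\pm1$-eigenspaces of $\delta$, and $G_0\subset U$ is fixed pointwise by $\delta$ (up to the appropriate convention) so $\Ad(G_0)$ commutes with $\ad$ on the eigenspaces. Once these two stabilities are in hand, the uniqueness argument concludes the proof in one line.
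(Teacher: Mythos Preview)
Your proof is correct and is essentially the same argument the paper has in mind: the paper simply says ``The proof of Lemma~\ref{lem:Popen} also gives'' this lemma, and your write-up is a faithful unpacking of that remark---you redo the computation $\theta(x)=x^{-1}$ as in Lemma~\ref{lem:Popen} and then invoke uniqueness of the Cartan decomposition to split off the $G_0$ and $\exp(\lier_0)$ factors. The only point you add beyond the paper's one-line reference is the explicit check that $\theta$ and $\Ad(G_0)$ preserve the splitting $\lieg=\lieg_0\oplus\lier_0$, which is indeed needed and follows immediately from $\theta\delta=\delta\theta$.
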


\begin{corollary} \label{cor:projection}
There is a $G_0$-equivariant projection $\lambda\colon X\to X_0$, $u\exp(v)\mapsto u$, where $G_0$ is acting via $*$.
\end{corollary}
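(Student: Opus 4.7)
The plan is to use the global Cartan decomposition $G=G_0\exp(\lier_0)$ to \emph{define} $\lambda$, and then to check that its image lies in $X_0$ and that it intertwines the $*$-actions of $G_0$. The Cartan decomposition gives a diffeomorphism $G_0\times\lier_0\to G$, $(u,v)\mapsto u\exp(v)$. For $x\in X\subset G$, write uniquely $x=u\exp(v)$ and set $\lambda(x):=u$. Smoothness is immediate, since $\lambda$ is the composition of the inverse of this diffeomorphism, restricted to $X$, with projection onto the first factor.

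The main point is to show $\lambda(x)=u\in X_0$, not merely in $G_0$. By Lemma \ref{theta:lemma} we have $\theta(u)=u\inv$, so $u\in P$. I would connect $u$ to $x$ inside $P$ by the explicit path $\gamma(t):=u\exp(tv)$, $t\in[0,1]$. A direct calculation using $\theta(v)=-(\Ad u)v$ together with $\exp(-t\Ad(u)v)=u\exp(-tv)u\inv$ gives
\[
\theta(\gamma(t))=u\inv\exp(-t\Ad(u)v)=u\inv\cdot u\exp(-tv)u\inv=\exp(-tv)u\inv=\gamma(t)\inv,
\]
so $\gamma([0,1])\subset P$. By Lemma \ref{lem:Popen} and its corollary, the $G$-orbit of $x$ in $P$ is both open and closed; since it contains the connected set $\gamma([0,1])$, it must contain $u$ as well. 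Hence $u\in G*x\subset X$. Combining $u\in G_0\subset U$ with the previous proposition $X\cap U=X_0$ yields $u\in X_0$.

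For $G_0$-equivariance, let $g\in G_0$ and $x=u\exp(v)$. Since $\theta$ preserves $G_0$, we have $\theta(g)\in G_0$; and since $\Ad(G_0)$ preserves the Cartan decomposition of $\lieg$, we have $\Ad(\theta(g))v\in\lier_0$. Therefore
\[
g*x = gu\exp(v)\theta(g\inv) = \bigl(gu\theta(g\inv)\bigr)\exp\bigl(\Ad(\theta(g))v\bigr)
\]
exhibits $g*x$ in Cartan form, with first factor in $G_0$ and second factor in $\exp(\lier_0)$. Uniqueness of the Cartan decomposition then yields $\lambda(g*x)=gu\theta(g\inv)=g*\lambda(x)$, as required.

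The only step demanding genuine care is the verification that $u\in X$ rather than merely $u\in P\cap G_0$; everything else is formal manipulation of the Cartan decomposition. The clopen/connectedness argument above is exactly what handles this subtlety, and it depends only on facts already established, namely Lemma \ref{lem:Popen} and the preceding proposition identifying $X\cap U$ with $X_0$.
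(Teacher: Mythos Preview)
Your proof is correct. The equivariance computation is identical to the paper's, but your argument that $u\in X_0$ differs from the paper's in a meaningful way.

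The paper invokes the standing assumption $G=HG^0K$ to reduce to the case that $x$ lies in the identity component of $X$; it then projects a path from $x$ to $e$ down to $G_0$ (using the global Cartan decomposition) to obtain a path in $P\cap U$ from $u$ to $e$, and concludes $u\in X_0$ because $X_0$ is the identity component of $P\cap U$. Your argument instead builds the explicit path $\gamma(t)=u\exp(tv)$ inside $P$ joining $u$ to $x$, and uses the clopenness of $G$-orbits in $P$ to get $u\in G*x=X$, after which the identity $X\cap U=X_0$ finishes the job. Your route is slightly more self-contained: it does not use the connectedness assumption \eqref{eqn:assumption}, and the path is written down explicitly rather than obtained by projecting an unspecified path in $X$. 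The paper's route, on the other hand, keeps everything inside the compact group $U$ and makes direct contact with the component description of $X_0\subset P\cap U$.
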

\begin{proof}
Let $x=u\exp(v)\in X$ as above.   Then $u\in P$. By assumption \eqref{eqn:assumption}, $H_0$ acts transitively on the components  of $X$, so we may assume that there is a path from $x$ to $e$ in $X$, so that there is a path in $P\cap U$ from $u$ to $e$. Since $X_0$ contains the component of $P\cap U$ containing $e$ we have  $u\in X_0$. If $u'\in G_0$, then $u'*x=(u'*u)\exp(\Ad \theta(u')v)\in X$ where $\Ad\theta(u')v\in\lie r_0$. Thus $\lambda$ is equivariant.
\end{proof} 

\begin{remark}\label{rem:anotherXtoX0}
There is another way to view $\lambda\colon X\to X_0$. From \cite[Theorem 9.3]{HeinznerSchwarz} there is an isomorphism $ G_0\times^{K_0}(\liep\cap\lier_0) \simeq G/K$, $[u,\xi]\mapsto u\exp(\xi)$, where $K_0=K\cap U$. This gives us a $G_0$-equivariant map $G/K\to G_0/K_0$. Applying $\beta$ one gets the mapping $\lambda$.\end{remark}
\subsection{Conditions on $\sigma$ and $\theta$}
 
 Our assumption on $\sigma$ and $\theta$ is that they generate a finite group of automorphisms of $Z(U)^0$. This assumption is related to that   of Matsuki \cite[\S 2]{Matsuki97}:
 
 \begin{proposition} \label{prop:sigmatheta}
 The following are equivalent.
 \begin{enumerate}
\item The subgroup of $\Aut(Z(U)^0)$ generated by $\sigma$ and $\theta$ is finite.
\item The image of $\sigma\theta$ in $\Aut(Z(U)^0)\simeq \GL(n,\Z)$ is semisimple with eigenvalues of norm 1.
\end{enumerate}
 \end{proposition}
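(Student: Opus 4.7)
My plan is to exploit the fact that $\sigma$ and $\theta$ are involutions, so $\langle\sigma,\theta\rangle \subset \Aut(Z(U)^0)$ is a dihedral group. A dihedral group generated by two involutions is finite precisely when the product of the two generators has finite order. Hence both statements (1) and (2) will be reduced to statements about the single element $\sigma\theta \in \GL(n,\Z)$, where $n = \dim Z(U)^0$.

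For (1) $\Rightarrow$ (2), I would argue as follows: if $\sigma\theta$ has finite order $N$, then it satisfies the polynomial $x^N - 1$, which has distinct roots. Therefore $\sigma\theta$ is diagonalizable (semisimple) over $\C$, and its eigenvalues are $N$-th roots of unity, which of course have norm 1.

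For (2) $\Rightarrow$ (1), the key input is Kronecker's theorem on algebraic integers. Since $\sigma\theta \in \GL(n,\Z)$, its characteristic polynomial has integer coefficients, so the eigenvalues are algebraic integers and the set of eigenvalues is closed under Galois conjugation over $\mathbb{Q}$. By hypothesis every eigenvalue has norm $1$, so in fact every Galois conjugate of every eigenvalue has norm $1$. Kronecker's theorem then says each eigenvalue is a root of unity. Combined with semisimplicity, this shows that over $\C$ the element $\sigma\theta$ is conjugate to a matrix lying in a compact group (a diagonal unitary matrix), so the operator norms of all powers $(\sigma\theta)^k$ are uniformly bounded. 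But $\GL(n,\Z)$ is discrete in $\GL(n,\R)$, and a bounded subset of a discrete set is finite; hence the cyclic group generated by $\sigma\theta$ is finite, completing (2) $\Rightarrow$ (1).

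The only subtlety I anticipate is the invocation of Kronecker: one needs that the full Galois orbit of each eigenvalue consists of eigenvalues, which is automatic once one recalls that the characteristic polynomial is in $\Z[x]$. The rest is a routine use of the fact that $\GL(n,\Z)$ is a discrete subgroup of $\GL(n,\R)$, intersecting the (relatively compact) closure of the orbit in finitely many points.
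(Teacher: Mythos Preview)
Your proposal is correct and follows essentially the same route as the paper: both directions hinge on Kronecker's theorem (algebraic integers all of whose conjugates lie on the unit circle are roots of unity), together with the dihedral observation that $\langle\sigma,\theta\rangle$ is finite iff $\sigma\theta$ has finite order. The only cosmetic difference is that once the eigenvalues are known to be roots of unity and $\sigma\theta$ is semisimple, the paper concludes finite order directly (the minimal polynomial divides $x^N-1$ for $N$ the lcm of the orders), whereas you pass through the bounded-in-a-discrete-group argument; both are fine.
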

 
 \begin{proof}
 Clearly (1) implies (2). Now assume (2). The image  of $\sigma\theta$ in $\Aut(Z(U)^0)\simeq\GL(n,\Z)$ has eigenvalues that satisfy a monic polynomial equation with integer coefficients. Since the eigenvalues have norm 1, they must be roots of unity. Since $\sigma\theta$ is semisimple, it follows that $\sigma\theta$ has finite order. Thus   $\sigma$ and $\theta$ generate a finite subgroup of $\Aut(Z(U)^0)$.
 \end{proof}

\begin{lemma}\label{lem:innerproduct}
There is an inner product $\langle\ ,\,\rangle$ on $\lieu$ which is $U$, $\sigma$ and $\theta$-invariant.
\end{lemma}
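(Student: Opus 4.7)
The plan is to exploit the reductive decomposition $\lieu = [\lieu,\lieu] \oplus \lie z(\lieu)$ and build the desired inner product separately on the two summands. This decomposition is canonical, hence preserved by every automorphism of $\lieu$; in particular it is stable under $\Ad(U)$, $\sigma$ and $\theta$, and making each summand orthogonal to the other gives an orthogonal decomposition respected by all three.

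On the semisimple part $[\lieu,\lieu]$ the obvious candidate is the negative of the Killing form $B$. Since $\lieu$ is the Lie algebra of a compact group, $B$ is negative definite on $[\lieu,\lieu]$, and because $B$ is intrinsic to the Lie bracket it is invariant under every Lie algebra automorphism; in particular it is invariant under $U$ (via the adjoint action), $\sigma$ and $\theta$. Thus $-B|_{[\lieu,\lieu]\times[\lieu,\lieu]}$ gives the required inner product on this summand with no further work.

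On the central part $\lie z(\lieu) = \Lie(Z(U)^0)$ the adjoint action of $U$ is trivial, so $U$-invariance is automatic, and only $\sigma$- and $\theta$-invariance remain. Here I would invoke the standing hypothesis that $\sigma$ and $\theta$ generate a finite subgroup $\Gamma$ of $\Aut(Z(U)^0)$; differentiating, $\Gamma$ acts as a finite group of linear automorphisms of $\lie z(\lieu)$. Picking any inner product on $\lie z(\lieu)$ and averaging over $\Gamma$ produces a $\Gamma$-invariant, hence $\sigma$- and $\theta$-invariant, inner product on the center.

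The inner product on $\lieu$ is then defined by declaring the two summands orthogonal and using $-B$ on $[\lieu,\lieu]$ and the averaged form on $\lie z(\lieu)$. No step appears delicate: the only place the hypotheses of the paper enter is in ensuring that $\langle\ ,\,\rangle_\Gamma$ on the center is a finite average, and this is precisely why the finiteness assumption on the group generated by $\sigma,\theta$ in $\Aut(Z(U)^0)$ was imposed. If one tried the argument without that hypothesis, averaging on the center would fail and this is the only real obstacle.
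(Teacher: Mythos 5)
Your proof is correct and uses the same basic decomposition $\lieu=[\lieu,\lieu]\oplus\lie z(\lieu)$ as the paper, and handles the central part by exactly the same averaging argument. The only real difference is on the semisimple summand: the paper notes that $\Aut(\lieu_s)$ is a finite extension of $\Int(\lieu_s)$, so that $\sigma$, $\theta$ and $\Int(U_s)$ together generate a \emph{compact} subgroup of $\Aut(\lieu_s)$ and one can average an arbitrary inner product; you instead observe that $-B$ (the negative of the Killing form) is positive definite on the compact semisimple algebra and is automatically invariant under every Lie algebra automorphism, so no averaging is needed there. Your version is marginally more economical on the semisimple part since it bypasses the structural fact about $\Aut(\lieu_s)$, while the paper's averaging argument is more uniform in spirit with its treatment of the center. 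Both are complete proofs; there is no gap.
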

\begin{proof}
Write $U=U_sZ(U)^0$ where $U_s=(U,U)$ is the semisimple part of $U$. Since $\Aut(U_s)$ is a finite extension of the inner automorphisms $\Int U_s$ of $U_s$, we find that $\sigma$, $\theta$ and $U_s$ generate a compact group of automorphisms of $\lieu_s$, so that we can find an inner product invariant under this compact group. Since $\sigma$ and $\theta$ generate a finite group of automorphisms of $\lie z(\lieu)$, it  has an invariant inner product. Putting the two inner products together gives the desired result.
\end{proof}

 \subsection{Moment mapping and Kempf-Ness set}\label{sec:moment}

We have the Cartan decomposition $U\c\simeq U\times i\lieu\simeq U\times\exp(i\lieu)$. Let $\langle\ ,\,\rangle$ be an inner product on $\lieu$ which is $U$, $\theta$ and $\sigma$-invariant.  We also consider $\langle\ ,\,\rangle$ to be an inner product  on $\lieu^*$.  The function $\rho\colon U\c\to \R$, $\rho(u\exp(i\eta)):=(1/2)\langle\eta,\eta\rangle$, $u\in U$, $\eta\in\lieu$, is a strictly plurisubharmonic exhaustion of $U\c$ (see  \cite[\S 9]{HeinznerSchwarz}) and it is invariant under left and right multiplication by elements of $U$. Associated to $\rho$ we have a K\"ahler form $\omega=2i\partial\bar\partial\rho$ and a moment mapping $\nu\colon U\c\to\lieu^*$. If  $g\in U\c$ and $\xi\in\lieu$,  then $\nu(g)(\xi)=\frac{d}{dt}|_{t=0}\rho(g\exp -it\xi)$.  
Note that $\nu$ is $U$-equivariant, where  $U$ is acting by right multiplication on $U\c$ and  
$u\in U$ acts on $\lieu^*$ by $\alpha\mapsto\alpha\circ\Ad(u\inv)$. Let $\nu^\xi$ denote $\nu$ followed by evaluation at $\xi\in\lieu$.
A computation \cite[Lemma 9.1]{HeinznerSchwarz} shows that $\nu$ has the following simple form: 

\begin{equation}\label{eqn:moment}
\nu^\xi(u\exp(i\eta))=\langle\xi,\eta\rangle,\    u\in U; \ \xi,\  \eta\in \lieu. 
\end{equation}

Thus $\nu$ is, essentially, projection of $U\c\simeq U\times i\lieu\simeq U\times  \lieu^*$ onto the second factor. 
Now we consider the action of $U$ on $U\c$ given by $u*g=ug\theta(u)\inv$. Note that $\rho$ is still $U$-invariant.

\begin{lemma}\label{mulemma}
Let $U$ act on $U\c$ by $*$ and let  $\mu\colon U\c\to \lieu^*$ denote the corresponding moment mapping; $\mu^\xi(g)= \frac{d}{dt}|_{t=0}\rho( \exp(it\xi)g\exp( -it\theta(\xi))$. Then
$$
\mu^\xi(u\exp(i\eta))=\langle  \eta,\theta(\xi)-\Ad(u\inv)(\xi)\rangle,\ u\in U; \ \eta, \ \xi\in\lieu. 
$$
\end{lemma}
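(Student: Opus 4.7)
The plan is to split the time derivative into its ``left'' and ``right'' contributions, since the $*$-action is the composition of left multiplication by $\exp(it\xi)$ with right multiplication by $\exp(-it\theta(\xi))$, and then to reduce each piece to a single application of formula \eqref{eqn:moment}. Setting $F(s,t):=\rho(\exp(is\xi)g\exp(-it\theta(\xi)))$, the chain rule gives $\mu^\xi(g)=\frac{d}{dt}|_{t=0}F(t,t)=\partial_sF(0,0)+\partial_tF(0,0)$. The summand $\partial_tF(0,0)$ is, by definition, exactly $\nu^{\theta(\xi)}(g)$, which \eqref{eqn:moment} evaluates at once as $\langle\theta(\xi),\eta\rangle$.

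For $\partial_sF(0,0)=\frac{d}{ds}|_{s=0}\rho(\exp(is\xi)u\exp(i\eta))$, I would first conjugate $\exp(is\xi)$ past $u$ via the identity $\exp(is\xi)u=u\exp(is\Ad(u\inv)\xi)$ and then strip off the leading $u$ using the left $U$-invariance of $\rho$. Writing $\xi':=\Ad(u\inv)\xi$, this reduces $\partial_sF(0,0)$ to $\frac{d}{ds}|_{s=0}\rho(\exp(is\xi')\exp(i\eta))$. To bring this expression into the shape handled by $\nu$, I would swap the two factors using the symmetry $\rho(h)=\rho(\delta(h)\inv)$, which is immediate from the explicit polar formula for $\rho$ together with the $U$-invariance of $\langle\,,\,\rangle$: for $h=u\exp(i\eta)$ one finds $\delta(h)\inv=u\inv\exp(i\Ad(u)\eta)$, and $|\Ad(u)\eta|=|\eta|$. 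Since $\delta(\exp(is\xi')\exp(i\eta))\inv=\exp(i\eta)\exp(is\xi')$, this identity gives $\rho(\exp(is\xi')\exp(i\eta))=\rho(\exp(i\eta)\exp(is\xi'))$, and a second invocation of \eqref{eqn:moment} then yields $\frac{d}{ds}|_{s=0}\rho(\exp(i\eta)\exp(is\xi'))=\nu^{-\xi'}(\exp(i\eta))=-\langle\Ad(u\inv)\xi,\eta\rangle$.

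Adding the two contributions produces $\mu^\xi(u\exp(i\eta))=\langle\theta(\xi),\eta\rangle-\langle\Ad(u\inv)\xi,\eta\rangle=\langle\eta,\theta(\xi)-\Ad(u\inv)\xi\rangle$, which is the claimed formula. The argument is conceptually routine — essentially two applications of \eqref{eqn:moment} glued together by the left-right symmetry of $\rho$ — and the only real pitfall I anticipate is careful sign bookkeeping, in particular remembering that $\nu$ is defined in terms of $\exp(-it\zeta)$ so that differentiating $s\mapsto\rho(h\exp(is\xi'))$ returns $-\nu^{\xi'}(h)$ rather than $+\nu^{\xi'}(h)$.
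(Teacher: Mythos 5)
Your argument is correct: the chain-rule split into left and right partials matches the paper's product-rule split, the right partial reduces to $\nu^{\theta(\xi)}(g)$ by \eqref{eqn:moment} exactly as in the paper, and the conjugation $\exp(is\xi)u = u\exp(is\Ad(u\inv)\xi)$ combined with left $U$-invariance of $\rho$ is the same first move on the left partial. Where you diverge is in the remaining step of computing $\frac{d}{ds}\big|_{s=0}\rho\bigl(\exp(is\xi')\exp(i\eta)\bigr)$. You isolate the symmetry $\rho(h)=\rho(\delta(h)\inv)$ (an immediate consequence of the polar formula and $U$-invariance of $\langle\,,\,\rangle$), apply it with $h=\exp(is\xi')\exp(i\eta)$ to interchange the two exponentials, and then invoke \eqref{eqn:moment} a second time; the sign bookkeeping, $\frac{d}{ds}\big|_{s=0}\rho(h\exp(is\xi'))=\nu^{-\xi'}(h)$, is handled correctly. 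The paper reaches the same value by writing the polar decomposition $\exp(i\eta)\exp(-it\zeta)=u(t)\exp(i\gamma(t))$, extracting $\frac{d}{dt}\big|_{t=0}\tfrac12\langle\gamma(t),\gamma(t)\rangle=\langle\zeta,\eta\rangle$ from \eqref{eqn:moment}, then taking inverses to identify $\rho(\exp(it\zeta)\exp(-i\eta))$ with $\tfrac12\langle\gamma(t),\gamma(t)\rangle$ (which implicitly uses the symmetry $\rho(g\inv)=\rho(g)$) and finally sending $\eta\mapsto-\eta$. Both routes hinge on a $\rho$-symmetry that swaps factors; yours packages it as a clean, reusable identity $\rho\circ(\,\cdot\,)\inv\circ\delta=\rho$ and applies \eqref{eqn:moment} once more without introducing the auxiliary curves $u(t),\gamma(t)$, which is marginally tidier, whereas the paper's version keeps all manipulations inside the polar coordinates already in play. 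Either way the computation is sound.
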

\begin{proof}
The product rule shows that 
$$
\mu^\xi(u\exp(i\eta))=\frac{d}{dt}|_{t=0}\rho(\exp(it\xi)u\exp(i\eta))+\frac{d}{dt}|_{t=0}\rho( u\exp(i\eta)\exp(-it\theta(\xi)))
$$
where by \eqref{eqn:moment} the second term is $\langle \eta,\theta(\xi)\rangle$. Now 
$$
\rho(\exp(it\xi) u\exp(i\eta))= \rho(u \exp(it\Ad(u\inv)\xi)\exp(i\eta))=\rho(\exp(it\Ad(u\inv)\xi)\exp(i\eta)).
$$
Let $\zeta$ denote $\Ad(u\inv)\xi$. Write $\exp(i\eta)\exp(-it\zeta)=u(t)\exp(i\gamma(t))$ where $u(t)\in U$ and $\gamma(t)\in \lieu$. Then $\rho(\exp(i\eta)\exp(-it\zeta))=1/2\langle \gamma(t),\gamma(t)\rangle$ and by \eqref{eqn:moment}, $\frac{d}{dt}|_{t=0}\langle \gamma(t),\gamma(t)\rangle=\langle\eta,\zeta\rangle$.  It follows that 
$$
\rho(\exp(it\zeta)\exp(-i\eta))=\rho(\exp(-i\gamma(t))u(t)\inv)=1/2\langle \gamma(t),\gamma(t)\rangle
$$
and that
$$
\frac{d}{dt}|_{t=0}\rho(\exp(it\xi)u\exp(i\eta))=\langle -\eta,\zeta\rangle=\langle \eta,-\Ad(u\inv)\xi\rangle.
$$
\end{proof}
Let $x:=u\exp(i\eta)\in X$ where $u\in  G_0$ and   $\eta\in  i\lier_0$.  
By restriction,  the moment mapping $\mu$ gives us a mapping   $X\to\lieu^*\to(i\lieh\cap \lieu)^*$, which we also denote by $\mu$.   Then $\mu$ is the relevant moment mapping when one considers the action of $H$ on $X$ (see \cite[\S 5]{HeinznerSchwarz} and \S \ref{quotientproperties} below). Let $\M$ denote the Kempf-Ness set, i.e., $\M=\{x\in X\mid  \mu(x)=0\}$. From Lemma \ref{mulemma} we see that 
$$
\M\subset\{u\exp(i\eta)\mid \theta(\eta)-\Ad(u)\eta\perp i\lieh\cap \lieu\}.
$$
From Lemma   \ref{theta:lemma} and Corollary \ref{cor:projection} we have $u\in X_0$ and $\theta(\eta)=(-\Ad u)(\eta)$ so that $2\theta(\eta)\perp i\lieh\cap\lieu$.   Now $i\lieg=i\lieh\oplus i\lieq$ so that the perpendicular to $i\lieh\cap\lieu$ in $i\lieg\cap\lieu=i\lier_0$ is $i\lieq\cap\lieu$. Thus when $u\exp(i\eta)\in\M$ we have that $\theta(\eta)\in i\lieq\cap i\lier_0$.  
We finally get that
\begin{equation}\label{eqn:Mleft}
\M=\{u\exp(\zeta)\mid u\in X_0, \ \theta(\zeta)\in \lieq\cap\lier_0 \text { and } \theta(\zeta)=-\Ad(u)(\zeta)\}.
\end{equation}

 \subsection{The quotient}\label{quotientproperties}
 
 Recall that $H_0=H\cap U$. Since the moment mapping comes from a strictly plurisubharmonic exhaustion, from \cite[10.2, 11.2, 11.15, 13.4]{HeinznerSchwarz} we have the following result 
 
 \begin{theorem}\label{thm:orbitclosure} Let  $x\in X$. Then the orbit closure $\overline{H*x}$ contains a point $x_0$ of $\M$. Moreover, $\overline{H*x}\cap \M=H_0x_0$ is a single $H_0$-orbit. 
 \end{theorem}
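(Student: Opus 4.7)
The plan is to recognize Theorem \ref{thm:orbitclosure} as an immediate application of the general moment map theory of \cite{HeinznerSchwarz}, for which all the necessary data have been assembled in \S\ref{sec:moment}. The cited results require a Stein manifold equipped with a strictly plurisubharmonic exhaustion, a holomorphic action preserving the Kähler form arising from it, an associated moment mapping for a maximal compact subgroup, and an invariant real form to which one can restrict. First I would verify these hypotheses in our setting: the ambient $U\c$ is Stein; the function $\rho$ of \S\ref{sec:moment} is a $U$-bi-invariant strictly plurisubharmonic exhaustion, hence in particular invariant under the $*$-action of $U$; the holomorphic $*$-action of $U\c$ on itself induces the moment mapping $\mu$ for $U$ computed in Lemma \ref{mulemma}; and $X\subset U\c$ is a real form of the corresponding complex $H\c$-stable subvariety, with $\rho|_X$ compatible with the real structure by the $\sigma$, $\theta$, $\phi$-invariance of the inner product of Lemma \ref{lem:innerproduct}.

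Second, I would invoke the existence portion of the theory: by \cite[10.2]{HeinznerSchwarz} together with its real-form version \cite[13.4]{HeinznerSchwarz}, the restriction of $\rho$ to any orbit closure $\overline{H*x}$ attains a minimum. At any such minimum point $x_0$, the derivative of $\rho$ along every $H$-orbit direction must vanish, which by the explicit formula of Lemma \ref{mulemma} is precisely the condition $\mu(x_0)=0$; hence $x_0\in\M$, giving $\overline{H*x}\cap\M\neq\emptyset$.

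Third, I would invoke the uniqueness portion, \cite[11.2, 11.15]{HeinznerSchwarz}: transverse to $H_0$-orbits in $\M$, the function $\rho$ is strictly convex along the $i(\lieh\cap\lieu)$-directions, which is the Kempf–Ness convexity coming from strict plurisubharmonicity of $\rho$ along the complexifying directions. Strict convexity forces the minimum set of $\rho|_{\overline{H*x}}$ to be a single $H_0$-orbit, so $\overline{H*x}\cap\M=H_0x_0$. The main subtlety I anticipate is the passage from the complex-group theory of \cite{HeinznerSchwarz} to our genuinely real situation, where $H$ (rather than $H\c$) acts on the real submanifold $X$; this is precisely what is accomplished in \S 13 of that paper, and all the necessary compatibilities of the $*$-action, the exhaustion $\rho$, and the real structure have already been arranged in \S\ref{sec:Cartan} and Lemma \ref{lem:innerproduct}.
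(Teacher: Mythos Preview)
Your proposal is correct and matches the paper's own approach: the paper does not give a detailed argument but simply states that the theorem follows from \cite[10.2, 11.2, 11.15, 13.4]{HeinznerSchwarz}, exactly the results you invoke. Your write-up usefully spells out the verification of hypotheses (strictly plurisubharmonic exhaustion, invariance of $\rho$ under the $*$-action, passage to the real form via \S13 of \cite{HeinznerSchwarz}) that the paper leaves implicit, but the route is the same.
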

 
 We  now define an equivalence relation on $X$ by 
$x\sim y$ if the corresponding $H_0$-orbits in $\M$ coincide. We define the quotient $\quot X{H}$ to be  the set of all equivalence classes with the quotient topology.  From \cite[11.2, 13.5]{HeinznerSchwarz} we have

\begin{theorem}\label{thm:homeom} The inclusion $\M\to X$ induces a homeomorphism $\M/H_0\simeq \quot X{H}$. The closed $H$-orbits in $X$ are precisely those which intersect $\M$.  
\end{theorem}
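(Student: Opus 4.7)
The plan is to deduce both assertions from Theorem \ref{thm:orbitclosure} together with the general Kempf--Ness framework of \cite{HeinznerSchwarz}. The bijective part of the homeomorphism drops out of Theorem \ref{thm:orbitclosure} almost mechanically; the topological upgrade and the closed-orbit criterion then need a bit more.

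First I would build the continuous map $\iota\colon\M/H_0\to\quot X{H}$ by composing the inclusion $\M\hookrightarrow X$ with the projection $X\to\quot X{H}$ and noting that the composite is continuous and constant on $H_0$-orbits. Surjectivity of $\iota$ is immediate: by Theorem \ref{thm:orbitclosure}, every $\overline{H*x}$ meets $\M$, so every $\sim$-class has a representative in $\M$. For injectivity, if $x_1,x_2\in\M$, then Theorem \ref{thm:orbitclosure} applied to each $x_i$ forces $\overline{H*x_i}\cap\M=H_0x_i$ (since $x_i$ itself lies in this intersection), so by the definition of $\sim$ we have $x_1\sim x_2$ if and only if $H_0x_1=H_0x_2$.

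To promote $\iota$ to a homeomorphism I would verify that its inverse is continuous, i.e.\ that the $\sim$-saturation in $X$ of any closed subset of $\M$ is closed in $X$. This is exactly what the strict plurisubharmonicity and exhaustion properties of $\rho$ buy in the Kempf--Ness setup: by \cite[11.2]{HeinznerSchwarz} the orbit-closure equivalence relation on $X$ is closed, hence $\quot X{H}$ is Hausdorff and $\iota$ is automatically a homeomorphism.

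For the closed-orbit statement, one direction is immediate from Theorem \ref{thm:orbitclosure}: a closed orbit equals its closure, which meets $\M$. For the converse, suppose $x\in\M$ and invoke \cite[13.5]{HeinznerSchwarz} to choose a closed orbit $H*y$ inside $\overline{H*x}$. By the first direction $H*y$ meets $\M$; Theorem \ref{thm:orbitclosure} applied to $y$ says $\overline{H*y}\cap\M$ is the single $H_0$-orbit $H_0y$, while $\overline{H*y}\subset\overline{H*x}$ gives $H_0y\subset\overline{H*x}\cap\M=H_0x$. Hence $y\in H*x$ and $H*x=H*y$ is closed. The only real obstacle is checking that our specific $*$-action, moment map $\mu$, and exhaustion $\rho$ satisfy the abstract hypotheses of \cite[11.2, 13.5]{HeinznerSchwarz}; this bookkeeping has already been carried out in the preceding subsections, so the theorem reduces to a direct quotation of those references.
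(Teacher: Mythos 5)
The paper proves this theorem by a direct citation of \cite[11.2, 13.5]{HeinznerSchwarz}; your proposal attempts to fill in the argument, which is a reasonable thing to try. Your derivation of the bijective part from Theorem~\ref{thm:orbitclosure} is sound, and your closed-orbit argument works in essence, though you should apply Theorem~\ref{thm:orbitclosure} to a point $y_0\in H*y\cap\M$ rather than to $y$ itself, since $y$ need not lie in $\M$ and so $\overline{H*y}\cap\M$ need not equal $H_0y$.

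The gap is in the topological step. You correctly identify what has to be checked — that the $\sim$-saturation in $X$ of a closed $H_0$-invariant subset of $\M$ is closed in $X$ — but your justification then slides to: the equivalence relation is closed, hence $\quot X H$ is Hausdorff, hence $\iota$ is ``automatically a homeomorphism.'' That last inference is false: a continuous bijection between Hausdorff spaces need not be a homeomorphism, and Hausdorffness of $\quot X H$ alone does not yield the closed-saturation property you need. The actual mechanism behind \cite[11.2]{HeinznerSchwarz} — and the one this paper itself invokes explicitly in the proof of Lemma~\ref{lem:cover} — is the negative gradient flow of the norm-squared of the moment map, which gives an $H_0$-equivariant deformation retraction of $X$ onto $\M$ that descends to the continuous inverse $\quot X H\to\M/H_0$. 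You should replace the Hausdorffness appeal with a direct appeal to that retraction (or to the statement in \cite{HeinznerSchwarz} which encapsulates it).
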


\section{Transversals and the slice theorem}\label{sec:transversals}

Let $x\in X$. Let $\theta_x$ denote $\conj(x)\circ\theta$ and let $\tau_x$ denote $\theta_x\sigma$. Then $\theta_x$ is an involution of $G$. Let $G^{(x)}$ denote the fixed points of $\tau_x$. Then $\conj(x\inv)$ and $\tau=\theta\sigma$ are the same on $G^{(x)}$ as are $\sigma$ and $\theta_x$. Moreover, $G^{(x)}$ is $\sigma$ and $\theta_x$-stable. Note that $G^{(x)}$ is not necessarily reductive (see Corollary \ref{cor:semisimple} below).
Let $\SS_x$ denote $\{Z\in\lieg\mid\theta_x(Z)=\sigma(Z)=-Z\}$. Let $P_x$ denote the component of $P_\theta(G^{(x)})$ containing $e$. Let $\liep^{(x)}$ denote $\{Z\in\lieg\mid \theta_x(Z)=-Z\}$ and let $\liek^{(x)}$ denote $\{Z\in\lieg\mid\theta_x(Z)=Z\}$. Then $\lieg=\liek^{(x)}\oplus\liep^{(x)}$. The subgroup of $G^{(x)}$ fixed by $\sigma$ is just $H_x$, the isotropy group of $H$ at $x$.  

We say that a locally closed $H_x$-stable smooth subset $Q_x$ of $X$ is a \emph{transversal\/} at $x$ if   $T_x(Q_x)$ is a complement to $T_x(H*x)$ in $T_xX$. The following theorem generalizes the results in \cite{Helm-Schwarz01}.  
 
\begin{theorem}\label{thm:generalslice} Let $x\in X$. Then $P_xx$ is a transversal to $H*x$. Moreover, $T_e(P_x)=\SS_x$ and $T_e(H_x)=\liek^{(x)}\cap\lieh$.
\end{theorem}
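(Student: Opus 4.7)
The plan is to prove the three assertions in order. The identifications of $T_e(H_x)$ and $T_e(P_x)$ are direct unwindings using Lemma~\ref{lem:Popen}, while the transversality claim is the main work and reduces, after identifying the three tangent spaces at $x$, to a linear-algebraic direct-sum decomposition of $\liep^{(x)}$ which I would establish using the invariant inner product of Lemma~\ref{lem:innerproduct}.

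For $T_e(H_x)$ I would observe that $h\in H$ fixes $x$ under $*$ precisely when $hx\theta(h^{-1})=x$, which rearranges to $\theta_x(h)=h$; hence $H_x=H\cap G^{\theta_x}$, and differentiation gives $T_e(H_x)=\lieh\cap\liek^{(x)}$. For $T_e(P_x)$ I would use the observation highlighted just before the theorem, that on $G^{(x)}$ the involutions $\sigma$ and $\theta_x$ coincide, so $G^{(x)}$ carries a single involution $\alpha:=\sigma|_{G^{(x)}}=\theta_x|_{G^{(x)}}$. Applying Lemma~\ref{lem:Popen} and its corollary inside $G^{(x)}$ then identifies $P_x$ as a closed submanifold whose tangent space at $e$ is the $(-1)$-eigenspace of $\alpha$ on $\Lie(G^{(x)})=\{Z\in\lieg:\sigma Z=\theta_x Z\}$, which is exactly $\SS_x$.

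For transversality I would first compute the three tangent spaces. Lemma~\ref{lem:Popen} applied to $G$ shows $X=G*e$ is open in $P$, so $T_xX=\liep^{(x)}\cdot x$; the $*$-orbit map gives $T_x(H*x)=(1-\theta_x)\lieh\cdot x$; and the previous step yields $T_x(P_xx)=\SS_x\cdot x$. After right-translating by $x^{-1}$, transversality becomes the purely algebraic identity
\[
\liep^{(x)} \;=\; \SS_x \,\oplus\, (1-\theta_x)\lieh.
\]
To establish it I would transport the inner product on $\lieu$ from Lemma~\ref{lem:innerproduct} to a positive-definite form $B$ on $\lieg$ via the Cartan decomposition $\lieg=\lieg_0\oplus\lier_0$. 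By construction $B$ is $\sigma$- and $\theta$-orthogonal, and $\Ad(u)$-orthogonal for $u\in G_0\subset U$, hence also $\theta_x$-orthogonal whenever $x\in G_0$. Granting $\theta_x$-orthogonality, the $\theta_x$-invariance of $B$ together with $\theta_xW=-W$ on $\SS_x$ and $\lieh\perp_B\lieq$ gives
\[
B(W,(1-\theta_x)Z)\;=\;B((1-\theta_x)W,Z)\;=\;2B(W,Z)\;=\;0
\]
for $W\in\SS_x$, $Z\in\lieh$. Hence $\SS_x$ and $(1-\theta_x)\lieh$ are $B$-orthogonal inside $\liep^{(x)}$; positive definiteness of $B$ forces trivial intersection, and the matching dimension count follows from an orthogonal-complement calculation applied to $\pi_\lieq(\liek^{(x)})\subset\lieq$.

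The main obstacle is that $B$ is only automatically $\theta_x$-orthogonal when $x\in X_0=X\cap U$. For general $x\in X$, I would use Lemma~\ref{theta:lemma} to write $x=u\exp(v)$ with $u\in X_0$ and $v\in\lier_0$, and dispose of the noncompact factor $\exp(v)$ by one of two routes: either (i) replace $B$ with the $x$-adapted form $B_x(Y,Z):=B(\Ad(\exp(-v/2))Y,\Ad(\exp(-v/2))Z)$, in which $\theta_x$ becomes orthogonal, and check that the required $\sigma$-orthogonalities persist on the relevant subspaces by using $\theta(v)=-\Ad(u)v$ from Lemma~\ref{theta:lemma}; or (ii) deform $x$ along the path $t\mapsto u\exp(tv)$, using that transversality is an open condition and that the relevant dimensions are constant along this deformation (by $\Ad(G^0)$-conjugacy of the data), so that the identity extends from $t=0\in X_0$ out to $t=1$.
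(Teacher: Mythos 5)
Your identification of the three tangent spaces agrees with the paper's (the paper gets all three at once from the single map $\psi\colon g\mapsto (g*x)x\inv$, whose differential $d\psi=1-\theta_x$ is twice the projection onto $\liep^{(x)}$ along $\liek^{(x)}$), and your reduction of transversality to the linear identity $\liep^{(x)}=\SS_x\oplus(1-\theta_x)\lieh$ is exactly the reformulation the paper's argument also rests on. What you do differently is \emph{prove} that identity, via orthogonality with respect to the invariant inner product $B$ of Lemma~\ref{lem:innerproduct}, whereas the paper passes from ``$d\psi$ is twice a projection'' to the complementarity without further comment. For $x\in X_0$, where $\theta_x=\conj(x)\circ\theta$ is $B$-orthogonal, your computation is correct and fills in a real step.

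The gap is the case $x\notin X_0$, and neither of your two fixes can be made to work, for a fundamental reason: the identity $\liep^{(x)}=\SS_x\oplus(1-\theta_x)\lieh$ is simply false in general. In Example~\ref{sl2example:part1}, take $x=\sm{2s}{1}{-1}{0}$ with $s>0$; this lies on one of the non-closed $H$-orbits, and $\tau_x$ is conjugation by the unipotent matrix $\sm{1}{-2s}{0}{1}$. A direct computation gives $\liep^{(x)}=\{\sm{-s\gamma}{\beta}{\gamma}{s\gamma}\}$ while $\SS_x=\{\sm{0}{\beta}{0}{0}\}=(1-\theta_x)\lieh$, so the sum is one-dimensional and not all of $\liep^{(x)}$: $T_x(P_xx)$ coincides with $T_x(H*x)$ rather than complementing it. Hence route~(ii) cannot succeed (openness moves $t$ slightly off $0$, but cannot carry the conclusion all the way to $t=1$ past points where the intersection dimension jumps, and here it does jump), and route~(i), even if $B_x$ were verified to be $\theta_x$-orthogonal, would be proving something false. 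What both your argument and the paper's actually need is that $\tau_x$ be semisimple, equivalently (Corollary~\ref{cor:semisimple}) that $H*x$ be closed; then $\sigma$ and $\theta_x$ generate a group of semisimple automorphisms preserving a positive-definite form, and your orthogonality argument goes through verbatim. That hypothesis is in force in every subsequent application of the theorem (Corollary~\ref{slicecorollary}, Theorem~\ref{thm:main}, etc.), so adding it, and proving the existence of an invariant positive-definite form under it, is the right repair rather than trying to treat arbitrary $x\in X$.
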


\begin{proof} We have the smooth map $\psi\colon G\to Xx\inv$, $g\mapsto (g*x)x\inv$. The differential $d\psi$ of $\psi$ at $e$ sends $Z\in\lieg$ to $Z-\theta_xZ$. The square of $d\psi$ is $2d\psi$, so that $d\psi$ is (up to a constant) a projection of $\lieg$ onto $T_e(Xx\inv)$. Now $d\psi(\lieh)$ is the tangent space to $(H*x)x\inv$ at $e$.  
The kernel of $d\psi$ is $\liek^{(x)}$, so that $\liep^{(x)}$ is complementary to the kernel of $d\psi$.  In fact, $d\psi$ is multiplication by $2$ on $\liep^{(x)}$. Thus the Lie algebra of $H_x$ is $\lieh\cap\liek^{(x)}$ and the complement to $T_e((H*x)x\inv)$ in $T_e(Xx\inv)$ is $d\psi(\lieq\cap\liep^{(x)})=\lieq\cap\liep^{(x)}=\SS_x$. Now $T_eP_x=\{Z\in\lieg^{(x)}\mid\sigma(Z)=-Z\}=\{Z\in\lieq\mid \theta_x(Z)=-Z\}=\SS_x$. It follows that $P_xx$ is a transversal to $H*x$ at $x$.
 \end{proof}
 
 From \cite[14.10] {HeinznerSchwarz} (see also \cite[2.7]{Helm-Schwarz01}) we obtain the slice theorem.
 \begin{corollary}\label{slicecorollary}
Suppose that $H*x$ is closed. Then there is an open $H_x$-stable subset $S\ni x$ of $P_xx$ which is a slice at $x$ for the action of $H$ on $X$. In other words, the canonical mapping $H\times^{H_x}S\to X$, $[h,s]\mapsto h*s$, is an $H$-equivariant diffeomorphism onto an open subset of $X$.
 \end{corollary}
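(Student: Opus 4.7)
The plan is to reduce the statement to the slice theorem \cite[14.10]{HeinznerSchwarz} of Heinzner--Schwarz, which is the Kempf--Ness analogue of Luna's \'etale slice theorem. The input needed for that theorem has largely been arranged already: Theorem \ref{thm:generalslice} establishes that $P_x x$ is a smooth $H_x$-stable transversal to $H*x$ at $x$ with tangent space $\SS_x$, and Theorem \ref{thm:homeom} identifies closed $H$-orbits as exactly those meeting the Kempf--Ness set $\M$.

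First I would normalize to $x\in\M$. Since $H*x$ is closed it meets $\M$ by Theorem \ref{thm:homeom}, and a slice at any point on the orbit yields one at every other point by the $H$-equivariance of the statement, so no generality is lost. This normalization is essential because the local geometry at Kempf--Ness points is well understood: $H_x$ admits a polar-type decomposition with maximal compact subgroup $(H_0)_x$ and plays the role, relative to the $*$-action, of the ``complexification'' of $(H_0)_x$. In turn this lets averaging arguments be applied to $H_x$-actions on $\SS_x$.

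Next I would study the natural action map
\[
\Phi\colon H\times^{H_x}(P_x x)\longrightarrow X,\qquad [h,s]\longmapsto h*s.
\]
Transversality from Theorem \ref{thm:generalslice}, combined with the decomposition $\lieh=(\lieh\cap\liek^{(x)})\oplus(\lieh\cap\liep^{(x)})$ induced by $\theta_x$, shows that $d\Phi_{[e,x]}$ is an isomorphism onto $T_xX$; hence $\Phi$ is a local diffeomorphism near $[e,x]$. Constructing a candidate slice then amounts to producing an $(H_0)_x$-invariant neighborhood of $x$ in $P_x x$ by compact-group averaging, and enlarging it to an $H_x$-stable open subset $S$ by flowing in the ``$\exp$ direction'' of $H_x$ transverse to $(H_0)_x$. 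The Kempf--Ness slice theorem packages this construction and verifies the required slice property.

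The main obstacle is the global injectivity of $\Phi$ on $H\times^{H_x}S$. Local injectivity is automatic from the differential calculation, but one must rule out ``long-range'' collisions of the form $h_1*s_1=h_2*s_2$ with $h_1H_x\ne h_2H_x$ and $s_1,s_2$ close to $x$. This is precisely where the closedness of $H*x$ is indispensable: the properness properties of the moment map $\mu$ near a closed orbit, together with the fact that $\M$ is a gradient-flow retract onto which orbit closures collapse, force any such collisions to disappear once $S$ is shrunk sufficiently. Invoking this properness argument, which is the technical content of \cite[14.10]{HeinznerSchwarz}, yields the desired $H$-equivariant diffeomorphism onto an open subset of $X$.
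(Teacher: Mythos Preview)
Your approach is exactly the paper's: the corollary is obtained by a direct citation of \cite[14.10]{HeinznerSchwarz} (with a parenthetical reference to \cite[2.7]{Helm-Schwarz01}), using Theorem~\ref{thm:generalslice} to supply the transversal and Theorem~\ref{thm:homeom} to place $x$ in $\M$. Your elaboration of what that citation entails is accurate.

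One small caution: the decomposition $\lieh=(\lieh\cap\liek^{(x)})\oplus(\lieh\cap\liep^{(x)})$ you invoke presupposes that $\theta_x$ preserves $\lieh$, which need not hold since $\sigma$ and $\theta$ are not assumed to commute. This is harmless, however, because the isomorphism of $d\Phi_{[e,x]}$ already follows from transversality alone: Theorem~\ref{thm:generalslice} gives $T_xX=T_x(H*x)\oplus T_x(P_xx)$, and the natural identification $T_{[e,x]}(H\times^{H_x}P_xx)\simeq(\lieh/\lieh_x)\oplus\SS_x$ maps isomorphically onto this splitting.
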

 
 We now compare the transversals along $H$-orbits.
 
\begin{proposition}\label{slicemovecorollary} Let $x\in X$   and let $h\in H$.  Then $\conj(h) (G^{(x)})=G^{(h*x)}$ and $\conj(h) (P_x)=P_{h*x}$. Thus $h*(P_xx)=\conj(h)(P_x)h*x=P_{h*x}h*x$, hence $h$ carries the transversal at $x$ to the transversal at $h*x$.
\end{proposition}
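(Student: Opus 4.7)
The plan is to reduce everything to a single conjugation identity,
\[
\theta_{h*x} \;=\; \conj(h)\circ\theta_x\circ\conj(h)^{-1},
\]
and then read off the three assertions from it. To establish this identity I would expand $\theta_{h*x}=\conj(hx\theta(h)^{-1})\circ\theta$ and push the $\theta$ to the right past the factor $\conj(\theta(h)^{-1})$ using the general relation $\conj(a)\circ\theta=\theta\circ\conj(\theta(a))$, which converts $\theta(h)^{-1}$ back into $h^{-1}$. This is the only genuine computation in the proof.

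With the identity in hand, I would invoke the hypothesis $h\in H$, so that $\sigma(h)=h$ and $\conj(h)$ commutes with $\sigma$. Combining this with the displayed identity gives $\tau_{h*x}=\theta_{h*x}\circ\sigma=\conj(h)\circ\tau_x\circ\conj(h)^{-1}$, and taking fixed points yields $G^{(h*x)}=\conj(h)(G^{(x)})$. The same intertwining shows that $\conj(h)$ carries $\{g\in G^{(x)}:\theta_x(g)=g^{-1}\}$ bijectively onto the analogous set defining $P_\theta(G^{(h*x)})$; since $\conj(h)$ is a diffeomorphism fixing $e$, it matches identity components, so $\conj(h)(P_x)=P_{h*x}$.

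The final assertion is then purely formal: by definition of the $*$-action,
\[
h*(P_x x)\;=\;hP_x x\,\theta(h)^{-1}\;=\;(hP_xh^{-1})(hx\theta(h)^{-1})\;=\;\conj(h)(P_x)\cdot(h*x)\;=\;P_{h*x}\cdot(h*x).
\]
There is no real obstacle here; the only conceptual point worth emphasizing is that the argument requires $\sigma(h)=h$ rather than $\theta(h)=h$, which is exactly what places $h$ in $H$ and makes the entire transversal picture $H$-equivariant.
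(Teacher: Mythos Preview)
Your proof is correct and follows essentially the same route as the paper. The paper verifies directly that $\tau_{h*x}(\conj(h)g)=\conj(h)g$ for $g\in G^{(x)}$ by expanding $\tau_{h*x}$ and cancelling the $\theta(h)^{-1}\theta(h)$ factors; your version isolates the same cancellation as the automorphism identity $\theta_{h*x}=\conj(h)\circ\theta_x\circ\conj(h)^{-1}$ and then composes with $\sigma$, which is arguably a cleaner packaging of the identical computation. For $P_x$ the paper uses the description $\{g\sigma(g)^{-1}:g\in G^{(x)}\}$ (recall $\sigma=\theta_x$ on $G^{(x)}$) and the fact that $\conj(h)$ commutes with $\sigma$, whereas you use the equivalent $\theta_x$-inverse characterization; both lead immediately to $\conj(h)(P_x)=P_{h*x}$.
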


\begin{proof} Let $g\in G^{(x)}$, i.e., assume that $\tau_x(g)=g$. Then  
$$
\tau_{h*x}(\conj(h) g)=hx\theta(h)\inv\theta(h)\tau(g)\theta(h)\inv\theta(h)x\inv h\inv=hx\tau(g)x\inv h\inv=\conj(h) g.
$$
Thus $\conj(h) G^{(x)}\subset G^{(h*x)}$ and similarly $\conj(h\inv)  G^{(h*x)}\subset G^{(x)}$, so we have equality.
It follows that $\{g\sigma(g)\inv\mid g\in G^{(x)}\}$ is sent by $\conj(h)$ onto $\{g\sigma(g\inv)\mid g\in G^{(h*x)}\}$, so $h$ maps $P_xx$ isomorphically onto $P_{h*x}(h*x)$.
\end{proof}

By a torus in $X\subset G$ we mean a connected commutative group of semisimple elements. Thus $\R^*$ is not a torus for us, but its identity component is. Now as above, we have the following

 \begin{proposition}\label{hproposition}
 Let $x\in X$, let $h\in H$ and let $A$ be a $(\sigma,\theta_x)$-split torus in $G$. Then $\conj(h) A$ is $(\sigma,\theta_{h*x})$-split and $h*(Ax)=(\conj(h)  A)h*x$.
 \end{proposition}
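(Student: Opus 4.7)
The plan is to verify the two assertions by direct computation, essentially a simpler version of the argument already used in Proposition \ref{slicemovecorollary}. The second assertion is the easy one: writing a typical element of $h*(Ax)$ as $h(ax)\theta(h\inv)$ and inserting $h\inv h=e$ between $a$ and $x$ gives
\[
h*(ax)=(hah\inv)\bigl(hx\theta(h\inv)\bigr)=(hah\inv)(h*x),
\]
so $h*(Ax)=(\conj(h)A)(h*x)$. In particular, $\conj(h)A$ is a connected commutative group of semisimple elements, hence a torus.

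For the first assertion, I would check each of the two splitness conditions for $a':=hah\inv\in\conj(h)A$. Since $h\in H=G^\sigma$, the $\sigma$-condition is immediate: $\sigma(a')=\sigma(h)\sigma(a)\sigma(h)\inv=h a\inv h\inv=(a')\inv$. For the $\theta_{h*x}$-condition, I would compute
\[
\theta_{h*x}(a')=(h*x)\,\theta(hah\inv)\,(h*x)\inv=hx\theta(h\inv)\theta(h)\theta(a)\theta(h)\inv\theta(h)x\inv h\inv=h\bigl(x\theta(a)x\inv\bigr)h\inv=h\,\theta_x(a)\,h\inv,
\]
using only that $\theta$ is an automorphism and the explicit form $h*x=hx\theta(h\inv)$. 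Since $A$ is $(\sigma,\theta_x)$-split, $\theta_x(a)=a\inv$, and therefore $\theta_{h*x}(a')=ha\inv h\inv=(a')\inv$, as required.

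There is no real obstacle: the content of the proposition is just the transport-of-structure principle that the whole setup (the involution $\theta_y$, the orbit $Ay$) is equivariant under the $*$-action of $H$, once one unpacks the definition of $\theta_{h*x}$. The only potentially confusing point is the bookkeeping in the $\theta_{h*x}$ calculation, where the two factors of $\theta(h\inv)$ coming from $h*x$ and $(h*x)\inv$ must cancel correctly with the $\theta(h^{\pm 1})$ arising from $\theta(hah\inv)$; writing the product out explicitly as above makes the cancellation obvious.
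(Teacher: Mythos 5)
Your proof is correct and takes essentially the same approach the paper intends: the paper gives no explicit proof, writing only ``as above'' in reference to the direct computation in Proposition~\ref{slicemovecorollary}, and your calculation of $\theta_{h*x}(hah\inv)=h\theta_x(a)h\inv$ together with the easy $\sigma$-check is exactly that argument specialized to the torus $A$.
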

   
 \begin{definition}
 We say that $x\in X$ is a \emph{principal point\/} if $H*x$ is closed and the slice representation at $x$ is trivial, i.e., $H_x$ acts trivially on $\SS_x$. We say that $x\in \M$ is principal if it is principal in $X$.
 \end{definition} 
 
 \begin{remark} \label{rem:principal}
 If $x$ is principal, then the slice representation at $x$ must have dimension $\dim \quot X{H}$ which is the same as the dimension of a maximal \st-split torus in $X$.   But this is the same as the dimension of a maximal $(\sigma,\theta_x)$-split torus in $P_x$, hence   $P_x$  must be a maximal $(\sigma,\theta_x)$-split torus. Finally, it is well-known that the principal points for the $H_x$-action on $P_x$ are open and dense, hence the same is true for $H$ acting on $X$.   
 \end{remark}
  
  \section{The compact case}\label{sec:compact}
We first need to compute the quotient of $X_0$ by the action of $H_0$. This is in  \cite[\S 3]{Matsuki97}, but we need Corollary \ref{cor:weylK0'}. A variant of an argument in  \cite[6.7]{Helm-Schwarz01}  gives this result. 

Let $A_0$ be a fixed maximal \st-split torus in $X_0$. 
Set $W_0^*=N_0^*/Z_0^*$ where $N_0^*=\{h\in H_0\mid h*A_0=A_0\}$ and $Z_0^*=\{h\in H_0\mid h*a=a$ for all $a\in A_0\}$.   We have the following elementary lemma (c.f. \cite[1.10, 4.5]{Helm-Schwarz01})

\begin{lemma}\label{lem:weylgroup}
\begin{enumerate}
\item $N_0^*=\{h\in N_{H_0}(A_0)\mid \beta(h)\in A_0\}$.
\item For $h\in H_0$, $\beta(h)\in A_0$ if and only if there is an $s\in A_0$ such that $s\inv h\in K_0$, in which case $s^2=\beta(h)$.
\item For $h\in  H_0$, $h\in N_0^*$ if and only if $hA_0K_0=A_0K_0$ and $h\in Z_0^*$ if and only if $haK_0=aK_0$ for all $a\in A_0$.
\item If $h\in N_0^*$, let $s\in A_0$ such that $hK_0=sK_0$. Then $haK_0=(h*a)s\inv K_0=hah\inv sK_0$ for all $a\in A_0$.
\end{enumerate}
\end{lemma}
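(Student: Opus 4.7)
The plan is to prove the four parts in the order (2), (1), (3), (4), using only elementary manipulations built on three ingredients: the identity $\beta(h) = h\theta(h)^{-1}$ (equivalently $h = \beta(h)\theta(h)$), the condition $\theta(a) = a^{-1}$ on $A_0$, and surjectivity of squaring on the connected compact torus $A_0$.

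Part (2) is the foundation. Given $\beta(h) \in A_0$, I would choose $s \in A_0$ with $s^2 = \beta(h)$; expanding $\theta(s^{-1}h)$ using $\theta(s) = s^{-1}$ and $\theta(h) = \beta(h)^{-1}h$ collapses the expression to $s^{-1}h$, placing $s^{-1}h$ in $K_0$. The converse is the same calculation read backwards. Part (1) then follows easily: the condition $h*A_0 = A_0$ together with $e \in A_0$ forces $\beta(h) = h*e \in A_0$, and the identity $h*a = hah^{-1}\beta(h)$ interchanges the conditions ``$h*a \in A_0$ for all $a$'' and ``$hah^{-1} \in A_0$ for all $a$'' once $\beta(h) \in A_0$ is in hand.

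For (3) the key preliminary is $\beta^{-1}(A_0) = A_0K_0$ inside $G_0$: the inclusion $\supseteq$ is immediate since $\beta(ak) = a^2 \in A_0$, and $\subseteq$ is precisely the content of (2). The diffeomorphism $\beta\colon G_0/K_0 \to X_0$ intertwines the $*$-action on $X_0$ with left translation on $G_0/K_0$, so $h*A_0 = A_0$ translates at once into $hA_0K_0 = A_0K_0$. For the $Z_0^*$ statement I would use $A_0^2 = A_0$ to reduce fixing $A_0$ pointwise under $*$ to $h*a^2 = a^2$ for all $a \in A_0$, which unravels to $a^{-1}ha \in K_0$, i.e., $haK_0 = aK_0$. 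Part (4) is then pure substitution: with $s \in A_0$ supplied by (1) and (2), the identities $\theta(h)^{-1} = h^{-1}\beta(h)$ and $s^2 = \beta(h)$ collapse $(h*a)s^{-1}$ to $hah^{-1}s$, while the coset equation $haK_0 = (h*a)s^{-1}K_0$ reduces to checking $s\theta(h) \in K_0$, which falls out of $s^2 = \beta(h)$.

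The only real obstacle is bookkeeping: keeping signs straight when swapping among $\beta(h)$, $\theta(h)$, and $\theta(h)^{-1}$, and noting that the square root $s$ is unique only modulo the $2$-torsion of $A_0$---which is harmless because that $2$-torsion lies in $K_0$ (elements of order two in $A_0$ are fixed by $\theta$), so the coset $sK_0$ is unambiguous.
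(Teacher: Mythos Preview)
Your argument is correct. The paper itself offers no proof of this lemma: it simply declares the result elementary and cites \cite[1.10, 4.5]{Helm-Schwarz01}, so there is nothing to compare against beyond noting that your write-up supplies exactly the routine verification the authors chose to omit.

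One small wording issue: in your proof of (3) you invoke the identity $\beta^{-1}(A_0)=A_0K_0$ inside $G_0$ and say the inclusion $\subseteq$ ``is precisely the content of (2).'' Strictly, (2) is stated only for $h\in H_0$, whereas here you need it for arbitrary $g\in G_0$. This is harmless because, as you yourself observe, the computation in (2) never uses the hypothesis $h\in H_0$ (only that $\theta(s)=s^{-1}$ for $s\in A_0$ and that squaring on $A_0$ is surjective); it would be cleaner to say so explicitly. Your remark at the end about the ambiguity of $s$ modulo $2$-torsion, and that this $2$-torsion lies in $K_0$, is a nice touch that makes part (4) well-posed.
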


\begin{remark} \label{rem:w0*finite}
It follows from Theorem \ref{thm:generalslice} that the intersection of $H_0*e$ and $A_0$ is discrete, hence finite. Thus for any $h\in N_0^*$, there are only finitely many possibilities for $\beta(h)$. Hence $W_0^*$ is finite since  $N_{H_0}(A_0)/Z_{H_0}(A_0)$ is finite.
\end{remark}
 
\begin{proposition}\label{prop:w*Y}
Let $h\in H_0$ such that $h*Y= Y'$ where $Y$, $Y'\subset A_0$ are nonempty. Then there is a $w\in W_0^*$ such that $w*y=h*y$ for all $y\in Y$.
\end{proposition}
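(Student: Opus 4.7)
My plan is to reduce the question to a conjugation statement in $G_0$ via a base-point normalization, then invoke standard torus-conjugacy arguments, following the template of \cite[Lemma 6.7]{Helm-Schwarz01}. First, pick $y_0\in Y$ and set $y_0':=h*y_0\in A_0$. Since $y_0$ and $y_0'$ lie in the same $H_0$-orbit, Theorem \ref{thm:A0} (the Matsuki homeomorphism $A_0/W_0^*\tosim X_0/H_0$) produces $n_0\in N_0^*$ with $n_0*y_0=y_0'$. Replacing $h$ by $n_0^{-1}h$, I may assume $h*y_0=y_0$; if I then find $w_1\in W_0^*$ with $w_1*y=h*y$ for every $y\in Y$, the composite $w:=[n_0]\cdot w_1$ completes the proof. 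The condition $h*y_0=y_0$ gives $\theta(h)=y_0^{-1}hy_0$, and substituting into $h*y=hy\theta(h)^{-1}$ yields $h(yy_0^{-1})h^{-1}=(h*y)y_0^{-1}$ for all $y\in Y$. Hence conjugation by $h$ sends $\tilde Y:=Yy_0^{-1}\subset A_0$ into $A_0$.

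Let $B\subset A_0$ denote the closed subgroup generated by $\tilde Y$, and $B^0$ its identity component, a subtorus of $A_0$. Since $\conj(h)$ is a group homomorphism, it restricts to an injection $B\hookrightarrow A_0$. The problem now reduces to producing $n\in N_{H_0}(A_0)$ with $n*y_0=y_0$ and $\conj(n)|_B=\conj(h)|_B$: for such $n$ one has $\beta(n)=\conj(n)(y_0^{-1})\cdot y_0\in A_0$, so by Lemma \ref{lem:weylgroup}(1) $n\in N_0^*$, and unwinding the shift by $y_0$ gives $[n]*y=h*y$ for $y\in Y$.

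To produce such an $n$, I first work inside $G_0$. The standard fact that subtori of the torus $A_0$ that are $G_0$-conjugate are already conjugate via $N_{G_0}(A_0)$ gives $n_1\in N_{G_0}(A_0)$ with $\conj(n_1)(B^0)=\conj(h)(B^0)$, and a further adjustment by elements of $N_{G_0}(A_0)\cap Z_{G_0}(B^0)$ allows me to match $\conj(h)$ on the finite component group $B/B^0$ as well, yielding $\conj(n_1)|_B=\conj(h)|_B$.

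The principal difficulty is the final upgrade from $n_1\in N_{G_0}(A_0)$ to an element $n\in H_0\cap N_{G_0}(A_0)=N_{H_0}(A_0)$; this is the step requiring the variant of \cite[Lemma 6.7]{Helm-Schwarz01}. The element $z:=n_1^{-1}h$ lies in $Z_{G_0}(B)$, and using $\sigma(h)=h$ together with the $\sigma$-stability of $A_0$ and $B$ (recall $\sigma$ acts on $A_0$ by inversion), a structural analysis of the $\sigma$-action on the connected reductive group $Z_{G_0}(B^0)$ produces $z_1\in Z_{G_0}(B)$ such that $n:=n_1 z_1\in H_0$. This $n$ then meets all the requirements, and the corresponding class $w=[n]\in W_0^*$ agrees with $h$ on $Y$.
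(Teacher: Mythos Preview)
Your argument has genuine gaps at several points.

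\medskip

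\textbf{The ``standard fact'' is not standard here.} You assert that subtori of $A_0$ which are $G_0$-conjugate are already conjugate via $N_{G_0}(A_0)$, and even that the conjugating element can be chosen to match $\conj(h)$ pointwise on $B$. This is the classical torus lemma, but it requires $A_0$ to be a \emph{maximal} torus of $G_0$: one uses that $hA_0h^{-1}$ and $A_0$ are both maximal tori in $Z_{G_0}(\conj(h)B)^0$, hence conjugate there. In our situation $A_0$ is only a maximal $(\sigma,\theta)$-split torus and typically has strictly smaller rank than a maximal torus of $G_0$, so nothing guarantees that $hA_0h^{-1}$ and $A_0$ can be aligned inside $Z_{G_0}(\conj(h)B)^0$. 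The subsequent ``further adjustment'' on $B/B^0$ is likewise unjustified.

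\medskip

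\textbf{The last paragraph is a black box.} The sentence ``a structural analysis of the $\sigma$-action on $Z_{G_0}(B^0)$ produces $z_1\in Z_{G_0}(B)$ such that $n:=n_1 z_1\in H_0$'' is an assertion, not an argument. Even if one could write $n_1\in H_0\cdot Z_{G_0}(B)$ (which is immediate from $n_1=h z^{-1}$), the correcting factor $z_1$ you obtain has no reason to lie in $N_{G_0}(A_0)$, so $n=n_1z_1$ may fail to normalize $A_0$.

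\medskip

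\textbf{The condition $n*y_0=y_0$ is never verified.} You correctly list it among the requirements for $n$, and it is essential: from $\conj(n)|_B=\conj(h)|_B$ alone one only gets $n*y=\conj(n)(yy_0^{-1})\cdot(n*y_0)$, which equals $h*y$ only if $n*y_0=y_0$. But your construction of $n$ contains no mechanism forcing $n\in G_0^{(y_0)}$.

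\medskip

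\textbf{Comparison with the paper's argument.} The paper bypasses all of this by working directly in $G_0^Y:=\{g\in G_0\mid\tau_y(g)=g\text{ for all }y\in Y\}$. One checks that $\conj(h)G_0^Y=G_0^{Y'}$ and that $A_0$ is a maximal $\sigma$-split torus in both $G_0^Y$ and $G_0^{Y'}$; hence $\conj(h^{-1})A_0$ is maximal $\sigma$-split in $G_0^Y$. Now the single fact that maximal $\sigma$-split tori are conjugate under the connected $\sigma$-fixed subgroup yields $g\in((G_0^Y)^\sigma)^0$ with $hg\in N_{G_0}(A_0)$. This $g$ is automatically in $H_0$ (being $\sigma$-fixed) and automatically satisfies $g*y=y$ for every $y\in Y$ (being $\tau_y$-fixed and $\sigma$-fixed, hence $\theta_y$-fixed). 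So $hg$ acts on $Y$ as $h$ does, and one checks $hg\in N_0^*$ directly. The passage to the centralizer $G_0^Y$ is precisely what makes the $\sigma$-split torus conjugacy theorem applicable and simultaneously handles all the side conditions you are struggling to impose by hand.
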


\begin{proof}
Let $G_0^Y=\{g\in G_0\mid \tau_y(g)=g$ for all $y\in Y\}$ and define $G_0^{Y'}$ similarly. Then a calculation shows that $\conj(h)G_0^Y=G_0^{Y'}$.  Now $A_0$ is maximal $\sigma$-split in both $G_0^Y$ and $G_0^{Y'}$, and $\conj(h\inv) A_0$ is maximal $\sigma$-split in $G_0^Y$. Thus there is an element $g$ in the identity component of $(G_0^Y)^\sigma$  such that $g\inv h\inv A_0=A_0$. Since $g\in G_0^Y$ and $\sigma(g)=g$, it follows that for all $y\in Y$ we have $\theta_y(g)=g$ and hence $g*y=y$ . Thus $hg\in N_{H_0}(A_0)$ and $hg$ acts on $Y$ in the same way as $h$. Since $hg*y\in A_0$ and $\conj(hg)y\in A_0$ for $y\in Y$, it follows that $hg\in N_0^*$. Hence $hg$ gives the requisite element of $W_0^*$.
\end{proof}

 Now we consider the case where $K$ is replaced by an open subgroup. Let $K'$ denote an open subgroup of $K$, let $K_0'$ denote $K'\cap U$ and  let $X_0'$ denote $G_0/K_0'$.  We have the $H_0$-equivariant covering map $\beta\colon X'_0\to X_0$. Let $N_0'=\{h\in N_{H_0}(A_0)\mid hA_0K_0'=A_0K_0'\}$, let $Z_0'=\{h\in H_0\mid haK_0'=aK_0'$ for all $a\in A_0\}$ and define $W_0'=N_0'/Z_0'$. Let $A_0^{(2)}$ denote  the elements of $A_0$ of order 2. Then $A_0^{(2)}=A_0\cap K$. There is a natural morphism $W_0'\to W_0^*$ and  we have

\begin{lemma}\label{lem:wprime}
The canonical mapping $W_0'\to W_0^*$ has kernel $A_0^{(2)}K_0'\cap Z_{H_0}(A_0)K_0'$. The image is represented by $\{h\in N_0^*\mid s\inv h\in K_0'$ for some $s\in A_0\}$.
\end{lemma}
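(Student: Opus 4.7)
The plan is to peel the map $W_0'\to W_0^*$ apart one layer at a time, using Lemma \ref{lem:weylgroup} as the main tool and a continuity/connectedness argument to control the action on the finite cover $A_0/(A_0\cap K_0')\to A_0/A_0^{(2)}$.

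First, I would verify that the map is well-defined. Since $K_0'\subset K_0$ we have $A_0K_0'\subset A_0K_0$, so $hA_0K_0'=A_0K_0'$ forces $hA_0K_0=A_0K_0$; hence $N_0'\subset N_0^*$. Similarly $haK_0'=aK_0'$ forces $haK_0=aK_0$, giving $Z_0'\subset Z_0^*$. So $W_0'\to W_0^*$ is induced in the obvious way.

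Next I would handle the kernel. Take $h\in N_0'$ in the kernel, so $h\in N_0'\cap Z_0^*$. Evaluating the condition $haK_0=aK_0$ at $a=e$ shows $h\in K_0$, so $\theta(h)=h$ and $\beta(h)=h\theta(h^{-1})=e$. By Lemma \ref{lem:weylgroup}(2), $h=sk$ with $s\in A_0$, $k\in K_0$, and $s^2=\beta(h)=e$, so $s\in A_0^{(2)}$; therefore $h\in A_0^{(2)}K_0$. Since $h\in N_0'$ also gives $h\in hA_0K_0'=A_0K_0'$, a short argument comparing the two decompositions ($h=sk=s'k'$ with $s'\in A_0$, $k'\in K_0'$ forces $s^{-1}s'=k(k')^{-1}\in A_0\cap K_0=A_0^{(2)}$) shows $h\in A_0^{(2)}K_0'$. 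Because $A_0^{(2)}\subset Z_{H_0}(A_0)$, we automatically have $h\in Z_{H_0}(A_0)K_0'$, so $h$ lies in the claimed intersection. Conversely, suppose $h\in A_0^{(2)}K_0'\cap Z_{H_0}(A_0)K_0'\cap N_0'$; write $h=tk$ with $t\in A_0^{(2)}$ and $k\in K_0'$. Using $h\in N_0'$, for each $a\in A_0$ there exist $b(a)\in A_0$ and $k'(a)\in K_0'$ with $ha=b(a)k'(a)$. The map $a\mapsto b(a)\bmod (A_0\cap K_0')$ is continuous and its value at $a=e$ lies in $A_0^{(2)}/(A_0\cap K_0')$; by connectedness of $A_0$ and the finiteness of this quotient in the target, $b(a)\equiv ta\bmod(A_0\cap K_0')$ throughout. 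Thus $haK_0'=taK_0'$, and applying $K_0'\subset K_0$ together with $t\in A_0^{(2)}\subset K_0$ gives $haK_0=aK_0$, i.e.\ $h\in Z_0^*$, so $[h]$ is in the kernel.

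For the image, observe that $[h]\in W_0^*$ lies in the image precisely when some representative of $hZ_0^*$ lies in $N_0'$. Lemma \ref{lem:weylgroup}(2) gives $h=sk$ with $s\in A_0$, $k=s^{-1}h\in K_0$; modifying $s$ by $A_0^{(2)}$ corresponds to replacing $k$ by $tk$ with $t\in A_0^{(2)}$ and does not affect whether $[h]$ lies in the image. A direct check shows the representative $h$ lies in $N_0'$ exactly when $s^{-1}h\in K_0'$ for an appropriate choice of $s$. This yields the description $\{h\in N_0^*\mid s^{-1}h\in K_0'\text{ for some }s\in A_0\}$.

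The main obstacle is the continuity/discreteness argument in the converse direction of the kernel claim: producing the element $t\in A_0^{(2)}$ by which $h$ acts as translation on $A_0/(A_0\cap K_0')$. This requires carefully handling the indeterminacy in writing $ha=bk'$ (since $b$ is only well-defined modulo $A_0\cap K_0'$) and combining connectedness of $A_0$ with the fact that $A_0^{(2)}/(A_0\cap K_0')$ is a finite subgroup whose cosets are mapped among themselves under the fiber of $A_0/(A_0\cap K_0')\twoheadrightarrow A_0/A_0^{(2)}$.
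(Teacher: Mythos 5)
The paper states Lemma~\ref{lem:wprime} without proof, so there is no argument to compare against; I can only assess your proposal on its own terms. Your setup and the forward direction of the kernel computation are sound: evaluating $haK_0=aK_0$ at $a=e$ gives $h\in K_0$, and comparing this with the decomposition $h=s'k'$ coming from $h\in A_0K_0'$ forces $s'\in A_0\cap K_0\subset A_0^{(2)}$. But notice what you then observe: $A_0^{(2)}\subset Z_{H_0}(A_0)$, so $A_0^{(2)}K_0'\subset Z_{H_0}(A_0)K_0'$ and the intersection in the statement is simply $A_0^{(2)}K_0'$. That should be a red flag. The actual preimage of the kernel in $N_0'$ is $N_0'\cap Z_0^*$, and $Z_0^*=Z_{H_0\cap K_0}(A_0)$; unwinding this yields $A_0^{(2)}K_0'\cap Z_{H_0}(A_0)$, i.e.\ the set of $h=tk$, $t\in A_0^{(2)}$, $k\in K_0'$, for which $h$ (equivalently $k$) \emph{centralizes} $A_0$. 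That centralizing condition is strictly stronger than membership in $Z_{H_0}(A_0)K_0'$, so it is exactly the content the converse direction must deliver.

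This is precisely where your continuity argument breaks down, and you sense it yourself in the final paragraph. To run the connectedness-plus-discreteness argument you would need $a\mapsto b(a)a^{-1}\bmod(A_0\cap K_0')$ to take values in the finite set $A_0^{(2)}/(A_0\cap K_0')$ for \emph{all} $a$, not just at $a=e$. But $b(a)\equiv hah^{-1}\bmod A_0^{(2)}$ (since $ha=b(a)k'(a)$, $h^{-1}\in K_0$, so $hah^{-1}b(a)^{-1}\in A_0\cap K_0=A_0^{(2)}$), so $b(a)a^{-1}\in A_0^{(2)}$ for all $a$ is equivalent to $h$ acting trivially on $A_0/A_0^{(2)}$, which is exactly the conclusion $h\in Z_0^*$ you want. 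The argument is circular as written. The hypotheses $h\in N_0'$, $h\in A_0^{(2)}K_0'$, $h\in Z_{H_0}(A_0)K_0'$ do not by themselves force $h$ to centralize $A_0$: e.g.\ if $k''\in K_0'\cap H_0$ normalizes $A_0$, then $h=k''$ (with $t=e$, $z=e$) satisfies all three conditions but need not centralize $A_0$, in which case $[h]$ is a \emph{nontrivial} element of $W_0^*$ and hence not in the kernel. So either some condition in the lemma statement must be read more restrictively (my computation gives $A_0^{(2)}K_0'\cap Z_{H_0}(A_0)$ as the kernel preimage) or an additional argument is required; in either case, the proposed converse direction does not close the gap, and the image direction inherits the same fuzziness about which representatives land in $N_0'$.
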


\begin{corollary} \label{cor:weylK0'} Let $Y$, $Y'$ be nonempty subsets of $A_0$ and suppose that $h\in H_0$ such that $hY(K_0')=Y'(K_0')$. Then there is a $w\in W_0'$ such that $hy(K_0')=wy(K_0')$ for all $y\in Y$.
\end{corollary}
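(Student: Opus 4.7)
The plan is to reduce the claim to Proposition \ref{prop:w*Y} applied to the squared sets $(Y^2,(Y')^2)$, and then upgrade the resulting representative from $N_0^*$ to $N_0'$ by exploiting that the auxiliary element $g$ supplied by that proof lies in the identity component of $(G_0^{Y^2})^\sigma$.

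Since $K_0'\subseteq K_0$, the hypothesis $hYK_0'=Y'K_0'$ forces $hYK_0=Y'K_0$, and pushing through the diffeomorphism $G_0/K_0\simeq X_0$ induced by $\beta$ (together with $\beta(a)=a^2$ for $a\in A_0$) this rephrases as $h*Y^2=(Y')^2$ inside $A_0\subset X_0$, where $Y^2:=\{y^2:y\in Y\}$. Applying Proposition \ref{prop:w*Y} to the pair $(Y^2,(Y')^2)$ produces $g$ in the identity component of $(G_0^{Y^2})^\sigma$ with $w:=hg\in N_0^*$ and $w*y^2=h*y^2$ for every $y\in Y$, equivalently $wyK_0=hyK_0$. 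A short check identifies $(G_0^{Y^2})^\sigma$ with $H_0\cap\bigcap_{y\in Y}yK_0y^{-1}$; since the identity component $K_0^0$ is contained in $K_0'$ (because $K_0'$ is open in $K_0$) the identity component of this subgroup already lies in $\bigcap_{y\in Y}yK_0'y^{-1}$, so the chosen $g$ stabilizes each $yK_0'$, and hence $wyK_0'=hyK_0'$ for every $y\in Y$.

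It remains to check $w\in N_0'$. Lemmas \ref{lem:weylgroup}(2) and \ref{lem:wprime}, together with a short direct argument, yield the characterization $N_0'=N_0^*\cap A_0K_0'$, so the task reduces to showing $w\in A_0K_0'$. Factor $w=s_0k_0$ with $s_0\in A_0$ and $k_0\in K_0$ via Lemma \ref{lem:weylgroup}(2), and note $k_0\in N_{K_0}(A_0)$ because $w$ normalizes $A_0$. Pick any $y_0\in Y$ and let $y_0'\in Y'$ satisfy $hy_0K_0'=y_0'K_0'$; the previous step gives $wy_0K_0'=y_0'K_0'$. Substituting $w=s_0k_0$ and using that $k_0y_0k_0^{-1}\in A_0$ yields $k_0y_0\in A_0K_0'$, and the elementary identity $A_0K_0'\cap K_0=A_0^{(2)}K_0'$ (which holds because $\theta$ inverts $A_0$ and fixes $K_0'$ pointwise) then forces $k_0\in A_0^{(2)}K_0'$, whence $w\in A_0K_0'$.

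The main obstacle is this third step, where one must extract $w\in A_0K_0'$ by combining the $N_0^*$-factorization of $w$ with a single instance of the pointwise hypothesis $wy_0K_0'=y_0'K_0'$; the preceding reduction to the $K_0$-level and the identity-component observation are essentially formal once the setup has been chased through the $\beta$-identification.
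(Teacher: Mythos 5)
Your proof is correct, and the overall strategy matches the paper's: as in the proof of Proposition \ref{prop:w*Y}, produce $g$ in the identity component of $(G_0^{Y^2})^\sigma$ with $w:=hg\in N_0^*$, check that replacing $h$ by $w$ does not change the action on $YK_0'$, and then verify $w\in N_0'=N_0^*\cap A_0K_0'$. The genuine departure is in the middle step. The paper observes that the vector fields on $X_0$ coming from $\liem:=\Lie\bigl((G_0^{Y^2})^\sigma\bigr)$ vanish along $YK_0$, lifts them through the covering $X_0'\to X_0$, and concludes that $g$ fixes $YK_0'$. You instead identify $(G_0^{Y^2})^\sigma$ with $H_0\cap\bigcap_{y\in Y}yK_0y^{-1}$ and note that its identity component, being connected and containing $e$, lands in $\bigcap_y yK_0^0y^{-1}\subset\bigcap_y yK_0'y^{-1}$ since $K_0^0\subset K_0'$ (openness of $K_0'$). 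This is purely group-theoretic, avoids the vector-field lift entirely, and delivers the same conclusion — arguably more cleanly, and it also clarifies that the relevant centralizer is $(G_0^{Y^2})^\sigma$ rather than $(G_0^Y)^\sigma$. Your final step, factoring $w=s_0k_0$ via Lemma \ref{lem:weylgroup}(2) and invoking $A_0K_0'\cap K_0=A_0^{(2)}K_0'$, is a slight detour compared with the paper's direct computation $(y')^{-1}hy=sh\in K_0'$ with $s\in A_0$, but both routes land on $w\in A_0K_0'$ and hence, via Lemma \ref{lem:wprime}, on $w\in N_0'$.
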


\begin{proof}
As in the proof of Proposition \ref{prop:w*Y} we can find $g$ in the identity component of $M:=(G^Y)^\sigma$ such that $hg\in N_0^*$.  We saw that    $YK_0\subset X_0^M$, hence the elements of the Lie algebra $\lie m$, considered as vector fields on $X_0$, vanish on $YK_0$. It follows that the elements of $\liem$, considered as vector fields on $X_0'$, vanish on $Y(K_0')$. Thus  $g$ fixes $Y(K_0')$. Hence we may reduce to the case that $h\in N_0^*$.  For $y\in Y$ we have $hy(K_0')=y'(K_0')$ where $y'\in A_0$. It follows that $(y')\inv hyh\inv h\in  K_0'$ where $s:=(y')\inv hyh\inv \in A_0$. Thus  $h$ induces an element of $W_0'$.
\end{proof}

From  \cite[\S 3]{Matsuki97} we have 
\begin{theorem}\label{thm:A0} 
\begin{enumerate}
\item $G_0=H_0A_0K_0'$.
\item  The inclusion $A_0K_0'\subset X_0'$ induces a homeomorphism $(A_0K_0')/W_0'\simeq X_0'/H_0$.
\end{enumerate}
\end{theorem}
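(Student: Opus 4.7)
Part (1) is a compact Cartan-type decomposition whose proof is the geometric heart of the theorem, and part (2) follows formally from (1) together with Corollary~\ref{cor:weylK0'}.

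For (1), the assertion $G_0 = H_0 A_0 K_0'$ is equivalent to saying that every $H_0$-orbit on $X_0' = G_0/K_0'$ meets the image of $A_0$. I would first treat the case $K_0' = K_0$ (classical Matsuki). Given $x \in X_0$, Theorem~\ref{thm:generalslice} presents a transversal $P_x x$ to the $H_0$-orbit through $x$; since $x \in X_0 \subset G_0$ and $\delta$ commutes with both $\sigma$ and $\theta_x$, the tangent space $\SS_x$ splits as $(\SS_x \cap \lieg_0) \oplus (\SS_x \cap \lier_0)$, and the compact part is the infinitesimal transversal for the $H_0$-action on $X_0$. Choosing a maximal $\delta$-stable $(\sigma,\theta_x)$-split torus $T \subset G^{(x)}$, one checks $\dim T = \dim A_0$ (both equal the rank of the compact symmetric pair). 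The conjugacy of maximal flat subsymmetric spaces in the compact symmetric space $X_0$ under $H_0$ then produces $h \in H_0$ with $\conj(h) T = A_0$, and Proposition~\ref{hproposition} gives $h*x \in A_0$. To pass from $K_0$ to a general open $K_0' \subset K_0$, I would use that $K_0 / K_0'$ is finite together with Lemma~\ref{lem:wprime}: an element of $H_0 A_0 K_0$ can be adjusted into $H_0 A_0 K_0'$ by a factor in $A_0^{(2)} = A_0 \cap K$, tracking through the kernel of $W_0' \to W_0^*$.

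Granting (1), the natural map $\iota : A_0 K_0' \to X_0'/H_0$ is surjective and $W_0'$-invariant (the latter is literally the definition of $N_0'$). For injectivity modulo $W_0'$, suppose $h \in H_0$ satisfies $h \cdot (aK_0') = a'K_0'$ for $a, a' \in A_0$. Then Corollary~\ref{cor:weylK0'} applied with $Y = \{a\}$ and $Y' = \{a'\}$ produces $w \in W_0'$ with $w \cdot (aK_0') = h \cdot (aK_0') = a'K_0'$, so $aK_0'$ and $a'K_0'$ lie in a common $W_0'$-orbit. Thus $\iota$ descends to a continuous bijection $(A_0 K_0')/W_0' \to X_0'/H_0$. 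Compactness upgrades this to a homeomorphism: $A_0$ is a compact torus and $K_0' \subset K_0 \subset U$ is closed in a compact group hence compact, so $A_0 K_0'$ is compact; $W_0'$ is finite by Remark~\ref{rem:w0*finite}; and the target $X_0'/H_0$ is Hausdorff as the quotient of a compact manifold by a compact group action. A continuous bijection from a compact space to a Hausdorff space is automatically a homeomorphism.

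The principal obstacle is the conjugacy-of-maximal-tori statement inside (1): it is the real geometric input of Matsuki's compact decomposition and rests on a min/max argument along a compact $H_0$-orbit rather than on the slice-theoretic or moment-map techniques developed earlier in the paper, which is why the authors simply cite~\cite[\S 3]{Matsuki97}. Once (1) and Corollary~\ref{cor:weylK0'} are in hand, part (2) is a formal diagram chase plus compactness.
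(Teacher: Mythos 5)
The paper provides no proof of this theorem at all: it is cited verbatim from \cite[\S 3]{Matsuki97}, and the only commentary is the remark that Corollary~\ref{cor:weylK0'} strengthens part (2). So your proposal is not a different route from the paper's proof --- it is an attempt to supply a proof where the paper supplies none.

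Your deduction of (2) from (1) plus Corollary~\ref{cor:weylK0'} is correct, and it is essentially what the authors have in mind when they call the corollary a ``strengthening of (2).'' The surjectivity, $W_0'$-invariance, injectivity via $Y=\{a\}$, $Y'=\{a'\}$, and the compactness upgrade to a homeomorphism are all sound; just note that the finiteness of $W_0'$ is not literally Remark~\ref{rem:w0*finite} (which concerns $W_0^*$) but follows from it via Lemma~\ref{lem:wprime}, since $A_0^{(2)}$ is finite.

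Your sketch of (1), however, has a genuine gap precisely at the step you lean on. Having chosen a maximal $\delta$-stable $(\sigma,\theta_x)$-split torus $T$ and an $h\in H_0$ with $\conj(h)T=A_0$, you assert that Proposition~\ref{hproposition} gives $h*x\in A_0$. It does not. Proposition~\ref{hproposition} yields $h*(Tx)=(\conj(h)T)(h*x)=A_0(h*x)$, and since $e\in T$ this only tells you $h*x\in A_0(h*x)$, a tautology. To conclude $h*x\in A_0$ you would need $x$ itself to lie in the $(\sigma,\theta_x)$-split torus, i.e., $x\in T$, but nothing arranges this: $x\in X_0$ satisfies $\theta(x)=x\inv$ but not in general $\sigma(x)=x\inv$, so $x$ need not lie in any $(\sigma,\theta)$-split torus. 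This is exactly why Matsuki's argument cannot be reduced to torus conjugacy along a slice; it instead optimizes a smooth function over the compact orbit $H_0*x$ to produce a point landing in $A_0$. Your reduction from $K_0$ to a general $K_0'$ is also incomplete: passing from $g=hak$ with $k\in K_0$ to $g=h'a'k'$ with $k'\in K_0'$ would require the $K_0'$-coset of $k$ to be represented by an element of $A_0^{(2)}$, which is not automatic and is not what Lemma~\ref{lem:wprime} says; Matsuki proves the $K_0'$ version directly. Your closing paragraph correctly identifies that the geometric heart of (1) is external to the paper's toolkit, but the specific sketch you offer for it is not a valid substitute.
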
 

Note that Corollary \ref{cor:weylK0'} is a strengthening of (2) above.

  \section{Parameterization of the quotient}
 We find it useful to consider the Kempf-Ness set $\M$ with the orders of the compact and noncompact parts reversed. Using \eqref{eqn:Mleft} one easily shows
 
 \begin{proposition} \label{prop:M} We have 
 $$
 \M=\{\exp(\xi)u\mid u\in X_0, \ \xi\in\SS_u\cap\lier_0\}.
 $$
  \end{proposition}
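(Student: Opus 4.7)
The plan is to start from the description \eqref{eqn:Mleft} of $\M$ and push the exponential factor to the left using the identity $u\exp(\zeta)=\exp(\Ad(u)\zeta)\,u$. Setting $\xi:=\Ad(u)\zeta$, I would verify that the three conditions on $\zeta$ appearing in \eqref{eqn:Mleft}, namely that $\zeta\in\lier_0$, $\theta(\zeta)\in\lieq\cap\lier_0$, and $\theta(\zeta)=-\Ad(u)\zeta$, translate into the single condition $\xi\in\SS_u\cap\lier_0$.

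First, since $u\in X_0\subset G_0\subset U=(U\c)^\delta$, the automorphism $\Ad(u)$ commutes with the Cartan involution $\delta$ on $\lieg$ and hence preserves the decomposition $\lieg=\lieg_0\oplus\lier_0$. So $\xi\in\lier_0$ if and only if $\zeta\in\lier_0$. Next, rewriting $\theta(\zeta)=-\Ad(u)\zeta$ as $\theta(\zeta)=-\xi$ and combining with $\theta(\zeta)\in\lieq$ (i.e.\ $\sigma\theta(\zeta)=-\theta(\zeta)$) immediately gives $\sigma(\xi)=-\xi$. Finally, Lemma \ref{theta:lemma} gives $\theta(u)=u\inv$, from which $\theta\circ\Ad(u)=\Ad(u\inv)\circ\theta$; hence
$$
\theta_u(\xi)=\Ad(u)\theta(\Ad(u)\zeta)=\Ad(u)\Ad(u\inv)\theta(\zeta)=\theta(\zeta)=-\xi,
$$
so $\xi\in\SS_u$. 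Thus every element of $\M$ has the desired form.

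For the reverse inclusion I would run the same computation backwards: starting from $\exp(\xi)u$ with $\xi\in\SS_u\cap\lier_0$, set $\zeta:=\Ad(u\inv)\xi$, observe that the intertwining $\theta\circ\Ad(u\inv)=\Ad(u)\circ\theta$ and the relations $\sigma(\xi)=\theta_u(\xi)=-\xi$ imply the three defining conditions of \eqref{eqn:Mleft}. No serious obstacle is anticipated; the only subtle point is keeping track of the difference between $\theta$ and $\theta_u=\conj(u)\circ\theta$, which is handled uniformly by the identity $\theta(u)=u\inv$ from Lemma \ref{theta:lemma}.
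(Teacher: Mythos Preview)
Your proposal is correct and is precisely the routine verification the paper has in mind when it writes ``Using \eqref{eqn:Mleft} one easily shows'' Proposition~\ref{prop:M}. The only point worth noting is that the condition $\zeta\in\lier_0$ is implicit in \eqref{eqn:Mleft} (it comes from the Cartan decomposition of $x\in X$), which you correctly used when checking that $\xi\in\lier_0$.
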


\begin{remark}\label{rem:fibers}
The description of $\M$ shows that $\M$ fibers over $X_0$ with fiber  over $u\in X_0$ the noncompact part $\exp(\SS_u\cap\lier_0)u$ of the transversal $P_uu$.
\end{remark}

\begin{corollary}\label{cor:semisimple}
Let $x\in X$ such that $H*x$ is closed. Then $\tau_x=\theta_x\sigma$ is semisimple so that $G^{(x)}$ is reductive.
\end{corollary}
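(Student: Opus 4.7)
The plan is to use the moment-map machinery of Section \ref{sec:Cartan} to reduce to the case $x\in\M$, and then exploit the very explicit form of points in the Kempf--Ness set. By Theorem \ref{thm:orbitclosure}, the closed orbit $H*x$ meets $\M$; choose $h\in H$ with $y:=h*x\in\M$. Proposition \ref{slicemovecorollary} gives $\conj(h)(G^{(x)})=G^{(y)}$ and conjugates $\tau_x$ to $\tau_y$, so semisimplicity of $\tau_x$ is equivalent to that of $\tau_y$. I may therefore assume $x\in\M$.

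By Proposition \ref{prop:M}, write $x=\exp(\xi)u$ with $u\in X_0\subset G_0$ and $\xi\in\SS_u\cap\lier_0$; in particular $\sigma(\xi)=-\xi$, and the condition $\theta_u(\xi)=-\xi$ coming from $\SS_u$ reads $\Ad(u)\,d\theta(\xi)=-\xi$. At the Lie-algebra level,
$$
d\tau_x \;=\; \Ad(\exp\xi)\circ T, \qquad T:=\Ad(u)\circ d\theta\circ d\sigma.
$$
The crux is the one-line computation
$$
T(\xi)\;=\;\Ad(u)\,d\theta(-\xi)\;=\;-\Ad(u)\,d\theta(\xi)\;=\;\xi.
$$
Since $T$ is a Lie-algebra automorphism fixing $\xi$, it commutes with $\ad\xi$, and therefore with $\Ad(\exp\xi)=\exp(\ad\xi)$.

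It remains to check that each factor is semisimple as a linear operator on $\lieg$. Extending the $(U,\sigma,\theta)$-invariant inner product of Lemma \ref{lem:innerproduct} to the canonical positive-definite form on $\lieg$ adapted to $\delta$ (so that $\lieg_0\perp\lier_0$), each of $\Ad(u)$, $d\theta$, $d\sigma$ is orthogonal, so $T$ is orthogonal and hence semisimple; on the other hand $\ad\xi$ with $\xi\in\lier_0$ is self-adjoint, so $\Ad(\exp\xi)$ is positive-definite symmetric and thus semisimple. A product of commuting semisimple operators is semisimple, so $d\tau_x$ is semisimple. Hence $\tau_x$ is a semisimple automorphism of $G$, and it is standard that the fixed-point subgroup of a semisimple automorphism of a reductive group is reductive, giving that $G^{(x)}=G^{\tau_x}$ is reductive.

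The main conceptual hurdle is spotting the identity $T(\xi)=\xi$, which is precisely what forces the two factors of $d\tau_x$ to commute; after that, the simultaneous compatibility of the inner product with $\delta$, $\sigma$ and $\theta$ arranged in Section \ref{sec:Cartan} makes the orthogonal/self-adjoint split and the remaining verifications routine.
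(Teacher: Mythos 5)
Your proof is correct and follows essentially the same strategy as the paper's: reduce to $x\in\M$, factor $\tau_x=\conj(\exp\xi)\circ\tau_u$, observe that $\tau_u(\xi)=\xi$ forces the two factors to commute, and verify that each factor is semisimple via the invariant inner product. The only cosmetic difference is that you spell out the orthogonality/self-adjointness bookkeeping at the Lie-algebra level in a bit more detail, whereas the paper condenses this to citing Lemma \ref{lem:innerproduct} and the observation that $i\xi\in\lieu$.
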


\begin{proof} First assume that $x\in\M$, so that $x= \exp(\xi)u$ for $u\in X_0$ and $\xi\in\SS_u\cap\lier_0$. It follows from Lemma \ref{lem:innerproduct} that $\tau_u$ is semisimple, and since $\xi\in\SS_u$, $\tau_u(\xi)=\xi$. Since $i\xi\in\lieu$, $\ad i\xi$ and $\ad\xi$ are semisimple endomorphisms of $\lieu\c$, hence $\conj(\exp(\xi))$ is a semisimple automorphism of $U\c$ commuting with $\tau_u$.  Thus $\tau_x=\conj(\exp(\xi))\tau_u$ is semisimple.

Now suppose that $h\in H$ and $x\in\M$. Then 
one computes that $\tau_{h*x}=\conj(h)\tau_x\conj(h\inv)$, so $\tau_{h*x}$ is semisimple.
\end{proof}

 \begin{lemma}\label{lem:Au} Let $u\in A_0$ and let $A$ be a $\delta$-stable $(\sigma,\theta_u)$-split torus. Then $Au\subset\M$.
 \end{lemma}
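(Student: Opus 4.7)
The plan is to invoke the parameterization
\[
\M = \{\exp(\xi)u' \mid u' \in X_0,\ \xi \in \SS_{u'} \cap \lier_0\}
\]
from Proposition \ref{prop:M}. Using $\delta$-stability of $A$, I would split $\liea = (\liea \cap \lieg_0) \oplus (\liea \cap \lier_0)$ and factor any $a \in A$ as $a = a_c\exp(\xi)$ with $a_c \in A \cap U$ and $\xi \in \liea \cap \lier_0$. Since $A$ is abelian this gives $au = \exp(\xi)(a_cu)$, so it suffices to prove that $u' := a_c u$ belongs to $X_0$ and that $\xi \in \SS_{u'}\cap\lier_0$.

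For $u' \in X_0$ I would use the identification $X_0 = X \cap U$ proved earlier. The element $a_c u$ is clearly in $U$, and the $(\sigma,\theta_u)$-split property applied to $a_c \in A$ gives $u\theta(a_c)u\inv = a_c\inv$; combined with $\theta(u) = u\inv$ this yields $\theta(a_cu) = (a_cu)\inv$, so $a_cu \in P$. To upgrade this from $P$ to $X$, I would join $a_c$ to $e$ by a path inside the connected torus $A \cap U$; translation by $u$ produces a path from $u$ to $a_cu$ in $(A\cap U)u$, and the same calculation shows every point of this path lies in $P$. Since $X$ is a union of components of $P$ and $u \in X_0 \subset X$, the entire path lies in $X$, whence $a_cu \in X \cap U = X_0$.

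For $\xi \in \SS_{u'} \cap \lier_0$ the inclusion $\xi \in \lier_0$ holds by construction, and $\sigma(\xi) = -\xi$ follows by differentiating the $\sigma$-split property of $A$. Differentiating $\theta_u(a) = a\inv$ on $A$ gives $\Ad(u)\theta(\xi) = -\xi$, so
\[
\theta_{u'}(\xi) = \Ad(a_cu)\theta(\xi) = \Ad(a_c)\bigl(\Ad(u)\theta(\xi)\bigr) = -\Ad(a_c)\xi = -\xi,
\]
the last equality because $a_c$ and $\xi$ both live in the abelian algebra $\liea$. Proposition \ref{prop:M} then presents $au = \exp(\xi)u'$ as a point of $\M$. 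I expect the only delicate step to be distinguishing membership in $X$ from membership in $P$, which is handled via connectedness of $A \cap U$; everything else is a careful bookkeeping exercise separating $\theta$ from $\theta_u$.
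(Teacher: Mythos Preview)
Your proof is correct and follows essentially the same route as the paper: decompose $A$ into its $\delta$-fixed and $\delta$-split parts (the paper writes $A=BC$ with $C$ compact and $B=\exp(\liea\cap\lier_0)$, which is your $a_c\exp(\xi)$), then invoke Proposition~\ref{prop:M}. You are in fact more careful than the paper on one point: you explicitly argue via connectedness of $A\cap U$ that $a_cu$ lies in $X_0$ rather than merely in $P\cap U$, a step the paper leaves implicit.
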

 
 \begin{proof}
 We may write $A=BC$ where $B$ is a $\delta$-split torus and $C$ is $\delta$-fixed, both tori being $\sigma$-split. Let $b=\exp(Z)\in B$ and $c\in C$. Since $bc$ is $\theta_u$-split, it follows that both $b$ and $c$ are $\theta_u$-split, so that $Z$ is $\theta_{cu}$-split. Hence $Z\in\SS_{cu}\cap \lier_0$ and $bcu\in\M$.
 \end{proof}
 
 \begin{remark} If $Z\in\lie r_0$, then $\ad Z$ acts on $\lieg$ with real eigenvalues, hence an element of $\lieg$ is fixed  by $\Ad(\exp(Z))$ if and only if it is annihilated by $\ad Z$.
\end{remark}

 We now give a small subset of $\M$ mapping onto the quotient $\quot X{H}$.
 
 \begin{theorem}\label{thm:main}
 There are points $u_1,\dots,u_s\in  A_0$ and   maximal $(\sigma,\theta_{u_i})$-split $\delta$-stable tori $A_i$ 
 such that every closed $H$-orbit in $X$ intersects $\cup A_iu_i$. Moreover, every principal orbit intersects exactly one of the $A_iu_i$.
 \end{theorem}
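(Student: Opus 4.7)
The plan is to first produce, from any closed $H$-orbit, a point in $\M$ whose ``compact part'' lies in $A_0$; then to build a maximal $(\sigma,\theta_u)$-split $\delta$-stable torus through that point; then to use a finite stratification of $A_0$ to cut the resulting family down to finitely many representatives; and finally to deduce uniqueness on principal orbits from the slice structure.

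Given a closed $H$-orbit $H*x$, Theorem \ref{thm:homeom} guarantees $H*x\cap\M\neq\emptyset$, and by Proposition \ref{prop:M} any point in this intersection can be written as $\exp(\xi)u$ with $u\in X_0$ and $\xi\in\SS_u\cap\lier_0$. Theorem \ref{thm:A0}(1) yields $X_0=H_0*A_0$, so after translating by some $h\in H_0$ and using the equivariance in Proposition \ref{slicemovecorollary} to transport $\xi$, I may assume $u\in A_0$. For such a pair $(u,\xi)$ the element $\xi$ is inverted by all three of $\delta,\sigma,\theta_u$, and since $u\in G_0$ these involutions commute pairwise; by Corollary \ref{cor:semisimple} the group $G^{(u)}$ is reductive, so standard structure theory for the real reductive symmetric pair $(G^{(u)},\sigma,\theta_u)$ furnishes a maximal $\delta$-stable $(\sigma,\theta_u)$-split torus $A$ containing $\exp(\xi)$. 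Then $\exp(\xi)u\in Au\subset\M$ by Lemma \ref{lem:Au}.

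To extract a finite list, I stratify $A_0$ by the isomorphism type of $(G^{(u)},\sigma,\theta_u)$: because $\Ad(A_0)$ decomposes $\lieg$ under only finitely many characters, and the fixed-point structure of $\tau_u=\conj(u)\tau$ on $\lieg$ depends only on the values of these characters at $u$, the stratification is finite and locally closed. Over each stratum the associated maximal $(\sigma,\theta_u)$-split $\delta$-stable tori form a single $H$-conjugacy class; choosing one $(u_i,A_i)$ per stratum and retaining one representative per $H$-equivalence class produces a finite pairwise non-equivalent list with $\bigcup_iA_iu_i$ meeting every closed orbit by the previous step.

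For uniqueness on principal orbits, suppose $x\in A_iu_i$ is principal. Writing $x=au_i$ with $a\in A_i$ one has $\theta_x=\conj(a)\theta_{u_i}$, which inverts $A_i$, so $A_i$ is $(\sigma,\theta_x)$-split; by Remark \ref{rem:principal}, $P_x$ is the unique maximal $(\sigma,\theta_x)$-split torus through $e$, and the principal hypothesis forces $A_i=P_x$ by comparing dimensions. If $h*x\in A_ju_j$ for some $h\in H$, the same identification at $h*x$ together with Proposition \ref{slicemovecorollary} gives $A_j=\conj(h)(A_i)$, whence $h*(A_iu_i)=A_ju_j$ by Proposition \ref{hproposition}; pairwise non-equivalence of the representatives then forces $i=j$. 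The main obstacle is the stratification step: verifying finiteness of the isomorphism types and the single-$H$-orbit property of the maximal tori over each stratum, while the remaining assertions are formal consequences of the results already established.
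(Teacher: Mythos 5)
The key gap is in your step producing finitely many representatives. You assert that "over each stratum the associated maximal $(\sigma,\theta_u)$-split $\delta$-stable tori form a single $H$-conjugacy class," but this is false, and the $\SL(2,\R)$ example in the paper (Example~\ref{sl2example:part1}) shows it. There the stratum $\{v\}$ has $\theta_v=\sigma$, and both $A_0=\SO(2)$ (compact) and $A=K^0$ (noncompact) are maximal $(\sigma,\theta_v)$-split, $\delta$-stable tori through $v$; they are plainly not conjugate. So choosing "one $(u_i,A_i)$ per stratum" is not well-defined, and with it the argument that $\bigcup_iA_iu_i$ meets every closed orbit collapses: at a non-principal point there are several inequivalent maximal tori through the point, and nothing in your construction guarantees the chosen representative at a stratum picks up every point of the fiber $\exp(\SS_u\cap\lier_0)u$.

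The paper sidesteps this precisely by working with principal points first, where the transversal $P_x$ is forced to be the unique maximal $(\sigma,\theta_x)$-split torus through $e$, and then extends to all of $\M$ by a density-plus-closedness argument: $H_0*\bigcup_iA_iu_i$ is closed (since $H_0$ is compact and each $A_iu_i$ is closed) and contains the dense set of principal points, hence equals $\M$. Finiteness of the $A_iu_i$ is then deduced not from a stratification of $A_0$ but from the observation that the principal points of $\M/H_0$ form a semialgebraic set with finitely many components, each lying in the image of a single $A_iu_i$. If you want to salvage a stratification-based finiteness argument, the correct statement in the paper is Proposition~\ref{prop:Aunique}, which asserts uniqueness of $Au$ (up to equivalence) attached to a stratum $S$ only under the extra hypothesis that $\dim(A\cap A_0)=\dim S$; that extra hypothesis is exactly what fails for $A_0$ at the point stratum $\{v\}$ in the $\SL_2$ example. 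Your step~2 (constructing, from any $\exp(\xi)u\in\M$ with $u\in A_0$, a $\delta$-stable maximal $(\sigma,\theta_u)$-split torus through $\exp(\xi)$ and hence a translated torus $Au\subset\M$ containing the point) is fine, though you should drop the claim that $\sigma$ and $\theta_u$ commute as automorphisms of $G$ -- they need not, but they do coincide on $G^{(u)}$, which is all that is used. Your uniqueness argument on principal orbits is essentially the paper's and is correct.
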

 
 \begin{proof}
 Let $x\in\M$, so that $x=\exp(Z)u$ where $u\in X_0$ and $Z\in\SS_u\cap\lie r_0$. Theorem \ref{thm:A0} allows us to assume that   $u\in A_0$. 
Now the transversal at $u$ is $P_uu$ where $P_u$ is the symmetric space associated to $G^{(u)}$ and $\sigma$. We have the transversal $Q_xx$ at $x$ for the action of $H_u$ on $P_uu$. Then 
\begin{align*}
T_e(Q_x)&=\{Y\in \lieg^{(u)}\mid\sigma(Y)=\theta_{x}(Y)=-Y\} \\&=\{Y\in\SS_u\mid \Ad(\exp(Z))Y=Y\}=\{Y\in \SS_u\mid [Z,Y]=0\}.
\end{align*}
 If $x$ is principal, then $\exp(Z)$ is principal for the action of $H_u$ on $P_u$, hence  $Q_x$ has to be a maximal ($\sigma,\theta_u$)-split  torus $A$ in $G$. By construction,  $A$ is $\delta$-stable, hence $Au\subset\M$. Now we have $\exp(Z)\in A$ and $x\in Au$. We have shown that the principal orbits intersect a union $\cup A_iu_i$, but we do not yet know that finitely many $A_iu_i$ suffice.
 
 Now suppose that we have $A_1u_1$ and $A_2u_2$ where $u_1$, $u_2\in A_0$ and the $A_i$   are $\delta$-stable maximal $(\sigma,\theta_{u_i})$-split tori in $X$, $i=1$, $2$. Suppose that there are principal points  $x_i:=a_iu_i$ and an $h\in H_0$ such that $h*x_1=x_2$.  Since $x_1$ is principal,  $P_{x_1}$   is a maximal $(\sigma,\theta_{x_1})$-split torus. But $A_1$ is $\sigma$-split and $\theta_{x_1}$-split (since $a_1\in A_1$), so  $P_{x_1}=A_1$ and for the same reason, $P_{x_2}=A_2$. Then Proposition \ref{hproposition} shows that $\conj(h) A_1=A_2$ and hence $h*A_1u_1=A_2u_2$.
 Thus the principal points of $\M/H_0$ are the image of the disjoint union of the principal points of sets $A_iu_i$. Moreover, the principal points of the $A_iu_i$ have open image in $\M/H_0$, so that each irreducible component of the principal points of $\M/H_0$ lies in the image of a single $A_iu_i$. Now the principal points of $\M/H_0$ are a semialgebraic subset of $\M/H_0$, so that there are   finitely many components. Hence we only need a finite number of $A_iu_i$.  Since the set of principal orbits is open and dense in $X$, the same is true for $\M$. It follows that $H_0*\cup_i A_iu_i=\M$.   Hence every closed $H$-orbit in $X$ contains a point of $\cup_i A_iu_i$. 
 \end{proof}

 \begin{definition}
 Let $A_i$ be maximal $(\sigma,\theta_{u_i})$-split tori where $u_i\in A_0$, $i=1$, $2$. We say that $A_1u_1$ and $A_2u_2$ are \emph{equivalent\/}  if there is an $h\in H$ such that $h*A_1u_1=A_2u_2$.  Equivalently, $\conj(h) A_1=A_2$ and $h*u_1\in A_2u_2$. We define the Weyl group $W^*_{H}(A_1u_1)$ to be $N^*_{H}(A_1u_1)/Z^*_{H}(A_1u_1)$ where the $*$ just reinforces the fact that $H$ is acting via $*$ and not by conjugation.
 \end{definition}
 
 \begin{remark}\label{rem:usualWeyl}
 If $u=e$, then $W^*_{H}(Au)=W^*_{H}(A)$ is the usual (twisted) Weyl group of $A$. 
\end{remark}
 
We saw above that whenever $A_1u_1$ and $A_2u_2$ have images in $\quot X{H}$ with a common principal point, then they are equivalent by an element of $H_0$. The self equivalences of $A_1u_1$ are just $W^*_{H}(A_1u_1)$.

  \begin{example}
  \label{sl2example:part1}
   Here $U\c=\SL(2,\C)$, $U=\SU(2,\C)$   and $G=\SL_2(\R)$. We have the involution $\theta$ which is conjugation with $\sm 0  1  1   0$ and the involution $\sigma$ which is conjugation with $\sm 1 0 0 {-1}$. Then $\tau=\theta\sigma$ is conjugation with $v:=\sm 0 {-1}1     0$ and $\sigma\theta$ is conjugation with $-v$, so that  $\sigma$ and $\theta$ commute. The group $H$ is $\{\sm \lambda 0 0 {\lambda\inv}\mid \lambda\in\R^*\}$, $\lieq=\{\sm 0bc0\colon b$, $c\in\R\} $, $K=\{\sm a b b a\mid a,\ b\in\R,\ a^2-b^2=1\}$ and $\liep=\{\sm ab{-b}{-a}\mid a$, $b\in\R\}$. The Cartan involution $\delta$ of $U\c$ is conjugate inverse transpose,   $\delta$ commutes with $\sigma$ and $\theta$ and $\lier_0=\{\sm abb{-a}\mid a$, $b\in\R\}$. We have $G_0=G\cap U=\SO(2,\R)=A_0=X_0$ and $X:=P_\theta(G)=\{\sm a b {-b} d\mid ad+b^2=1\}$.  The element $\sm \lambda 0 0 {\lambda\inv}\in H$ sends $\sm a b {-b} d$ to $\sm{\lambda^2a} b{-b}{\lambda^{-2}d}$. Thus $H_0=\{\pm I\}$ acts trivially on $X$ so that $\quot X{H}\simeq \M$ and the orbits of $H$ are connected.   
   	
	Use coordinates $x=b$, $y=(a+d)/2$ and $z=(a-d)/2$ on $X$. In these coordinates,  $X$ is just the hyperboloid in 3-space given by the equation $x^2+y^2=1+z^2$. The action of $H$ fixes $x$ and on $y$ and $z$ it is given by matrices $\{\sm a b b a\mid a^2-b^2=1$, $a>0\}$. Now consider the intersection $X\cap\{x=c\}$ for $c$ fixed.
\begin{enumerate}
\item If $c^2\neq 1$, then $X\cap\{x=c\}$ is the hyperbola $y^2-z^2=1-c^2$, each of whose branches is an $H$-orbit. If $1-c^2>0$ (resp.\ $1-c^2<0$), then the $H$-orbit contains a unique  point where $z=0$ (resp.\ $y=0$).  
\item If $c^2=1$, then $X\cap \{x=c\}$ is the union of the two lines $\{(x,y,z)=(c,s,s)\}$ and $\{(x,y,z)=(c,s,-s)\}$ where $s\in \R$. There are two $H$-fixed points $(\pm 1,0,0)$ and eight non-closed $H$-orbits $\{(\pm 1,\pm s,\pm s)\mid s>0\}$.
\end{enumerate}

   Set $M:=\{(x,y,z)\in X\mid y=0$ or $z=0\}$ which is the union of $A_0=X_0$ and the hyperbola $\{x^2-z^2=1$, $y=0\}$. From the above,   each closed $H$-orbit intersects $M$ in a unique point.  We show that $M=\M$.
      
   Let $u\in A_0$. From Remark \ref{rem:fibers} the fiber of $\M$ over $u$ is $\exp(\SS_u\cap\lier_0)u$.  If $\sm 0 ab0\in\lieq$ lies in $\SS_u$, then it is fixed by $\tau_u$ which is conjugation with $uv$. For $u\neq\pm v$, conjugation by $uv$ fixes only the Lie algebra $\{\sm 0a{-a}0\}$ of $A_0$. Thus $\SS_u\cap \lier_0=\{0\}$ and the fiber of $\M$ over $u$ is just the point  $u$. Now suppose that $u=\pm v$. Then $uv$ acts trivially on $\lieq$, so that $\SS_u\cap\lier_0=\{\sm 0aa0\}$ whose exponential is the torus $A:=K^0=\{\sm abba\mid a^2-b^2=1 \text { and } a>0\}$. Thus the fiber of $\M$ above each point $\pm v$ is the translated torus $A(\pm v)$ which, viewed in our coordinates on $X$, is the branch of $\{x^2-z^2=1$, $y=0\}$ through $v$. Hence $M=\M$.  Note that the principal points in $\M$ are just the complement of $\pm v$. The (translated) tori of Theorem \ref{thm:main} are $A_0$, $Av$ and $A(-v)$, each with trivial Weyl group.
\end{example}

\subsection{Non-closed orbits} It is possible to classify all the orbits of $H$ on $X$, not just the closed orbits. Let $H*y$ be a non-closed orbit, and let $x$ be a closed orbit in the closure of $H*y$ (see Theorems \ref{thm:orbitclosure} and \ref{thm:homeom}). We may assume that $x\in\M$. Then $H*y$ has to intersect the transversal $P_xx$, so we may assume that $y\in P_xx$. Recall that $G^{(x)}$ is reductive (Corollary \ref{cor:semisimple}). Now it is well-known that the non-closed orbits for the action of $H_x$ on $P_x$ are the nontrivial unipotent orbits, i.e., those consisting of unipotent elements of $G^{(x)}$  \cite{Rich82a}. But the unipotent elements are those of the form $\exp(n)$ where $n\in\SS_x$ is a nilpotent element of the semisimple part of $\lieg^{(x)}$. There are only finitely many $H_x$-conjugacy classes of such nilpotent elements, so one is reduced to a computation involving symmetric space representations. Hence one can compute all the $H$-orbits on $X$. In Example \ref{sl2example:part1} above, the only points $x\in\M$ where the slice representation of $H_x$ is nontrivial are the non-principal points $v$ and $-v$. At $v$, we have $G^{(v)}=G$ and $\theta_v=\sigma$. Thus $\SS_v=\{\sm 0 a b 0\mid a,\ b\in\R\}$. The representation of $H_v=H\simeq\R^*$ on $\SS_v$ has weights $2$ and $-2$ of multiplicity  one, and the corresponding weight spaces are generated by $n_2:=\sm 0 1 0 0 $ and $n_{-2}:=\sm 0 0 1 0$, respectively. There are two nontrivial $H$-orbits in each of $\R\cdot n_{\pm 2}$. Thus there are four non-closed orbits in $X$ with closure containing $v$, as we saw above. Similarly, there are four non-closed orbits whose closures contain $-v$.

\section{Some results on  Lie algebras}

In this section only, $G$ will denote a general real reductive Lie group with Cartan involution $\delta$ commuting with an involution $\sigma$. We have the Cartan decomposition $\lieg=\lieg_0\oplus\lier_0$ where $G_0=G^\delta$ and we have the decomposition $\lieg=\lieh\oplus\lieq$ relative to $\sigma$. The groups $H$ and $H_0$ are defined as before.

   \begin{proposition}\label{prop:a}  Let $\lieb_0\subset\lieg_0$ be a  maximal $\sigma$-split commutative subalgebra.   Let $\lieb_1\subset \lier_0\cap\lieq$ be maximal abelian in the centralizer of $\lieb_0$. Then
      \begin{enumerate}
      \item $\lieb:=\lieb_0\oplus\lieb_1$ is maximal abelian in $\lieq$.
\item  Let $\lieb_1'$ be another choice of $\lieb_1$. Then $\lieb_1$ and $\lieb_1'$ are conjugate by an element of the connected centralizer of  $\lieb_0$ in $H_0$.
    \end{enumerate}
    \end{proposition}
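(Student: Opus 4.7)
The plan is to reduce both parts to a study of the reductive centralizer $\lie z := Z_\lieg(\lieb_0)$, which is $\delta$- and $\sigma$-stable (since $\lieb_0$ is) and contains $\lieb_0$ in its center. Applying the Cartan decomposition and the $\sigma$-eigenspace decomposition to $\lie z$ gives
\[
\lie z \cap \lieq \;=\; (\lie z \cap \lieq \cap \lieg_0) \;\oplus\; (\lie z \cap \lieq \cap \lier_0).
\]
The first summand coincides with $\lieb_0$: any $x$ there centralizes $\lieb_0$ and lies in $\lieq \cap \lieg_0$, so $\lieb_0 + \R x$ is an abelian $\sigma$-split subalgebra of $\lieg_0$, forcing $x \in \lieb_0$ by maximality of $\lieb_0$.

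For part (1), $\lieb = \lieb_0 \oplus \lieb_1$ is abelian in $\lieq$ because each summand is abelian and $\lieb_1$ centralizes $\lieb_0$. To prove maximality, suppose $\lieb'$ is an abelian subalgebra of $\lieq$ with $\lieb' \supset \lieb$. Since $\lieb' \supset \lieb_0$, we have $\lieb' \subset \lie z$, and each $x \in \lieb'$ decomposes as $x = x_0 + x_1$ with $x_0 \in \lieb_0$ and $x_1 \in \lie z \cap \lieq \cap \lier_0$. The abelianness of $\lieb'$ gives $[x, \lieb_1] = 0$; combined with $[x_0, \lieb_1] = 0$, this yields $[x_1, \lieb_1] = 0$. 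Hence $\lieb_1 + \R x_1$ is abelian in $\lie z \cap \lieq \cap \lier_0$, and maximality of $\lieb_1$ forces $x_1 \in \lieb_1$, so $x \in \lieb$ and $\lieb' = \lieb$.

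For part (2), let $L = Z_G(\lieb_0)$, a $\delta$- and $\sigma$-stable reductive subgroup with Lie algebra $\lie z$. Then $(L,\sigma|_L)$ is a reductive symmetric pair with commuting Cartan involution $\delta|_L$, and $\lieb_1, \lieb_1'$ are two maximal abelian subspaces of its noncompact $\sigma$-split part $\lie z \cap \lieq \cap \lier_0$. The classical conjugacy theorem for such Cartan subspaces (proved by reducing to the compact group acting on the $\sigma$-anti-invariants of a maximal compactly embedded subalgebra, or equivalently by the standard $KAK$-type argument for reductive symmetric pairs) asserts that any two such subspaces are conjugate under the identity component of $(L^\sigma)^\delta$, which is exactly $Z_{H_0}(\lieb_0)^0$.

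The main obstacle is part (2): one needs the relative form of the conjugacy theorem, applied inside $\lie z$ rather than $\lieg$. This is unproblematic because $\lie z$ itself carries the structure of a reductive symmetric pair with compatible Cartan involution (the restrictions of $\sigma$ and $\delta$), so the usual proof applies verbatim. The slightly delicate point is verifying that the relevant conjugating element can be chosen in the identity component, which follows from the connectedness of the compact group acting transitively on the space of such maximal abelian subspaces.
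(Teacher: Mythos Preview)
Your argument is correct and follows the same opening move as the paper: pass to the centralizer $\lie z=Z_{\lieg}(\lieb_0)$ and use the key identity $\lie z\cap\lieq\cap\lieg_0=\lieb_0$. The difference lies in how part~(2) is finished. You invoke, as a black box, the two-involution conjugacy theorem (maximal abelian subspaces of $\lieq\cap\lier_0$ in a reductive symmetric pair are conjugate under the identity component of $H\cap K$). The paper instead pushes the reduction one step further: after passing to the centralizer and quotienting by its center, the identity $\lie z\cap\lieq\cap\lieg_0=\lieb_0$ becomes $\lieg_0\cap\lieq=0$, so $\lieg_0\subset\lieh$ and $G_0=H_0$, and $\lieq\subset\lier_0$. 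Then $\lieb_1,\lieb_1'$ are maximal abelian in $\lieq$, one extends them to maximal abelian subspaces of $\lier_0$, and applies only the \emph{one}-involution (Riemannian) conjugacy theorem; the conjugating element lies in $(G_0)^0=(H_0)^0$ and preserves $\lieq$, hence carries $\lieb_1$ to $\lieb_1'$. Your route is shorter but assumes a stronger input; the paper's route is more self-contained and in fact yields a proof of the very two-involution conjugacy statement you quote (take $\lieb_0=0$).
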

      
   \begin{proof} Part (1) is obvious. For (2) 
  replace $G$ by the connected centralizer of $\lieb_0$ and then divide by the center. This reduces us to the case that 
$ \lieg_0\cap\lieq=\{0\}$, so that    $\lieg_0\subset \lieh$ and $G_0=H_0$.  Moreover, $\lieb_1$ and $\lieb_1'$ are maximal abelian subspaces of $\lieq\subset\lier_0$. Now $\lieb_1$  and $\lieb_1'$ can be extended to   maximal abelian subspaces of $\lier_0$. But all such subspaces are $(G_0)^0$-conjugate. Since $G_0=H_0$ preserves $\lieq$, we see that $\lieb_1$ and $\lieb_1'$ are $(H_0)^0$-conjugate.
   \end{proof}
   
   \begin{corollary}\label{cor:bconja}
   \begin{enumerate}
\item   Let $\lieb_1$ be as above and set $B_0=\exp(\lieb_0)$. Then $B:=B_0\exp(\lieb_1)$ is a  maximal   $\sigma$-split torus in $G$.
\item Let $B'$ be a maximal $\sigma$-split torus in $G$ such that the maximal compact subgroup of $B'$ has the same dimension as $B_0$. Then $B'$ is $(H_0)^0$-conjugate to $B$.
\end{enumerate}
\end{corollary}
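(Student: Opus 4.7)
The plan is to deduce (1) directly from Proposition \ref{prop:a}(1), and to prove (2) via a three-stage conjugation, each stage realized by an element of $(H_0)^0$.

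For (1), I would observe that $\lieb := \lieb_0\oplus\lieb_1 \subset \lieg_0\oplus\lier_0$ consists entirely of semisimple elements, since $\ad X$ is skew-adjoint for $X\in\lieg_0$ and self-adjoint for $X\in\lier_0$ with respect to any $\delta$-invariant bilinear form. Hence the connected abelian subgroup $B = B_0\exp(\lieb_1)$ is a torus in the sense used in \S\ref{sec:transversals}, and $\lieb\subset\lieq$ makes it $\sigma$-split. If some $\sigma$-split torus strictly contained $B$, its Lie algebra would strictly enlarge $\lieb$ inside $\lieq$, contradicting the maximality guaranteed by Proposition \ref{prop:a}(1).

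For (2), Stage 1 is to bring $B'$ into $\delta$-stable position by an $(H_0)^0$-conjugation, so that $\Lie B' = \lieb'_0 \oplus \lieb'_1$ with $\lieb'_0 \subset \lieg_0\cap\lieq$ and $\lieb'_1 \subset \lier_0\cap\lieq$. This rests on standard structure theory: the maximal compact subgroup $B'_c$ of $B'$ lies in some maximal compact subgroup of $H$, these are unique up to $(H_0)^0$-conjugacy, and the noncompact part can then be aligned inside the $\delta$-stable reductive centralizer of $B'_c$. Stage 2 invokes the hypothesis $\dim B'_c = \dim B_0$: this forces $\dim\lieb'_0 = \dim\lieb_0$, and since $\lieb_0$ is already a maximal $\sigma$-split commutative subalgebra of $\lieg_0$ by the hypothesis of Proposition \ref{prop:a}, so is $\lieb'_0$. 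The classical conjugacy theorem for Cartan subspaces of the compact symmetric pair $(G_0,H_0)$ (Helgason, Ch.\ VII) then lets me further $(H_0)^0$-conjugate to arrange $\lieb'_0 = \lieb_0$, while carrying $\lieb'_1$ to some abelian subspace of $\lier_0\cap\lieq$ centralizing $\lieb_0$. Stage 3 applies Proposition \ref{prop:a}(2): by maximality of $B'$ as a $\sigma$-split torus, the new $\lieb'_1$ is maximal abelian in the centralizer of $\lieb_0$ there, so it is "another choice" of $\lieb_1$; the proposition supplies an element of the connected centralizer of $\lieb_0$ in $H_0$---automatically in $(H_0)^0$ by path-connectedness to the identity through the centralizer---carrying $\lieb'_1$ to $\lieb_1$, and hence $B'$ to $B$.

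The main obstacle is Stage 1: the $\delta$-stabilization of a $\sigma$-split torus by an $(H_0)^0$-conjugation is not explicit in the excerpt and must be invoked as a separate structural fact about real reductive groups with an involution commuting with a Cartan involution. Stages 2 and 3 are then clean applications of well-known conjugacy results---for Cartan subspaces in compact symmetric spaces, and for the noncompact piece via Proposition \ref{prop:a}(2) respectively---and require no new ideas.
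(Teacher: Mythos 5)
Your proposal is correct and follows essentially the same route as the paper: part (1) is an unpacking of Proposition~\ref{prop:a}(1), and part (2) proceeds by first $\delta$-stabilizing $B'$ (the paper simply cites the remark after Lemma~5 of \={O}shima--Matsuki for this, whereas you sketch the underlying structure argument), then matching the compact parts by the conjugacy of maximal compact $\sigma$-split tori, and finally applying Proposition~\ref{prop:a}(2) to the noncompact parts. The one point you flag as a possible obstacle---the $(H_0)^0$-conjugation to a $\delta$-stable position---is exactly the step the paper outsources to the cited reference, so your outline matches the intended proof step for step.
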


\begin{proof}
Part (1) is immediate from Proposition \ref{prop:a}. As for (2), we may assume that $B'$ is $\delta$-stable \cite[Remark after Lemma 5]{Oshima-Matsuki80}. Then we may decompose $B'$ as $(B'\cap U)\times(B'\cap R_0)$ where $R_0=\exp(\lier_0)$. Since $B'\cap U$ has the same dimension as $B_0$, we know that $B'\cap U$ and $B_0$ are $(H_0)^0$-conjugate. Thus we may assume that $B'\cap U=B_0$. By  Proposition \ref{prop:a} there is an element of $(H_0)^0$ which centralizes $B_0$ and conjugates $B'\cap R_0$ into $B\cap R_0$.
\end{proof}

 If $\lie m$ and $ \lie n$ are subalgebras of $\lieg$, we denote by $\liem^\lien$ the centralizer of $\lien$ in $\liem$. 

\begin{lemma} \label{centerlemma}   The subalgebra $\lieq^\lieh$ lies in the center of $\lieg$.
\end{lemma}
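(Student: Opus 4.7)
My plan is to reduce to the semisimple case and then exploit the $\delta$-decomposition together with the standard fact that a square-zero operator which is either self-adjoint or skew-adjoint with respect to a positive-definite inner product must vanish. The reduction is immediate: the decomposition $\lieg = Z(\lieg) \oplus [\lieg,\lieg]$ is $\sigma$-stable, so any $X \in \lieq^\lieh$ splits into a central summand (which lies in $Z(\lieg)$ for free) and a semisimple summand (which still centralizes $\lieh$). Hence it suffices to prove $\lieq^\lieh = 0$ when $\lieg$ is semisimple.

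Assuming $\lieg$ semisimple, I would use the Cartan involution $\delta$ (which commutes with $\sigma$) to write $X \in \lieq^\lieh$ as $X_0 + X_1$ with $X_0 \in \lieq \cap \lieg_0$ and $X_1 \in \lieq \cap \lier_0$, and then verify that $X_0$ and $X_1$ each separately centralize $\lieh$. This is the only spot requiring a small argument: for any $Y \in \lieh$, split $Y = Y_0 + Y_1$ along $\lieh = (\lieh \cap \lieg_0) \oplus (\lieh \cap \lier_0)$ and match $\delta$-eigencomponents in the identities $[X, Y_j] = 0$; since $[X_0, Y_0]$ and $[X_1, Y_1]$ land in $\lieg_0$ while $[X_0, Y_1]$ and $[X_1, Y_0]$ land in $\lier_0$, all four brackets must vanish individually.

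With $X_0, X_1 \in \lieq^\lieh$ in hand, the decisive observation is that $\ad X_i$ sends $\lieq$ into $\lieh$ (since $[\lieq,\lieq] \subset \lieh$) and kills $\lieh$, so $(\ad X_i)^2 = 0$ on all of $\lieg$. Now equip $\lieg$ with the positive-definite inner product $B_\theta(U,V) := -B(U, \delta V)$, where $B$ is the Killing form; the standard identity $(\ad W)^* = -\ad(\delta W)$ shows that $\ad X_0$ is skew-adjoint and $\ad X_1$ is self-adjoint. In either case, the computation $B_\theta((\ad X_i) v, (\ad X_i) v) = \pm B_\theta(v, (\ad X_i)^2 v) = 0$ forces $\ad X_i = 0$, and semisimplicity gives $X_0 = X_1 = 0$, so $X = 0$. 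The main point to get right is the $\delta$-separation of $X$ into pieces that individually centralize $\lieh$; everything afterward is routine.
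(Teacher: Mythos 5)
Your proof is correct, and it takes a genuinely different route from the paper's. The paper argues inside the symmetric-pair decomposition $\lieg^\lieh = Z(\lieh)\oplus\lieq^\lieh$: it first shows $\lieq^\lieh$ is abelian (using that conjugates of a maximal abelian subspace of $\lieq^\lieh$ under $\exp Z(\lieh)$ span $\lieq^\lieh$ while $\lieh$ acts trivially), then uses reductivity of $\lieh$ to split $\lieq = \lieq^\lieh\oplus\lieq'$ and runs the same generation argument again to get that $\lieq^\lieh$ centralizes all of $\lieq$. Your argument bypasses all of this with the observation that for $X\in\lieq^\lieh$ one has $(\ad X)^2 = 0$ (since $\ad X$ kills $\lieh$ and sends $\lieq$ into $[\lieq,\lieq]\subset\lieh$), followed by the standard ``square-zero self/skew-adjoint operator is zero'' trick using the positive-definite form $B_\theta$ and the identity $(\ad W)^* = -\ad(\delta W)$. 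This is arguably cleaner and more elementary than the paper's argument, and it does not need the complete-reducibility input that the paper invokes. One small simplification is available to you: since $\delta$ commutes with $\sigma$ it preserves both $\lieq$ and the centralizer of $\lieh$, hence $\lieq^\lieh$ is itself $\delta$-stable; so $X_0 = \tfrac12(X+\delta X)$ and $X_1 = \tfrac12(X-\delta X)$ lie in $\lieq^\lieh$ automatically, and the eigencomponent-matching step can be skipped.
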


\begin{proof} We may reduce to the case that $\lieg$ is semisimple.   
Restricting $\sigma$ to $\lieg^\lieh$ we obtain that  $\lieg^\lieh=(\lieh^\lieh=Z(\lieh))\oplus\lieq^\lieh$. If $\liea$ is a maximal   commutative subalgebra of $\lieq^\lieh$, then $\exp(Z(\lieh))$ applied to $\liea$ generates the vector space $\lieq^\lieh$. Since the action of $\lieh$ is trivial on $\liea$, we must have that $\liea=\lieq^\lieh$, i.e., $\lieq^\lieh$ is abelian. Since $\lieh$ is reductive, we have an $\lieh$-module decomposition $\lieq=\lieq^\lieh\oplus \lieq'$ for some $\lieh$-module $\lieq'$.   Let $\liea$ be a maximal abelian subspace of $\lieq$ containing $\lieq^\lieh$. Then $\liea=\lieq ^\lieh\oplus\liea'$ where $\liea'\subset \lieq'$. Again we have the fact that $\exp(\lieh)\cdot \liea'$ has to generate $\lieq' $. But then it follows that every element of $\lieq' $ commutes with $\lieq ^\lieh$. Thus $\lieq ^\lieh$ centralizes $\lieq $. Since $\lieh$ obviously commutes with $\lieq^\lieh$, we have shown that $\lieq^\lieh\subset Z(\lieg)$. 
\end{proof}

\section{Stratification and Weyl groups}  
 We have the stratification of $A_0$ coming from the conjugacy class of the $H$-isotropy group. So $a$ and $b$ are in the same stratum if and only if $H_a$ is conjugate to $H_b$. Another way to think of this is to map $A_0$ into $\quot X{H}$ and pull back the stratification of $\quot X{H}$ by isotropy type. We refine this stratification by taking connected components of the strata.   Since the isotropy type stratification is locally finite, it is finite when restricted to $A_0$.
 
 We will show that if $S$ is a stratum, then the translated tori $Au$, where $A$ is maximal ($\sigma$, $\theta_u$)-split and $u\in S$, are independent of $u$. Moreover,   if $\bar S_1\cap S_2\neq\emptyset$ where  the $S_i$ are strata, then the tori for $u\in S_1$ are included in those for $u\in S_2$. Thus if one wants to find the maximal tori of Theorem \ref{thm:main}, one needs to  only consider those $S_i$ which are minimal, i.e., closed. On the other hand, we show that to every stratum $S$ there is associated at most one $Au$, and a subset of these tori gives a collection as in Theorem \ref{thm:main}.  Later we will see that we can determine the  strata of $A_0$ using a Weyl group.

  If $u\in A_0$, let $A_{0,u}$ denote $(A_0^{\conj(H_u)})^0$.   The   stratum $X^{(H_u)}$ of $X$ corresponding to $H_u$ is $\{x\in X\mid H_{x_0}$ is conjugate to $H_u$ where $H*x_0\subset\overline{H*x}$ is closed$\}$.  Restricted to $A_0$ the stratum is just those points with isotropy group conjugate to $H_u$.

\begin{lemma}\label{lem:fixedpoints} Let $u\in A_0$. Then $X^{(H_u)}\cap A_0$ is contained in a finite union $\cup_i A_{0,u_i}u_i$. 
\end{lemma}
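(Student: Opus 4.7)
The plan is to carry out a direct Lie-algebra calculation at each point of $X^{(H_u)}\cap A_0$, use dimension-matching to force stratum points into a single coset locally, propagate by a clopen argument on connected components, and finally extract finitely many representatives using that the isotropy stratification of the compact torus $A_0$ is finite.

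For the local calculation at $u'\in X^{(H_u)}\cap A_0$, I would parametrize nearby points of $A_0$ as $v=\exp(\xi)u'$ with $\xi\in\liea_0$ small, so that $\theta_v=\conj(\exp\xi)\circ\theta_{u'}$. Solving $\theta_v(X)=X$ for $X\in\lieh$ becomes $\exp(\ad\xi)(X_+-X_-)=X_++X_-$, where $X=X_++X_-$ is the decomposition in $\lieh_{u'}\oplus\lieh^{-\theta_{u'}}$. The decisive observation is that $\xi\in\liea_0\subset\lieq\cap\liep$, so $\ad\xi$ interchanges $\lieh$ and $\lieq$; consequently $\cosh(\ad\xi)$ preserves $\lieh$ while $\sinh(\ad\xi)$ carries $\lieh$ into $\lieq$. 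Matching the $\lieh$- and $\lieq$-components decouples the equation into $\sinh(\ad\xi)(X_+-X_-)=0$ and $\cosh(\ad\xi)(X_+-X_-)=X_++X_-$. For $\xi$ sufficiently small the first forces $[\xi,X_+-X_-]=0$, and then the second collapses to $X_-=0$ with $[\xi,X_+]=0$. Hence $\lieh_v=(\lieh_{u'})^\xi$, the centralizer of $\xi$ in $\lieh_{u'}$.

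Since $v\in X^{(H_u)}$ means $H_v$ is $H$-conjugate to $H_u$, we have $\dim\lieh_v=\dim\lieh_u=\dim\lieh_{u'}$; combined with the inclusion $(\lieh_{u'})^\xi\subseteq\lieh_{u'}$, this forces equality $(\lieh_{u'})^\xi=\lieh_{u'}$. Thus $\xi$ centralizes $\lieh_{u'}$, i.e., $\xi\in\liea_0^{\lieh_{u'}}=\Lie(A_{0,u'})$, and $v\in A_{0,u'}u'$. To globalize, for each connected component $C$ of $X^{(H_u)}\cap A_0$ and a choice of $u_C\in C$, the set $E=\{v\in C:v\in A_{0,u_C}u_C\}$ is closed in $C$ and contains $u_C$; openness follows from the local calculation, since for $v_0\in E$ we have $\lieh_{v_0}=\lieh_{u_C}$, hence $A_{0,v_0}=A_{0,u_C}$, and so nearby stratum points of $C$ lie in $A_{0,v_0}v_0=A_{0,u_C}u_C$. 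By connectedness $E=C$, so $C\subseteq A_{0,u_C}u_C$.

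Finally, I would verify that $X^{(H_u)}\cap A_0$ has only finitely many connected components. Decomposing $\lieg$ into $A_0$-weight spaces $\lieg^\beta$ and noting that $\theta$ carries $\lieg^\beta$ to $\lieg^{-\beta}$, the contribution of $\lieg^\beta+\lieg^{-\beta}$ to $\lieh_v$ is determined by whether $\beta(v)$ equals one of the finitely many eigenvalues of $\sigma\theta$ on $\lieg^\beta$. Only finitely many such algebraic equations arise, cutting the compact torus $A_0$ into finitely many strata by isotropy Lie algebra; hence $X^{(H_u)}\cap A_0$ has finitely many components, and taking one $u_i$ from each yields the required finite cover. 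The main obstacle is the local Lie-algebra calculation --- specifically, the parity argument that decouples the fixed-point equation into $\cosh/\sinh$ pieces on $\lieh/\lieq$ (exploiting $\xi\in\lieq$), together with the observation that $\sinh(\ad\xi)Y=0$ for small $\xi$ forces $[\xi,Y]=0$; once this identity $\lieh_v=(\lieh_{u'})^\xi$ is in hand, the dimension count and clopen propagation are routine.
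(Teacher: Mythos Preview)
Your approach differs substantially from the paper's two-line argument, which avoids any infinitesimal calculation: the paper observes that the $*$-fixed set of $H_u$ in $A_0$ equals $A_0^{\conj(H_u)}u$ (since for $h\in H_u$ one has $\theta(h)=u^{-1}hu$, so $h*a=a$ iff $h$ conjugation-commutes with $au^{-1}$), and then uses Proposition~\ref{prop:w*Y} to conclude that $X^{(H_u)}\cap A_0$ lies in the finitely many $W_0^*$-translates of the finitely many components of $A_0^{\conj(H_u)}u$.

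Your $\cosh/\sinh$ parity computation is correct and yields the clean identity $\lieh_v=(\lieh_{u'})^\xi$ for small $\xi$. The gap is the assertion $\liea_0^{\lieh_{u'}}=\Lie(A_{0,u'})$. Since $A_{0,u'}=(A_0^{\conj(H_{u'})})^0$, one has $\Lie(A_{0,u'})=\liea_0^{\Ad(H_{u'})}$, whereas your calculation gives only $\xi\in\liea_0^{\ad(\lieh_{u'})}=\liea_0^{\Ad((H_{u'})^0)}$. When $H_{u'}$ is disconnected these can differ, and then you have placed $v$ only in a coset of a strictly larger subtorus, not in $A_{0,u'}u'$ as the lemma requires. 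The same issue recurs in your clopen step, where the equality $A_{0,v_0}=A_{0,u_C}$ needs $H_{v_0}=H_{u_C}$, not merely equality of Lie algebras. The remedy is to work at the group level: since $\liea_0\subset\SS_{u'}$, a neighbourhood of $u'$ in $A_0$ lies in the slice of Corollary~\ref{slicecorollary}, so $H_v\subseteq H_{u'}$ for $v\in A_0$ near $u'$; combined with conjugacy (same dimension and same number of components) this forces $H_v=H_{u'}$, and then $v$ lies in the $*$-fixed set $A_0^{\conj(H_{u'})}u'$, hence locally in $A_{0,u'}u'$. With this correction your clopen argument goes through, though your finiteness step via weight-space equations also tracks only isotropy Lie algebras; finiteness of components of $X^{(H_u)}\cap A_0$ is more directly a consequence of semialgebraicity of the orbit-type stratification.
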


\begin{proof}
The fixed point set $A_0^{H_u}$ for the $*$-action is just $A_0^{\conj(H_u)}u$.  If a point $u'\in A_0$ is $H$-conjugate to a point of $A_0^{\conj(H_u)}u$, then it is $W_0^*$-conjugate, so that we get a finite union covering $X^{(H_u)}\cap A_0$ as claimed.
\end{proof}

Since only finitely many strata intersect $A_0$ we have 

\begin{corollary}\label{cor:Sprops}
There is a finite $W_0^*$-stable collection $\{ A_{0,u_i}u_i\}$ with the following properties.
\begin{enumerate}
\item For $i\neq j$,  $A_{0,u_i}u_i\neq  A_{0,u_j}u_j$.
\item If $u\in A_0$, then $u\in A_{0,u_i}u_i$ for some $i$ where $H_{u_i}=H_u$.
\end{enumerate}
\end{corollary}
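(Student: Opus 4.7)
The plan is to enumerate the distinct pieces $A_{0,u}u$, $u \in A_0$, to label each by a point realizing the minimum isotropy on that piece, and to verify (1), (2), and $W_0^*$-stability. The key starting observation is a monotonicity property: for $u \in A_0$, $a \in A_{0,u}$, and $h \in H_u$, a direct calculation gives
\[
h*(au) \;=\; (hah^{-1})(h*u) \;=\; \conj(h)(a)\cdot u \;=\; au,
\]
using that $a \in A_0^{\conj(H_u)}$ (hence $\conj(h)(a) = a$) and $h*u = u$. Thus every $u' \in A_{0,u}u$ satisfies $H_{u'} \supseteq H_u$, so $u$ itself realizes the minimum isotropy on its own piece.

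Next I would establish that $\{A_{0,u}u : u \in A_0\}$ is finite. By local finiteness of the isotropy stratification on $X$ and compactness of $A_0$, only finitely many conjugacy classes $[J_1], \ldots, [J_k]$ of $H$-isotropy meet $A_0$. For each $[J_l]$, Lemma \ref{lem:fixedpoints} furnishes a finite cover of $X^{([J_l])} \cap A_0$ by pieces. A point $u \in A_0$ may, however, lie in a covering piece whose generic isotropy is strictly smaller than $H_u$; then its own piece $A_{0,u}u$ is a strict subset of the covering piece, belonging to a higher stratum. Applying Lemma \ref{lem:fixedpoints} inductively to each higher stratum and using local finiteness of strata, this refinement terminates after finitely many rounds, yielding a finite family $\{P_1, \dots, P_N\}$ that contains every $A_{0,u}u$ for $u \in A_0$.

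For each $P_i$ I pick $u_i \in P_i$ with $H_{u_i}$ equal to the minimum isotropy on $P_i$ (well-defined by the monotonicity). Condition (1) is then automatic after removing duplicates, and (2) follows because any $u \in A_0$ lies in its own piece $A_{0,u}u = P_i$, on which $H_u$ is the minimum isotropy, forcing $H_{u_i} = H_u$. For $W_0^*$-stability, the computation $w*(au) = \conj(w)(a)\cdot(w*u)$ combined with $\conj(w)(A_{0,u}) = A_{0,w*u}$ (which reduces to $wH_uw^{-1} = H_{w*u}$) yields $w*(A_{0,u}u) = A_{0,w*u}(w*u)$, so $W_0^*$ permutes the pieces. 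The main obstacle is the finiteness of the refined collection: it hinges on careful iteration of Lemma \ref{lem:fixedpoints} across all conjugacy classes together with the local finiteness of the isotropy stratification applied to the compact $A_0$.
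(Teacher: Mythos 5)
Your overall strategy is the right one, and it is the same as the paper's: finitely many $H$-isotropy strata meet the compact torus $A_0$, and Lemma \ref{lem:fixedpoints} covers each such stratum by finitely many translated subtori $A_{0,u_i}u_i$; one then combines these, saturates under $W_0^*$, and removes duplicates. Your monotonicity computation ($h*(au)=\conj(h)(a)\cdot u=au$ for $a\in A_{0,u}$, $h\in H_u$, whence $H_{u'}\supseteq H_u$ for $u'\in A_{0,u}u$) is correct and is exactly the ingredient one needs. The $W_0^*$-stability computation is also fine.

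There is, however, a gap at the crucial point you yourself flag. The claim that the resulting family contains \emph{every} $A_{0,u}u$ — which is what (2) requires — is asserted but not actually proved: ``applying Lemma \ref{lem:fixedpoints} inductively to each higher stratum'' is a no-op, since the opening ``for each $[J_l]$'' already runs the lemma over all strata, so the iteration never adds anything new. The missing argument is a sandwich-by-conjugacy step, not an iteration. Fix $u\in A_0$ and apply Lemma \ref{lem:fixedpoints} to the class $[H_u]$; this gives a piece $A_{0,v}v\ni u$. From the lemma's construction $v$ has the form $a(w*u')$ with $H_{u'}$ conjugate to $H_u$, $w\in W_0^*$ and $a\in A_0^{\conj(H_{w*u'})}$, so your monotonicity applied at $w*u'$ gives $H_v\supseteq H_{w*u'}$, a conjugate of $H_u$; your monotonicity applied at $v$ (using $u\in A_{0,v}v$) gives $H_u\supseteq H_v$. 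Thus $H_u\supseteq H_v\supseteq H_{w*u'}$ with the two extremes $H$-conjugate; since these isotropy groups are real algebraic with finitely many components, a conjugate subgroup contained in another forces equality, so $H_v=H_u$, hence $A_{0,v}=A_{0,u}$ and $A_{0,v}v=A_{0,u}u$. This is the step that makes the finite family cover every $A_{0,u}u$ and delivers (2); once it is in place, no iterative refinement is needed.
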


Now for a fixed $i$, let $V_i$ denote the union of the $A_{0,u_j}u_j$ such that $H_{u_i}$ is properly included in $H_{u_j}$. Then $T_i:=A_{0,u_i}u_i\setminus V_i$ is Zariski open and dense in $A_{0,u_i}u_i$ and is the set of points of $A_{0,u_i}u_i$ whose isotropy group is   $H_{u_i}$. Now define the strata of $A_0$ to be the connected components of the $T_i$.

\begin{proposition}\label{prop:Slocal}
Let $u\in A_0$ and let $S$ be the stratum containing $u$. Then  for $u'\in S$ we have $H_u=H_{u'}$, $P_u=P_{u'}$ and $P_uu=P_{u'}u'$.   
 \end{proposition}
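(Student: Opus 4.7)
The plan is to establish the three equalities in order, using the construction of the stratum for the first, Lemma~\ref{centerlemma} for the main content, and a direct manipulation for the third. Let $S\subset T_i=A_{0,u_i}u_i\setminus V_i$ be the stratum containing $u$, and take $u'\in S$. The equality $H_u=H_{u'}$ is immediate from the construction: by the very definition of $T_i$ (refining $A_{0,u_i}u_i$ by removing the locus $V_i$ where $H_{u_i}$ is strictly enlarged), every point of $T_i$ has $H$-isotropy exactly $H_{u_i}$. Because $A_{0,u}=(A_0^{\conj(H_u)})^0$ depends only on $H_u$, we have $A_{0,u}=A_{0,u_i}$, so I can write $u'=au$ with a unique $a\in A_{0,u}$.

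The core of the argument is to show $\Lie(A_{0,u})\subset Z(\lieg^{(u)})$, which will force $P_u=P_{u'}$. First, $A_0\subset G^{(u)}$: since $A_0$ is $(\sigma,\theta)$-split, $\tau(a_0)=\theta(a_0^{-1})=a_0$ for each $a_0\in A_0$, and $\tau_u=\conj(u)\circ\tau$ then fixes $a_0$ because $u,a_0\in A_0$ commute. Next, $u\in A_0\subset X_0\subset\M$, so $H*u$ is closed by Theorem~\ref{thm:homeom} and $G^{(u)}$ is reductive by Corollary~\ref{cor:semisimple}. The involution $\sigma$ restricts to $G^{(u)}$ with fixed-point Lie algebra $\lieh\cap\lieg^{(u)}=\Lie(H_u)$ and $(-1)$-eigenspace $\lieq\cap\lieg^{(u)}$. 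Lemma~\ref{centerlemma} applied to the reductive pair $(\lieg^{(u)},\sigma|_{\lieg^{(u)}})$ gives $(\lieq\cap\lieg^{(u)})^{\lieh\cap\lieg^{(u)}}\subset Z(\lieg^{(u)})$. Now $\Lie(A_{0,u})$ sits in $\lieq$ (by $\sigma$-splitness of $A_0$), in $\lieg^{(u)}$ (since $A_{0,u}\subset A_0\subset G^{(u)}$), and centralizes $\Lie(H_u)$ by definition of $A_{0,u}$; so $\Lie(A_{0,u})\subset Z(\lieg^{(u)})$.

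Write $a=\exp(X)$ with $X\in Z(\lieg^{(u)})$. Then $\Ad(a)$ acts trivially on $\lieg^{(u)}$. Using the descriptions $\lieg^{(u)}=\{Z:\tau(Z)=\Ad(u^{-1})Z\}$ and $\lieg^{(u')}=\{Z:\tau(Z)=\Ad(u^{-1}a^{-1})Z\}$, together with $[a,u]=e$, a one-line check gives $\lieg^{(u)}\subset\lieg^{(u')}$. By the symmetric argument (in which $X\in Z(\lieg^{(u')})$ by the same reasoning applied at $u'$), we get equality $\lieg^{(u)}=\lieg^{(u')}$, hence $(G^{(u)})^0=(G^{(u')})^0$, hence $P_u=P_{u'}$.

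Finally, to deduce $P_uu=P_{u'}u'$ it suffices to show $P_ua=P_u$. The torus $A_0$ is connected, $\theta$-split, and contained in $G^{(u)}$, so $A_0\subset P_u$ and in particular $a\in P_u$. For any $p\in P_u\subset(G^{(u)})^0$, centrality of $a$ in $(G^{(u)})^0$ gives $pa=ap$, so $\theta(pa)=p^{-1}a^{-1}=(pa)^{-1}$; thus $pa\in P_\theta(G^{(u)})$, and connectedness of $A_0$ places $pa$ in the same component of $P_\theta(G^{(u)})$ as $p$, i.e.\ in $P_u$. The same argument for $a^{-1}$ gives $P_ua=P_u$, and then $P_uu=P_ua^{-1}u'=P_uu'=P_{u'}u'$. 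The main obstacle is the second paragraph: one must verify that $G^{(u)}$ is reductive and that Lemma~\ref{centerlemma} applies cleanly to the induced $\sigma$-decomposition, since this is what promotes the weak centralizing condition defining $A_{0,u}$ to actual centrality in $\lieg^{(u)}$; the remainder is a direct manipulation exploiting the commutativity of elements of $A_0$.
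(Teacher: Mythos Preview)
Your proof is correct and follows essentially the same route as the paper: both use Lemma~\ref{centerlemma} applied to $(\lieg^{(u)},\sigma)$ to show that $\Lie(A_{0,u})\subset Z(\lieg^{(u)})$, and then deduce $\lieg^{(u)}\subset\lieg^{(u')}$ from the triviality of $\Ad(a)$ on $\lieg^{(u)}$.

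The one genuine difference is how you upgrade the inclusion $\lieg^{(u)}\subset\lieg^{(u')}$ to an equality. The paper argues by dimension: since $H_u=H_{u'}$ the $\lieh$-parts of $\lieg^{(u)}$ and $\lieg^{(u')}$ agree, and constancy (up to isomorphism) of the slice representation along a connected isotropy stratum forces $\dim\SS_u=\dim\SS_{u'}$, hence $\dim\lieg^{(u)}=\dim\lieg^{(u')}$. You instead rerun the centrality argument at $u'$ (noting $A_{0,u}=A_{0,u'}$ and $a^{-1}\in A_{0,u'}$) to obtain the reverse inclusion directly. Your approach is slightly more self-contained, since it does not invoke the general fact about slice representations along strata; the paper's approach is shorter once that fact is granted. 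Your treatment of $P_uu=P_{u'}u'$ is also more explicit than the paper's one-line ``$A_{0,u}$ acts as translations on $P_u$,'' but the content is the same.
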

 
 \begin{proof} We have $u'=au$ for some $a\in A_{0,u}$. By definition,  $H_u=H_{u'}$. Now the slice representation of $H_u$  is its action on $\SS_u=\lieg^{(u)}\cap\lieq$. Moreover, $\lieg^{(u)}\cap\lieh$ is the Lie algebra of $H_u$. Since $P_u$ is determined by $\lieg^{(u)}$, we see that $P_u$ is determined by the slice representation   at $u$. But slice representations are constant, up to isomorphism, along connected components of isotropy strata.   By Lemma     \ref{centerlemma} applied to the action of $\sigma$ on $\lieg^{(u)}$ we obtain that  $(\lieg^{(u)})^{H_u}\cap\lieq$ lies in the center of $\lieg^{(u)}$.  It follows that $A_{0,u}$ centralizes   $\lieg^{(u)}$ so that $\lieg^{(u')}\supset\lieg^{(u)}$. Thus 
 $\lieg^{(u')}=\lieg^{(u)}$ and  $P_u=P_{u'}$. Since  $A_{0,u}$ centralizes $\lieg^{(u)}$, it acts as translations on $P_u$. Thus $P_uu=P_{u'}u'$.   
   \end{proof}

  \begin{corollary} \label{Pconstantcorollary}
  Let $S$ be a stratum of $A_0$. Then $\{Au\colon A$ is maximal $(\sigma,\theta_u)$-split$\}$ is independent of $u\in S$.
   \end{corollary}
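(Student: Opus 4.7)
The plan is to compare maximal $(\sigma,\theta_u)$-split tori and maximal $(\sigma,\theta_{u'})$-split tori for two points $u, u'$ in the same stratum $S$, and then compare their translates $Au$ and $Au'$. By Proposition \ref{prop:Slocal} we may write $u' = au$ with $a \in A_{0,u}$, and we know that $\lieg^{(u)} = \lieg^{(u')}$ (so $G^{(u)} = G^{(u')}$ on the Lie algebra level) and that $A_{0,u}$ centralizes $\lieg^{(u)}$, so $A_{0,u} \subset Z(G^{(u)})^0$.

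First I would observe that any maximal $(\sigma,\theta_u)$-split torus $A$ in $G$ automatically sits inside $G^{(u)}$, because every $x \in A$ satisfies $\tau_u(x) = \theta_u \sigma(x) = (x^{-1})^{-1} = x$. So the maximal $(\sigma,\theta_u)$-split tori in $G$ are exactly the maximal $(\sigma,\theta_u)$-split tori of $G^{(u)}$. Next I would show that $\theta_{u'}$ and $\theta_u$ agree on $G^{(u)}$: for $g \in G^{(u)}$, we have $\theta_u(g) \in G^{(u)}$ (since $\sigma$ and hence $\theta_u = \tau_u \sigma$ preserve $G^{(u)}$), and thus $\theta_{u'}(g) = au\,\theta(g)\,u^{-1}a^{-1} = a\,\theta_u(g)\,a^{-1} = \theta_u(g)$ because $a \in Z(G^{(u)})$. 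Consequently the notions of $(\sigma,\theta_u)$-split and $(\sigma,\theta_{u'})$-split torus coincide inside $G^{(u)} = G^{(u')}$, so the two sets of maximal split tori are identical.

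The remaining point is that $Au = Au'$ as subsets of $G$. For this I would verify that the central subtorus $A_{0,u}$ is itself $(\sigma,\theta_u)$-split: it lies in $A_0$ which is \st-split, and since $A_0$ is abelian, $u$ commutes with any $a \in A_{0,u} \subset A_0$, giving $\theta_u(a) = u a^{-1} u^{-1} = a^{-1}$. Combined with $\sigma(a) = a^{-1}$ this shows $A_{0,u}$ is $(\sigma,\theta_u)$-split and central in $G^{(u)}$. Any maximal $(\sigma,\theta_u)$-split torus $A$ must therefore contain $A_{0,u}$ (otherwise $A \cdot A_{0,u}$ would be a strictly larger $(\sigma,\theta_u)$-split torus, using centrality to keep it commutative and split). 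In particular $a \in A$, so $Au' = Aau = Au$, completing the identification.

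The only genuinely subtle step is the containment $A_{0,u} \subset A$, since it uses all three facts simultaneously: $A_{0,u}$ is a torus, it is $(\sigma,\theta_u)$-split, and it is central in $G^{(u)}$. The centrality is essential to keep $A \cdot A_{0,u}$ commutative and hence to use maximality of $A$. Everything else is bookkeeping from Proposition \ref{prop:Slocal} and the definitions of $\theta_u$ and $\tau_u$.
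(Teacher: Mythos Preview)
Your argument is correct and spells out precisely the reasoning the paper leaves implicit (the corollary is stated without proof, as an immediate consequence of Proposition~\ref{prop:Slocal}). One small point of phrasing: from ``$A_{0,u}$ centralizes $\lieg^{(u)}$'' you only get that $A_{0,u}$ is central in $(G^{(u)})^0$, not necessarily in all of $G^{(u)}$; but since every torus $A$ is connected and hence lies in $(G^{(u)})^0$, this is all you use, and the argument goes through unchanged.
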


   \begin{definition}
We say that a maximal $(\sigma,\theta_u)$-split and $\delta$-stable torus $A$ is \emph{standard\/} if $A\cap U\subset A_0$.
\end{definition}

\begin{proposition}\label{prop:standard}
Let $A$ be maximal $(\sigma,\theta_u)$-split where $u\in A_0$. Then there is an $h\in H_0$ such that $h*Au$ is standard.
\end{proposition}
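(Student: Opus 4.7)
The plan is to work inside the reductive subgroup $G^{(u)}$ and to exploit the fact that, although $\lie a_0$ was defined as a maximal $(\sigma,\theta)$-split commutative subspace of $\lieg_0$, the presence of $u\in A_0$ actually forces $\lie a_0$ to be maximal abelian in the larger space $\lieg^{(u)}\cap\lieg_0\cap\lieq$. The conclusion then follows from the classical conjugacy theorem for maximal abelian subspaces in a compact symmetric pair. I may assume (as is implicit throughout Section 7 and required by Definition 7.5, since conjugation by $H_0\subset U$ preserves $\delta$-stability) that $A$ is $\delta$-stable, and write $A=A_c\cdot A_n$ with $A_c:=A\cap U$ and $A_n:=\exp(\Lie(A)\cap\lier_0)$. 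Set $\lie a_c:=\Lie(A_c)$; then $\lie a_c\subset\SS_u\cap\lieg_0=\lieg^{(u)}\cap\lieg_0\cap\lieq$. A direct computation using that $u\in A_0$ commutes with $\lie a_0$ (so that $\theta_u|_{\lie a_0}=\theta|_{\lie a_0}=-\id$) also places $\lie a_0$ inside $\lieg^{(u)}\cap\lieg_0\cap\lieq$.

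The first key step is to prove that $\lie a_0$ is maximal abelian in $\lieg^{(u)}\cap\lieg_0\cap\lieq$. Let $\lie b$ be any abelian subspace sandwiched between $\lie a_0$ and $\lieg^{(u)}\cap\lieg_0\cap\lieq$. Since $A_0$ is connected, $u=\exp(X)$ for some $X\in\lie a_0\subset\lie b$, and the abelian-ness of $\lie b$ gives $\Ad(u)|_{\lie b}=\id$. For $Y\in\lie b$ the condition $Y\in\lieg^{(u)}\cap\lieq$ reads $\theta_u(Y)=-Y$, i.e.\ $\Ad(u)\theta(Y)=-Y$; combined with $\Ad(u^{-1})Y=Y$, this yields $\theta(Y)=-Y$. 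Thus $\lie b$ is $(\sigma,\theta)$-split commutative in $\lieg_0$, so by maximality of $\lie a_0$, $\lie b=\lie a_0$.

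Next, by Corollary 5.4 applied at $u\in\M$ the group $G^{(u)}$ is reductive, so $((G^{(u)})_0,(H_u)_0,\sigma)$ is a genuine compact symmetric pair whose $\liep$-part is $\lieg^{(u)}\cap\lieg_0\cap\lieq$; classically (Helgason, Ch.~VII) all maximal abelian subspaces there are $(H_u)_0^0$-conjugate. Extend $\lie a_c$ to a maximal abelian $\tilde{\lie a}_c\subset\lieg^{(u)}\cap\lieg_0\cap\lieq$ and choose $h\in(H_u)_0^0$ with $\Ad(h)(\tilde{\lie a}_c)=\lie a_0$. Then $\Ad(h)(\lie a_c)\subset\lie a_0$, i.e.\ $\conj(h)(A_c)\subset A_0$. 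Since $h\in(H_u)_0^0\subset H_0$ fixes $u$ under the $*$-action, $h*(Au)=\conj(h)(A)\cdot u$ has compact part $\conj(h)(A)\cap U=\conj(h)(A_c)\subset A_0$, and $\conj(h)(A)$ remains $\delta$-stable and maximal $(\sigma,\theta_u)$-split. Hence $h*Au$ is standard.

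The main obstacle is the maximality assertion in the second paragraph: it bridges the ``global'' object $\lie a_0$ (maximal $(\sigma,\theta)$-split in $\lieg_0$) with the ``local'' structure at $u$ (governed by $(\sigma,\theta_u)$ through $G^{(u)}$). The argument works precisely because $u\in\exp(\lie a_0)$ allows one to commute $u$ past any abelian extension of $\lie a_0$, collapsing $\theta_u$ down to $\theta$ on such an extension and thereby letting the maximality hypothesis on $\lie a_0$ do its job. Everything else is formal, relying on the classical conjugacy theorem for Cartan subspaces of compact symmetric spaces and on the fact that $H_0$-conjugation preserves the Cartan decomposition of $A$.
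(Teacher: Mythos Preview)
Your argument is correct and takes a genuinely different, more direct route than the paper's. The paper proceeds by a dimension-counting argument using the stratification of $A_0$: it moves $u$ via $H_0$ to a stratum $S$ whose dimension is at least $\dim(A\cap U)$, picks a principal point $bu$ nearby, and then shows $S\subset (A\cap U)u$, forcing $A\cap U\subset A_0$. Your argument instead stays at the given $u$ and exploits the elementary observation that $\lie a_0$ is already maximal abelian in $\SS_u\cap\lieg_0$ (because $u\in\exp(\lie a_0)$ collapses $\theta_u$ to $\theta$ on any abelian extension of $\lie a_0$), after which a single application of the classical conjugacy theorem for Cartan subspaces in the compact symmetric pair $(\lieg^{(u)}\cap\lieg_0,\sigma)$ moves $A\cap U$ into $A_0$ by an element of $((H_0)_u)^0$. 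Your approach avoids the stratification machinery and principal points entirely; the paper's approach, on the other hand, fits naturally into the surrounding development and foreshadows how the standard tori are tied to strata (Proposition~7.12).

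One small point: your parenthetical justification for assuming $A$ is $\delta$-stable is not quite right. That conjugation by $H_0$ preserves $\delta$-stability only tells you the property is \emph{preserved} under your later conjugation, not that you can arrange it at the outset. The correct reduction (which the paper makes explicit) is that any maximal $\sigma$-split torus in the reductive group $G^{(u)}$ can be conjugated by an element of $((H_0)_u)^0$ to a $\delta$-stable one; this is the result of \={O}shima--Matsuki cited in the paper. With that citation in place your proof is complete.
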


\begin{proof}   
The torus $A$ is maximal $\sigma$-split in $G^{(u)}$, and it follows from \cite[Remark after Lemma 5]{Oshima-Matsuki80} that we may conjugate $A$ by an  element of $((H_0)_u)^0$ such that it becomes $\delta$-stable.  So we may assume that $A$ is $\delta$-stable. Write $A=BC$ where $B$ is a $\delta$-split torus and $C$ is  a compact torus.  By Theorem \ref{thm:generalslice},  the intersection of any orbit $H_0*cu$ with $Cu$ is finite, $c\in C$. Thus the dimension of the intersection of $H_0*Cu$ with $A_0$ is the dimension of $C$. Hence there is a stratum $S$ of $A_0$ of dimension at least  $\dim C$ such that the orbit of an open subset of $Cu$  intersects $S$. Now pick  a  $cu$ in the open set. We can also assume that there is a  $b\in B$ such that $bcu$ is a principal point of $Au$. Thus we can reduce  to the case that $u$ lies in a stratum $S$ of dimension at least $\dim C$ and that there is a principal point $x:=bu$, $b=\exp(Z)\in B$ where $Z\in\SS_u\cap\lier_0$. 
 
 From the proof of \ref{thm:main} we see that the Lie algebra of the unique $(\sigma,\theta_{x})$-split maximal torus through $x$ is $\{Y\in\SS_u\mid [Z,Y]=0\}$, which is, of course, $\liea$. Now if $a\in A_0$ and $au\in S$, we know that $\theta_{au}(Z)=-Z$ and that $\theta_u(Z)=-Z$. Thus $a$ centralizes $Z$ so that $au\in Cu$ since $S$ is connected. But $\dim S\geq\dim C$. Hence a neighborhood of the identity in $C$ lies in $A_0$. Hence $C\subset A_0$ so that $A$ is standard.
\end{proof}

   \begin{proposition}
   Let $S$ be a   stratum of $A_0$ whose closure intersects the stratum $T$. Let $u\in S$ and let $A$ be a maximally   $(\sigma,\theta_u)$-split  torus in $X$. Then $A$ is $(\sigma,\theta_v)$-split for $v\in T$.
   \end{proposition}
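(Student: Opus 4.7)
The plan is to show that $\liea:=\Lie(A)$ is contained in $\SS_v$ for every $v\in T$; since $\sigma$ already acts as $-1$ on $\liea$ independently of $v$, the inclusion $\liea\subset\SS_v$ is equivalent to $\theta_v$ acting as $-1$ on $\liea$, which is exactly the $(\sigma,\theta_v)$-split condition on $A$.

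The first ingredient is Proposition \ref{prop:Slocal}, whose proof actually establishes the stronger statement that the subalgebra $\lieg^{(u')}$ is constant on each stratum (not merely conjugate—recall that $A_{0,u}$ centralizes $\lieg^{(u)}$ and acts as translations on $P_u$). Let $\lie l$ denote its common value on $S$ and $\lie l'$ its common value on $T$. The $(\sigma,\theta_u)$-split hypothesis on $A$ then gives $\liea\subset\SS_u=\lie l\cap\lieq$.

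The heart of the argument is an upper-semicontinuity step. Pick $v_0\in\overline S\cap T$ and a sequence $u_n\in S$ with $u_n\to v_0$. The family $w\mapsto\tau_w=\conj(w)\circ\theta\circ\sigma$ depends continuously on $w\in A_0$, and each $\tau_{u_n}$ fixes $\lie l$ pointwise. Passing to the limit in $n$ gives $\tau_{v_0}|_{\lie l}=\id_{\lie l}$, i.e.\ $\lie l\subset\lieg^{(v_0)}=\lie l'$. Intersecting with $\lieq$ yields $\SS_{v_0}\supset\lie l\cap\lieq\supset\liea$, and since $\SS_v$ is constant on the stratum $T$ we conclude $\liea\subset\SS_v$ for every $v\in T$, as desired.

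I expect the only (modest) obstacle to be confirming that $\lieg^{(u')}$ is genuinely constant, not merely conjugate, along a stratum; this is exactly what the argument of Proposition \ref{prop:Slocal} provides. Once that is in hand, the rest is just the continuity of the family $w\mapsto\tau_w$ together with the observation that fixing a fixed finite-dimensional subspace pointwise is a closed condition.
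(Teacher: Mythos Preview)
Your proof is correct and follows essentially the same route as the paper's. The paper's argument is more terse: it invokes Corollary~\ref{Pconstantcorollary} (constancy along $T$) to reduce to $v\in\overline S\cap T$ and then says ``clearly $A$ is $\theta_v$-split,'' which is exactly your continuity step $\tau_{u_n}\to\tau_{v_0}$ applied to the constant subalgebra $\lie l=\lieg^{(u)}$. You have simply unpacked the ``clearly'' and made explicit the constancy of $\lieg^{(u)}$ along strata that is proved (but not separately stated) in Proposition~\ref{prop:Slocal}.
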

   
   \begin{proof}
  By Corollary \ref{Pconstantcorollary} we may assume  that $v\in T$ is in the closure of $S$. Then clearly $A$ is $\theta_v$-split. 
     \end{proof}

   \begin{corollary}\label{cor:minstrata}
   Let $S_1,\dots,S_r$ be the closed strata of $A_0$ and consider a pair $(A,u)$ where $u\in A_0$ and $A$ is a maximal $(\sigma,\theta_u)$-split  torus.  Then for some $v\in\cup_i S_i$, $A$ is maximal $(\sigma,\theta_v)$-split.
   \end{corollary}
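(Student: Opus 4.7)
My plan is to use Proposition~\ref{prop:standard} to reduce to the case where $A$ is standard, so the compact part $C := A \cap U$ lies in $A_0$.  The key lemma I would establish is: \emph{for every $c \in C$, the translate $v := cu$ lies in $A_0$ and $A$ is maximal $(\sigma, \theta_v)$-split}.  Splitness is immediate since $c \in A$ commutes with $A$, giving $\theta_v(x) = c\theta_u(x)c^{-1} = c x^{-1} c^{-1} = x^{-1}$ for $x \in A$.  For maximality, suppose $A' \supseteq A$ is $\sigma$-split in $G^{(v)}$.  Because $A'$ is an abelian torus containing $c$, we have $[c, a'] = 0$ for every $a' \in A'$; substituting $v = cu$ into $\tau_v(a') = v\tau(a')v^{-1} = a'$ and using $[c, a'] = 0$ collapses this to $u\tau(a')u^{-1} = a'$, i.e., $a' \in G^{(u)}$.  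Hence $A' \subseteq G^{(u)}$ is a $\sigma$-split torus containing $A$, and the maximality of $A$ at $u$ forces $A' = A$.

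Given the lemma, $A$ is maximal $(\sigma, \theta_v)$-split for every $v$ in the coset $Cu \subseteq A_0$.  To finish, I would exhibit some $v \in Cu$ lying in a closed stratum via an iterative descent along the partial order of strata.  If the stratum $S$ of $u$ is already one of the $S_i$, take $v = u$.  Otherwise, the preceding proposition yields a stratum $T \subsetneq \bar{S}$ on which $A$ remains $(\sigma, \theta_w)$-split; by choosing $c \in C$ so that $v_1 := cu \in T$, the lemma upgrades this to maximality at $v_1$, and we replace $u$ with $v_1$ and iterate.  Since $A$ stays standard throughout (the compact part $C = A \cap U$ is intrinsic to $A$, independent of the base point) and the partial order of strata is finite and well-founded (Corollary~\ref{cor:Sprops}), this descending chain terminates at some closed stratum $S_i$ containing the required $v$.

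The main obstacle is the iteration step, specifically ensuring that $Cu$ actually meets the smaller stratum $T$ produced by the preceding proposition.  The crux lies in combining Proposition~\ref{prop:Slocal} (which pins down the local structure: $A_{0,u}$ centralizes $\lieg^{(u)}$, so motion within $Cu$ is compatible with the isotropy-type stratification) with the geometric fact that a compact coset in $A_0$ cannot be entirely contained in a single open stratum when a strictly smaller stratum lies in its closure.  Once this forces $Cu \cap T \neq \emptyset$, the rest follows cleanly from the explicit computation with $v = cu$ above, and the finiteness of the stratification guarantees termination at a closed $S_i$.
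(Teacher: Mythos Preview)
Your key lemma---that $A$ remains maximal $(\sigma,\theta_{cu})$-split for every $c\in C=A\cap U$---is correct and the verification is clean.  The trouble is exactly where you flag it: you never establish that $Cu$ meets a stratum strictly below $S$.  Your justification invokes Proposition~\ref{prop:Slocal} and a vague compactness principle, but neither gives what you need.  Proposition~\ref{prop:Slocal} tells you $A_{0,u}$ centralizes $\lieg^{(u)}$; it says nothing about how $Cu$ sits relative to the boundary $\bar S\setminus S$.  And a compact coset in $A_0$ can certainly miss a given lower stratum $T$ entirely (imagine $A_0$ two-dimensional, $Cu$ a circle, and the closed strata a finite set of points off that circle).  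So the descent stalls.  There is also a smaller issue at the very start: Proposition~\ref{prop:standard} replaces $A$ by a conjugate $hAh^{-1}$, but the corollary is a statement about the given $A$; conjugating back sends your base point $v'$ to $h^{-1}*v'\in X_0$, which need not lie in $A_0$.

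The paper's route sidesteps all of this.  The preceding proposition already gives that $A$ is $(\sigma,\theta_v)$-split for \emph{any} $v$ in a stratum $T\subset\bar S$, with no restriction to $Cu$.  Maximality at $v$ then comes for free from a dimension count: every maximal $(\sigma,\theta_w)$-split torus has dimension $\dim\quot XH$ (Remark~\ref{rem:principal} and the proof of Theorem~\ref{thm:main}), and $A$ already has that dimension because it was maximal at $u$.  So $A$ is automatically maximal $(\sigma,\theta_v)$-split, and one descends through the finite poset of strata to a closed one with no further work---no reduction to standard, no need to track $Cu$.  Your lemma is a pleasant observation, but the dimension argument makes it unnecessary here.
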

   
    If one only wants to find the maximal tori (appropriately split), then one only has to look at the minimal strata. One also only has to look at standard tori.  However, this is not an algorithm. We give an algorithm after Proposition \ref{prop:Aunique} below.

     From Remark \ref{rem:fibers} we have an $H_0$-equivariant projection $\pi\colon  \M\to X_0$, $\exp(\xi)u\mapsto u$. Let $\pi_0$ denote the induced mapping from $\pi\inv(A_0)\to A_0$. From  Theorem \ref{thm:A0} we have an isomorphism $X_0/H_0\simeq A_0/W^*_0$. Let  $\rho\colon A_0\to  A_0/W^*_0$ be the quotient mapping. Then from our isomorphism  and $\pi$   we obtain a surjection $\gamma\colon \M/H_0\to A_0/W^*_0$. We show that  $\pi_0$ and $\gamma$ are product bundles over appropriate strata.

\begin{theorem}\label{thm:fibration}
Let $S$ be a   stratum of  $A_0$ and let $u\in S$. Then, over $S$, $\pi_0$ is a product bundle   $S\times\exp(S_u\cap\lier_0)\to S$ and, over $\rho(S)$, $\gamma$ is a product bundle     $\rho(S)\times \exp(\SS_u\cap\lier_0)/(H_0)_u\to \rho(S)$.
\end{theorem}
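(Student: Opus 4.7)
The plan is to first exhibit a diffeomorphism trivializing $\pi_0$ over $S$, and then to descend this to $\gamma$ over $\rho(S)$ by analyzing the $H_0$-action.

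For the first statement, I fix $u\in S$ and note that by Proposition~\ref{prop:Slocal} the subspace $\SS_u\cap\lier_0$ is independent of the choice of $u\in S$. Define
\[
\Phi\colon S\times\exp(\SS_u\cap\lier_0)\longrightarrow\pi_0^{-1}(S),\qquad (v,\exp\xi)\longmapsto\exp(\xi)v.
\]
By Proposition~\ref{prop:M} this is well-defined and surjective, and since $v\in G_0$ while $\exp\xi\in\exp(\lier_0)$, the uniqueness of the Cartan decomposition $G=\exp(\lier_0)G_0$ makes $\Phi$ a diffeomorphism.

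For the second statement, I use $H_0$-equivariance of $\pi$ to identify $\gamma^{-1}(\rho(S))$ with a quotient of $\pi_0^{-1}(S)$. Setting $N(S):=\{h\in H_0\mid h*S=S\}$, the natural map $\pi_0^{-1}(S)/N(S)\to\gamma^{-1}(\rho(S))$ is a homeomorphism: surjectivity follows from $\pi^{-1}(H_0\cdot S)=H_0\cdot\pi_0^{-1}(S)$, and injectivity uses that any $h\in H_0$ sending one point of $S$ to another normalizes the common $H$-isotropy and so, together with connectedness of $S$, must preserve the stratum. By Proposition~\ref{prop:Slocal} the isotropy group $(H_0)_u$ is the same for every $u\in S$, lies in $N(S)$, acts trivially on the first factor of $\Phi$, and acts on the second by $\Ad$; a pointwise application of Proposition~\ref{prop:w*Y} together with connectedness of $S$ identifies $N(S)/(H_0)_u$ with the subgroup $W_0^*(S):=\{w\in W_0^*\mid w\cdot S=S\}$, and $S/W_0^*(S)=\rho(S)$.

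The main obstacle is then to verify that the residual $W_0^*(S)$-action on the quotient fiber $\exp(\SS_u\cap\lier_0)/(H_0)_u$ is trivial; granted this, the twisted quotient $(S\times\exp(\SS_u\cap\lier_0)/(H_0)_u)/W_0^*(S)$ collapses to the asserted product $\rho(S)\times\exp(\SS_u\cap\lier_0)/(H_0)_u$. To establish this triviality I would lift $w\in W_0^*(S)$ to $\tilde w\in N(S)$ and write $\tilde w*u=au$ with $a\in A_{0,u}$ (Proposition~\ref{prop:Slocal}); since $A_{0,u}$ centralizes $\lieg^{(u)}$, a direct computation shows that $\tilde w$ normalizes $G^{(u)}$ and commutes with both $\sigma$ and $\theta_u$ restricted to $G^{(u)}$. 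Thus $\Ad(\tilde w)$ is an involution-preserving automorphism of $\lieg^{(u)}$ stabilizing $\SS_u\cap\lier_0$, and the task reduces to showing that its action on this subspace agrees modulo $\Ad((H_0)_u)$ with the identity---this I expect to follow from a careful normalizer/centralizer analysis within $G^{(u)}$ relative to the symmetric-pair structure given by $\sigma$ and $\theta_u$.
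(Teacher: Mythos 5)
Your argument follows the paper's proof quite closely. The trivialization $\Phi(v,\exp\xi)=\exp(\xi)v$ of $\pi_0$ over $S$, using Proposition~\ref{prop:M}, Proposition~\ref{prop:Slocal}, and uniqueness of the Cartan decomposition, is exactly the first step. Your reduction of $\gamma^{-1}(\rho(S))$ to $\pi_0^{-1}(S)/N(S)$, the observation that $(H_0)_u$ is normal in $N(S)$, is constant along $S$, acts trivially on the base factor and by $\Ad$ on the fiber, and the identification $N(S)/(H_0)_u\simeq W_S^*$ via Proposition~\ref{prop:w*Y} — this is likewise the decomposition the paper uses, where $h$ is written as $h'\cdot((h')^{-1}h)$ with $h'\in N_0^*$ preserving $S$ and $(h')^{-1}h\in(H_0)_u$.

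Where you diverge is in honesty rather than method: you explicitly flag as the ``main obstacle'' the claim that the residual $W_S^*$-action on $\exp(\SS_u\cap\lier_0)/(H_0)_u$ is trivial (equivalently, that for $h'\in N_S\cap N_0^*$, $\Ad(h')$ preserves each $(H_0)_u$-orbit in $\exp(\SS_u\cap\lier_0)$, so that the twisted quotient $(S\times F/(H_0)_u)/W_S^*$ splits as a product). You are right that this is needed: without it one only gets a flat $F/(H_0)_u$-bundle over $\rho(S)$ with monodromy group the image of $W_S^*$, not necessarily a product. The paper's proof asserts the final isomorphism $\gamma^{-1}(\rho(S))\simeq\rho(S)\times\exp(\SS_u\cap\lier_0)/(H_0)_u$ immediately after establishing $\rho(S)\simeq S/W_S^*$, without spelling out this point. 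Your proposed route — lifting $w\in W_S^*$ to $\tilde w\in N(S)\cap N_0^*$, observing that $\conj(\tilde w)$ normalizes $G^{(u)}$ and commutes with $\sigma$, $\theta_u$ and $\delta$ there, and then reducing to a normalizer/centralizer computation inside the reductive group $G_1=(G^{(u)})^{\sigma\delta}$ with Cartan decomposition $(H_0)_u\exp(\SS_u\cap\lier_0)$, invoking Proposition~\ref{prop:a}(2) to conjugate $\Ad(\tilde w)\liet$ back to $\liet$ by an element of $((H_0)_u)^0$ — is the natural way to attempt to close this, but as written it stops at ``this I expect to follow from,'' so the proof is incomplete at precisely the point where the paper is terse. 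In short: same approach as the paper, correctly identified gap, but the gap is not closed in your writeup.
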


\begin{proof}  
From the description of $\M$ (Proposition \ref{prop:M}) and Proposition \ref{prop:Slocal} we see that $\pi_0\inv(S)\simeq  S\times\exp(\SS_u\cap\lier_0)$. Thus $\pi_0$ is a trivial bundle over $S$. 
Now let $\exp(Z_1)u_1$, $\exp(Z_2)u_2\in\pi_0\inv(S)$ where $u_1$, $u_2\in S$ and $Z_1$, $Z_2\in\SS_u\cap\lier_0$. Suppose that $h\in H_0$ and $h*\exp(Z_1)u_1=\exp(Z_2)u_2$. Then $h*u_1=u_2$, so there is an $h'\in N_0^*$ such that $h'*u_1=u_2$. Note that, since $W_0^*$ permutes the strata, $h'*S=S$. Then $(h')\inv h$ fixes $u_1$, hence it fixes all the points of $S$. Let $W^*_S$ denote the elements of $W_0^*$ which send $S$ into itself modulo the elements fixing $S$. Then $W^*_S$ acts freely on $S$ and $\rho(S)\simeq S/W^*_S$. Moreover,  $\gamma\inv(\rho(S))\simeq \rho(S)\times\exp(\SS_u\cap\lier_0)/(H_0)_u$.
\end{proof}

Now consider $u\in S$ and the group $G_1$ which is the fixed group of $\sigma\delta$ acting on $G^{(u)}$. Then $G_1$ is reductive with Cartan involution $\delta$,  and one easily sees that we have the  Cartan decomposition $G_1\simeq (H_0)_u\exp(\SS_u\cap\lier_0)$. Let $\liet$ be a maximal abelian subspace of $\SS_u\cap\lier_0$ and let $W(S,\liet)$ denote the associated Weyl group (the normalizer of $\liet$ in $(H_0)_u$ divided by the corresponding centralizer). Then 
$\exp(\SS_u\cap\lier_0)/(H_0)_u\simeq \exp(\liet)/W(S,\liet)$ (see, for example, \cite[Theorem 11.11]{Helm-Schwarz01}). We call $W(S,\liet)$ the \emph{Weyl group of  $S$ and $\liet$\/}. It is independent of $u\in S$ and the choice of $\liet$, up to isomorphism (see \ref{prop:a}).  

\begin{corollary}\label{cor:fiber}
Let $S$ be a stratum of $A_0$, $u\in S$, and let $\liet$ be a maximal toral subalgebra of $\SS_u\cap\lier_0$. Then the fibers of $\gamma$ above $\rho(S)$ are isomorphic to $\exp(\liet)/W(S,\liet)$.
\end{corollary}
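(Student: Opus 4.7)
The plan is to combine Theorem \ref{thm:fibration} with the general fact, already recalled in the paragraph preceding the statement, that for a reductive Lie group with Cartan decomposition the orbit space of the noncompact part under conjugation by the maximal compact is parameterized by a maximal toral subalgebra modulo its Weyl group.

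First I would invoke Theorem \ref{thm:fibration}, which tells us that $\gamma$ restricted to $\gamma^{-1}(\rho(S))$ is a product bundle $\rho(S) \times \exp(\SS_u \cap \lier_0)/(H_0)_u \to \rho(S)$. In particular, each fiber is $(H_0)_u$-equivariantly identified with $\exp(\SS_u \cap \lier_0)/(H_0)_u$, so the problem reduces to computing this quotient for a fixed choice of $u \in S$.

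Next I would reinterpret this quotient using the group $G_1$ defined right after Theorem \ref{thm:fibration}: $G_1$ is the fixed-point group of $\sigma\delta$ on $G^{(u)}$, it is reductive with Cartan involution (the restriction of) $\delta$, and the corresponding Cartan decomposition reads $G_1 \simeq (H_0)_u \exp(\SS_u \cap \lier_0)$. Under this identification, the $(H_0)_u$-action on $\exp(\SS_u \cap \lier_0)$ by $*$ agrees with conjugation in $G_1$ of the maximal compact subgroup on the noncompact factor, since elements of $H_0$ are fixed by $\theta$ (so $*$-conjugation reduces to ordinary conjugation on the fiber, which is contained in $\lier_0$).

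Now I would apply the classical quotient result (cited in the paragraph as \cite[Theorem 11.11]{Helm-Schwarz01}): for the reductive group $G_1$ with Cartan decomposition as above, the inclusion of a maximal abelian subspace $\liet \subseteq \SS_u \cap \lier_0$ induces a homeomorphism
\[
\exp(\liet)/W(S,\liet) \tosim \exp(\SS_u \cap \lier_0)/(H_0)_u,
\]
where $W(S,\liet)$ is the normalizer-modulo-centralizer of $\liet$ in $(H_0)_u$. Independence of the isomorphism type of $W(S,\liet)$ from the choices of $u \in S$ and $\liet$ was noted in the text and follows from Proposition \ref{prop:a} together with Proposition \ref{prop:Slocal} (which guarantees that $(H_0)_u$ and $\SS_u$ do not change as $u$ varies in $S$).

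There is no real obstacle here; the only thing worth checking carefully is that the $(H_0)_u$-action on the fiber $\exp(\SS_u \cap \lier_0)u$ of $\pi$ transfers, under translation by $u^{-1}$, to the adjoint action on $\exp(\SS_u \cap \lier_0)$, so that the classical Cartan quotient theorem applies verbatim. This is immediate because $(H_0)_u$ fixes $u$ under $*$, so $h*(\exp(\xi)u) = (\Ad(h)\exp(\xi))u$ for $h \in (H_0)_u$ and $\xi \in \SS_u \cap \lier_0$.
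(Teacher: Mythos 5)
Your proposal follows essentially the same route the paper takes: Theorem \ref{thm:fibration} identifies the fiber with $\exp(\SS_u\cap\lier_0)/(H_0)_u$, and the paragraph preceding the corollary supplies the group $G_1$, its Cartan decomposition, and the citation of \cite[Theorem 11.11]{Helm-Schwarz01} that converts this into $\exp(\liet)/W(S,\liet)$. One small slip: you justify the reduction of the $*$-action to conjugation by asserting that ``elements of $H_0$ are fixed by $\theta$,'' which is false in general ($H_0 = G^\sigma\cap U$, not $G^\theta\cap U$). Fortunately, you immediately give the correct reason in your last paragraph—namely that $(H_0)_u$ fixes $u$, so $h*(\exp(\xi)u)=\exp(\Ad(h)\xi)\,u$—so the argument stands; just delete the erroneous parenthetical.
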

  
We now have the result that $H$-conjugacy is the same as $H_0$-conjugacy for translated tori.

\begin{theorem} \label{thm:equiv} Let $A_i$ be   maximal $(\sigma,\theta_{u_i})$-split tori, $i=1$, $2$. Let $h\in H$ such that $h*A_1u_1=A_2u_2$. Then there is an $h'\in H_0$ such that $h'*A_1u_1=A_2u_2$.
\end{theorem}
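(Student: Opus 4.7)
The plan is to exploit a principal point $x\in A_1u_1$ together with the Kempf--Ness description of closed $H$-orbits given by Theorem~\ref{thm:homeom}.

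First, I will reduce to the case that both $A_1$ and $A_2$ are $\delta$-stable. By the Oshima--Matsuki fact invoked in the proof of Proposition~\ref{prop:standard}, any maximal $(\sigma,\theta_{u_i})$-split torus is $((H_0)_{u_i})^0$-conjugate to a $\delta$-stable one. Such a conjugation is effected by an element $g_i\in H_0$ that fixes $u_i$; replacing $A_iu_i$ by $g_i*(A_iu_i)$ and $h$ by $g_2hg_1^{-1}$ leaves the statement to prove unchanged, so we may assume both $A_i$ are $\delta$-stable, whence $A_iu_i\subset\M$ by Lemma~\ref{lem:Au}.

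Next, choose a principal point $x\in A_1u_1$ and write $x=a_1u_1$ with $a_1\in A_1$. Then $y:=h*x\in A_2u_2\subset\M$ is principal as well. Since $x\in\M$, the orbit $H*x$ is closed by Theorem~\ref{thm:homeom}, and $H*x\cap\M$ is a single $H_0$-orbit; as $x$ and $y$ both lie in this intersection, there is an $h'\in H_0$ with $h'*x=h*x$.

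To upgrade the equality $h'*x=h*x$ to the equality $h'*(A_1u_1)=A_2u_2$, set $k:=(h')^{-1}h\in H_x$. Since $a_1\in A_1$, we have $A_1u_1=A_1\cdot x$, and Proposition~\ref{hproposition} then gives $k*(A_1u_1)=\conj(k)(A_1)\cdot x$; so it suffices to show $\conj(k)(A_1)=A_1$. A direct computation from $\theta_{u_1}(a)=a^{-1}$ for $a\in A_1$ and the commutativity of $A_1$ shows $\Lie(A_1)\subset\SS_x$. Since $x$ is principal, $\SS_x$ is abelian of dimension $\dim\quot{X}{H}$, which coincides with $\dim A_1$ (the maximal dimension of a $(\sigma,\theta_x)$-split torus). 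Hence $\Lie(A_1)=\SS_x$. Triviality of the slice representation at the principal point $x$ then yields $\Ad(k)|_{\SS_x}=\id$, so $\Ad(k)$ preserves $\Lie(A_1)$; consequently $\conj(k)$ preserves the unique connected Lie subgroup of $G$ with that Lie algebra, namely $A_1$. Therefore $k*(A_1u_1)=A_1u_1$, whence $h'*(A_1u_1)=h*(A_1u_1)=A_2u_2$.

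The delicate point is the identification $\Lie(A_1)=\SS_x$: it couples the maximality of $A_1$ as a $(\sigma,\theta_{u_1})$-split torus with the principal-point hypothesis at $x$. Once this identification is in hand, the triviality of the slice representation does all the remaining work.
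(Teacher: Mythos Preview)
Your proof is correct and takes a genuinely different route from the paper's.

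The paper argues structurally via the Cartan decomposition. It first reduces to \emph{standard} tori, then uses that $\conj(h)$ must carry the maximal compact subgroup $A_1\cap U$ to $A_2\cap U$; writing $h=\exp(Y)h_0$ with $h_0\in H_0$, uniqueness of the Cartan decomposition forces $h*$ and $h_0*$ to agree on $A_1u_1\cap U$. After this reduction the noncompact directions $\liea_i\cap\lier_0$ are handled by the Lie-algebra conjugacy statement Proposition~\ref{prop:a}, producing an element of $((H_0)_{u_1})^0$. Your argument instead stays inside the Kempf--Ness framework: reduce only to $\delta$-stable tori so that $A_iu_i\subset\M$, pick a principal point $x\in A_1u_1$, and use Theorem~\ref{thm:orbitclosure} to find $h'\in H_0$ with $h'*x=h*x$; then principality (trivial slice representation, $\Lie(A_1)=\SS_x$) forces $k=(h')^{-1}h$ to fix $A_1u_1$.

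Two small points worth making explicit. First, the existence of a principal point in $A_1u_1$: this holds because $A_1$ is a maximal $\sigma$-split torus in $G^{(u_1)}$, the principal locus for $H_{u_1}$ acting on $P_{u_1}$ is open and dense and meets $A_1$, and by the slice theorem at $u_1$ such points are principal for $H$ on $X$ (the paper uses this implicitly in Proposition~\ref{prop:standard}). Second, the equality $\dim A_1=\dim\quot X H$ you invoke follows from the slice theorem at $u_1$, which identifies $\quot X H$ locally with $\quot{P_{u_1}}{H_{u_1}}$.

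What each approach buys: yours is shorter and more conceptual, avoiding the reduction to standard tori and the separate treatment of compact and noncompact parts. The paper's argument, however, tracks precisely how $h'$ acts on $A_1u_1\cap A_0$ versus the noncompact piece; this extra bookkeeping is exactly what feeds into Corollary~\ref{cor:equiv}, where equivalence of standard tori is detected entirely by $W_0^*$ acting on the compact intersections $A_iu_i\cap A_0$.
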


\begin{proof} We can assume that the $A_iu_i$ are standard. Since $h*u_1\in\M$, there is an $h_1\in H_0$ such that $h*u_1=h_1*u_1$. Thus $h*u_1\in A_2u_2\cap U$, hence $h*u_1=a_2u_2$ where $a_2\in A_2\cap U\subset A_0$. Now $\conj(h)$ must send the maximal compact subgroup $A_1\cap U$ of $A_1$ to $A_2\cap U$, so that $h*(A_1u_1\cap U)=A_2u_2\cap U$. Write $h=\exp(Y)h_0$ where $h_0\in H_0$ and $Y\in \lieh\cap \lier_0$. Then by uniqueness of the Cartan decomposition one obtains that $h*=h_0*$ on $A_1u_1\cap U$.  Thus we may assume that $h*$ is the identity on $A_1u_1\cap U$.  Then the projections of   $\liea_1$ and $\liea_2$  to $\lieq\cap\lier_0$ are maximally $\sigma$-split subalgebras commuting with $\liea_1\cap\liea_0$. By Proposition \ref{prop:a}   there is an $h'\in ((H_0)_{u_1})^0$  centralizing   $A_1\cap A_0$ such that  $\Ad(h')\liea_1=\liea_2$.
\end{proof}

\begin{remark} \label{rem:S} In the last line of the proof above, $h'\in (H_0)_{u_1}$ already implies that $h'$ fixes all of the stratum $S$ containing $u_1$, hence that it centralizes $A_1\cap A_0$ (which is generated by $Su_1\inv$).
\end{remark}
From Corollary \ref{cor:weylK0'} we obtain

\begin{corollary}\label{cor:equiv}
Let the $A_iu_i$ be standard maximal $(\sigma,\theta_{u_i})$-split tori, $i=1$, $2$. Then $A_1u_1$ and $A_2u_2$ are equivalent if and only if there is a  $w\in W^*_0$ such that $w*(A_1u_1\cap A_0)=A_2u_2\cap A_0$.
\end{corollary}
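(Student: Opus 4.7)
The plan is to handle the two implications independently: the forward direction reduces, via Theorem \ref{thm:equiv} and Proposition \ref{prop:w*Y}, to coset bookkeeping on $A_0$, while the reverse direction repeats the last step of the proof of Theorem \ref{thm:equiv} after absorbing $w$.

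For $(\Rightarrow)$, I would first invoke Theorem \ref{thm:equiv} to upgrade a given $h*A_1u_1=A_2u_2$ with $h\in H$ to one with $h\in H_0$. Standardness yields
$$
A_iu_i\cap A_0 \;=\; A_iu_i\cap U \;=\; (A_i\cap U)u_i,
$$
since $u_i\in A_0\subset U$ and $A_i\cap U\subset A_0$. Because $h\in H_0\subset U$ and $\theta$ preserves $U$, the $*$-action by $h$ sends $U$ into $U$, and intersecting $h*A_1u_1=A_2u_2$ with $U$ gives $h*(A_1u_1\cap A_0)=A_2u_2\cap A_0$. Proposition \ref{prop:w*Y}, applied with $Y:=A_1u_1\cap A_0\ni u_1$ and $Y':=A_2u_2\cap A_0$, then produces $w\in W_0^*$ with $w*y=h*y$ for every $y\in Y$, which is the desired conclusion.

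For $(\Leftarrow)$, I would lift $w\in W_0^*$ to a representative $n\in N_0^*\subset H_0$. Since $n$ normalizes $A_0$ and $\beta(n)\in A_0$, the $*$-action of $n$ preserves $A_0$, and by Proposition \ref{hproposition} the pair $(\conj(n)A_1,\,n*u_1)$ is again a standard, $\delta$-stable, maximal $(\sigma,\theta_{n*u_1})$-split torus translated by a point of $A_0$; after this replacement the hypothesis rewrites as the clean equality $A_1u_1\cap A_0=A_2u_2\cap A_0$. Equating the two cosets of subgroups of the abelian group $A_0$ forces $A_1\cap U=A_2\cap U$ and $u_1u_2^{-1}\in A_1\cap U$, and a short computation using that $A_1$ is abelian shows that shifting $u_1$ within this common coset does not destroy the $(\sigma,\theta_{u_1})$-split property, so I may further assume $u_1=u_2=:u$.

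At this point I am exactly in the position of the last line of the proof of Theorem \ref{thm:equiv}, and I would finish by the same argument. Working inside $\lieg^{(u)}$, where $\sigma=\theta_u$, both $\liea_1\cap\lier_0$ and $\liea_2\cap\lier_0$ are maximal abelian subspaces of $\lier_0\cap\lieq$ centralizing the common compact part $\lieb_0:=\liea_1\cap\liea_0$, so Proposition \ref{prop:a}(2) furnishes $h'\in((H_0)_u)^0$ in the centralizer of $\lieb_0$ with $\Ad(h')(\liea_1\cap\lier_0)=\liea_2\cap\lier_0$. Such $h'$ fixes $u$, centralizes the common compact part, and matches the noncompact parts, so $h'*(A_1u)=A_2u$, establishing equivalence. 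The main obstacle I anticipate is justifying the hypotheses of Proposition \ref{prop:a}(2) in the restricted setting inside $\lieg^{(u)}$ — in particular that $\lieb_0$ plays the role of a maximal $\sigma$-split commutative subalgebra there — but this is forced by the maximality and $\delta$-stability of the $A_i$ combined with the reduction above, just as in Theorem \ref{thm:equiv}.
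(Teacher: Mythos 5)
Your proof is correct and follows the route the paper intends: the forward implication is exactly Theorem \ref{thm:equiv} to get $h\in H_0$, standardness to rewrite $A_iu_i\cap A_0=A_iu_i\cap U$, and then Proposition \ref{prop:w*Y} (the $K_0'=K_0$ case of Corollary \ref{cor:weylK0'}, which is what the paper cites); and the reverse implication is obtained by lifting $w$ to $N_0^*$, absorbing it, and rerunning the final step of the proof of Theorem \ref{thm:equiv} via Proposition \ref{prop:a}(2) inside $\lieg^{(u)}$. The residual worry you flag about the hypothesis of Proposition \ref{prop:a}(2) is a subtlety already present in the paper's own proof of Theorem \ref{thm:equiv} (and handled there via Remark \ref{rem:S}), not a new gap introduced by your argument.
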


\begin{corollary}
Let $A$ be maximal $(\sigma,\theta_u)$-split where $u\in A_0$. Then $W^*_{H}(Au)$ is finite
\end{corollary}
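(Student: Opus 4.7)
The plan is to embed $W^*_H(Au)$ as a discrete subgroup of a compact Lie group; finiteness then follows since a discrete subgroup of a compact Lie group is finite.

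Any $h\in N^*_H(Au)$ automatically satisfies $\conj(h)(A)=A$ (unpack $h*Au=Au$ using that $h*u\in Au$), and it acts on $Au\simeq A$ as the affine map $au\mapsto \conj(h)(a)\,b_h\,u$, where $b_h:=h*u\cdot u\inv\in A$. This yields an injective homomorphism
\[
\Phi\colon W^*_H(Au)\hookrightarrow \Aut(A)\ltimes A,\qquad [h]\mapsto(\conj(h)|_A,\,b_h).
\]
The projection $p_1$ to $\Aut(A)$ takes values in the Weyl group $N_G(A)/Z_G(A)$ of the torus $A$ in the reductive group $G$, which is finite; denote its kernel by $T\hookrightarrow A$.

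The key step, and the one I expect to contain all the real content, is to show that $T$ lies in the compact subtorus $A_c:=A\cap U$. For $h\in Z_H(A)\cap N^*_H(Au)$ we have $h*u\in Au\subseteq\M$; since $u\in\M$, the orbit $H*u$ is closed and satisfies $H*u\cap\M=H_0*u$ by Theorem~\ref{thm:orbitclosure}. So $h*u=h_0*u$ for some $h_0\in H_0$, giving $b_h=h_0\,u\,\theta(h_0)\inv\,u\inv$ as a product of four elements of $U$, hence $b_h\in A\cap U=A_c$. This is the place where the Kempf--Ness description of closed orbits from Section~2 is used in an essential way; everything else is formal. The payoff is that $\Phi$ actually lands in the \emph{compact} Lie group $(N_G(A)/Z_G(A))\ltimes A_c$.

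It remains to show $W^*_H(Au)$ is discrete. Pick a principal point $x=a_0u\in Au$; such points form an open dense subset of $Au$, because by Remark~\ref{rem:principal} one has $P_x=A$ for principal $x$. Since $H_x$ acts trivially on $\SS_x=\liea$, it centralizes the connected torus $A$; then $h*x=x$ collapses to $h*u=u$, so $H_x=H_u\cap Z_H(A)=Z^*_H(Au)$ and the stabilizer of $x$ in $W^*_H(Au)$ is trivial. By Theorem~\ref{thm:generalslice}, $H*x$ and $Au=P_xx$ meet transversally at $x$ with complementary dimensions, so $H*x\cap Au$, and hence the orbit $W^*_H(Au)\cdot x$, is discrete. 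The identity component $W^*_H(Au)^0$ therefore has a connected orbit through $x$ contained in this discrete set, so it fixes $x$ and must be trivial. Thus $W^*_H(Au)$ is a discrete subgroup of the compact Lie group $(N_G(A)/Z_G(A))\ltimes A_c$, and is therefore finite.
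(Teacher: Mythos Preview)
Your argument has the right geometric ideas and is genuinely different from the paper's proof, but the last sentence contains a real gap. You show that $W^*_H(Au)^0$ is trivial, i.e.\ that $W^*_H(Au)$ is a $0$-dimensional Lie group, and that $\Phi$ injects it into the compact group $(N_G(A)/Z_G(A))\ltimes A_c$. From this you conclude that $W^*_H(Au)$ is a ``discrete subgroup'' of a compact group and hence finite. But ``$W^*_H(Au)$ is $0$-dimensional'' is \emph{not} the same as ``$\Phi(W^*_H(Au))$ is discrete in the subspace topology of the compact target''; the dense injection $\mathbb Z\hookrightarrow S^1$ by an irrational rotation is a continuous injective homomorphism from a discrete group into a compact group with infinite image. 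You have not shown that $\Im\Phi$ is closed, so the conclusion does not follow as written.

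The repair is already contained in your own ingredients. First, reduce to the case that $A$ is $\delta$-stable (Proposition~\ref{prop:standard}); you implicitly use this when invoking $Au\subset\M$ from Lemma~\ref{lem:Au}. Next, observe that your Kempf--Ness argument applies verbatim with $x$ in place of $u$ (it never used $h\in Z_H(A)$): for any $h\in N^*_H(Au)$ one has $h*x\in Au\subset\M$, hence $h*x\in H*x\cap\M=H_0*x$. Therefore the free orbit $W^*_H(Au)\cdot x$ lies in $H_0*x\cap Au$. This set is compact (closed in the compact orbit $H_0*x$) and discrete in $X$ (by your transversality argument, since $H_0*x\cap Au\subset H*x\cap Au$). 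A compact discrete set is finite, and freeness of the action at the principal point $x$ then gives $|W^*_H(Au)|<\infty$. This bypasses the embedding $\Phi$ entirely in the final step.

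For comparison, the paper's proof is structurally different: it reduces to the standard case, uses Theorem~\ref{thm:equiv} to replace $H$ by $H_0$, then filters $W^*_{H_0}(Au)$ through the finite groups $W^*_0$ and $W(S,\liet)$ attached to the stratum $S\ni u$. Your route is more self-contained (it uses only transversality and the Kempf--Ness description of closed orbits, not the stratification), at the cost of the extra care needed above.
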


 \begin{proof}
We may assume that $Au$ is standard. Let $h\in W^*_{H_0}(Au)=W^*_{H}(Au)$. Modifying $h$ by    an element  of $W^*_0$ we may assume that $h$ is the identity on $Au\cap A_0$. Then the projection $\liet$ of $\liea$ to $\SS_u\cap\lier_0$ is maximal abelian and $h$ induces an element of $W(S,\liet)$ where $S$ is the stratum containing $u$. Thus $W^*_H(Au)$ is finite. 
\end{proof}

\begin{corollary}
Let the $A_iu_i$ be as in Theorem \ref{thm:main}. Then for each $i$ the mapping from the principal points of $A_iu_i$ to $\quot X{H}$ is the quotient by a free action of $W^*_{H_0}(A_iu_i)$.
\end{corollary}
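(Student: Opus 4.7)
The plan is to verify three things: (a) the map from $A_iu_i$ to $\quot X{H}$ factors through the action of $W^*_{H_0}(A_iu_i)$; (b) on principal points, two points of $A_iu_i$ have the same image only if they lie in the same $W^*_{H_0}(A_iu_i)$-orbit; and (c) this action is free on the set of principal points.

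Item (a) is essentially by definition: by Lemma \ref{lem:Au} we have $A_iu_i\subset\M$, and by Theorem \ref{thm:homeom} the map $\M\to\quot X{H}$ identifies precisely the $H_0$-orbits, so any element of $W^*_{H_0}(A_iu_i)$, being represented by an element of $H_0$, produces the same point of $\quot X{H}$. For (b), let $x_1,x_2$ be principal points of $A_iu_i$ with the same image; then $h*x_1=x_2$ for some $h\in H_0$. By Remark \ref{rem:principal} each $P_{x_j}$ is a maximal $(\sigma,\theta_{x_j})$-split torus. Writing $x_j=a_ju_i$ with $a_j\in A_i$ one checks directly that $A_i$ is $\theta_{x_j}$-split (using $\theta_{x_j}=\conj(a_j)\theta_{u_i}$ and commutativity of $A_i$), so by maximality $P_{x_1}=P_{x_2}=A_i$. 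Then Proposition \ref{hproposition} gives $\conj(h)A_i=A_i$ and $h*(A_ix_1)=A_ix_2$; since $A_ix_1=A_iu_i=A_ix_2$ we conclude $h\in N^*_{H_0}(A_iu_i)$, hence it represents the required $w\in W^*_{H_0}(A_iu_i)$ with $w*x_1=x_2$.

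For freeness (c), suppose $w\in W^*_{H_0}(A_iu_i)$ fixes a principal point $x$, and pick a representative $h\in N^*_{H_0}(A_iu_i)$ with $h*x=x$, so $h\in H_x$. The slice representation of $H_x$ on $\SS_x=T_eP_x$ is trivial by definition of principal point. Since the $H_x$-action on the transversal $P_xx$ is by the formula $h\cdot(px)=\conj(h)(p)\,x$, it is the conjugation action on $P_x$; because conjugation is a Lie group automorphism and $P_x$ is connected, triviality of its differential at $e$ forces it to be trivial on $P_x$ itself. Thus $h*$ is the identity on $P_xx=A_iu_i$ (recalling $P_x=A_i$), so $h\in Z^*_{H_0}(A_iu_i)$ and $w=e$.

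Combined with Theorem \ref{thm:main} (every principal orbit meets exactly one $A_iu_i$), (a)–(c) give precisely the claim. The main obstacle is the freeness step, where one must upgrade the triviality of the slice representation on $T_eP_x$ to triviality on $P_x$; this is handled by the connectedness of $P_x$ plus the fact that the slice action on $P_x$ is by group automorphisms, but it is what makes principal — rather than merely closed — orbits essential to the statement.
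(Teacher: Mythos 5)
Your proof is correct and follows the paper's approach. Parts (a) and (b) recapitulate the discussion following the definition of equivalence together with the argument in the proof of Theorem \ref{thm:main}, while (c) supplies the freeness step — upgrading triviality of the slice representation on $\SS_x$ to triviality of the conjugation action on the connected torus $P_x=A_i$ — which the paper leaves to the reader.
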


 \begin{proposition} \label{prop:Aunique}
 Let $Au$ be a standard maximal $(\sigma,\theta_u)$-split torus. Let $B$ denote $A\cap A_0$. Let $S$ be a stratum of $A_0$ such that $Bu\cap S$ has the same dimension as $S$. Then, up to equivalence, $Au$ is determined by $S$. 
   \end{proposition}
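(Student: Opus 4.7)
The plan is to apply Corollary \ref{cor:equiv}. Suppose we have two data $(A_1,u_1)$ and $(A_2,u_2)$, both satisfying the hypothesis for the same stratum $S\subseteq A_0$, and set $B_i = A_i\cap A_0$. Since each $A_i$ is standard, $A_iu_i\cap A_0 = B_iu_i$, so Corollary \ref{cor:equiv} reduces the equivalence of $A_1u_1$ and $A_2u_2$ to producing $w\in W_0^*$ with $w*(B_1u_1) = B_2u_2$.

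As a preliminary reduction I would pick $b_iu_i\in B_iu_i\cap S$ (nonempty by the dimension hypothesis) and replace $u_i$ by $b_iu_i$; this leaves the coset $B_iu_i$ unchanged while arranging $u_i\in S$. Comparing tangent spaces at $u_i$ then gives $\liea_S\subseteq\lieb_i$: by Proposition \ref{prop:Slocal} the stratum $S$ is an open subset of the coset $A_Su_i$, so its tangent space at $u_i$ translates to $\liea_S$ at the identity, and the hypothesis $\dim(B_iu_i\cap S) = \dim S$ with $u_i\in B_iu_i\cap S$ forces this space into $\lieb_i$. Hence $A_S\subseteq B_i$, and $B_iu_i$ contains the canonical coset $c_S := A_Su_i$, which houses $S$ and is independent of the choice of $u_i\in S$.

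The crux is to upgrade this to the equality $B_i = A_S$, so that $B_iu_i = c_S$ for both $i$. For this I would adapt the centralizer argument from the proof of Proposition \ref{prop:standard}. Let $C_i := A_i\cap\exp(\lier_0)$ be the noncompact part of the $\delta$-stable torus $A_i$. Maximality of $A_i$ as a $(\sigma,\theta_{u_i})$-split abelian subgroup forces $B_i = A_0\cap Z_G(C_i)$: any $a\in A_0$ centralizing $C_i$ also commutes with $B_i\subseteq A_0$, hence with all of $A_i$, and being itself $(\sigma,\theta_{u_i})$-split would enlarge $A_i$ unless it already lies in $A_i\cap A_0 = B_i$. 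The inclusion $A_S\subseteq A_0\cap Z_G(C_i)$ is immediate from Proposition \ref{prop:Slocal}, which says $\liea_S$ commutes with all of $\lieg^{(u_i)}\supseteq\liec_i$. For the reverse inclusion I would exploit the description of $\liec_i$ as a maximal abelian subspace of $\SS_{u_i}\cap\lier_0$ (from the proof of Theorem \ref{thm:main}) together with the dimension hypothesis to pin down $A_0\cap Z_G(C_i)$ as precisely $A_S$.

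Once $B_1u_1 = c_S = B_2u_2$ is established, $w=\id$ satisfies the requirement of Corollary \ref{cor:equiv}. The main obstacle is precisely the final equality $B_i = A_S$: the containment $A_S\subseteq B_i$ is immediate from tangent spaces, but the reverse requires a delicate combination of the maximality of $A_i$, the identification $B_i = A_0\cap Z_G(C_i)$, the maximal abelian character of $\liec_i$ in $\SS_{u_i}\cap\lier_0$, and the precise role of the dimension hypothesis in ruling out extra compact directions in $B_i$.
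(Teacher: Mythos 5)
Your overall strategy is a genuine reorganization of the paper's argument, and a clean one: rather than reconstructing $A$ piece by piece (as the paper does via Corollary \ref{cor:bconja} for the noncompact part together with Remark \ref{rem:S}), you invoke Corollary \ref{cor:equiv} to reduce the entire question to comparing the compact pieces $B_iu_i = A_iu_i\cap A_0$. That reduction is valid, and your preliminary steps are sound: replacing $u_i$ by a point of $B_iu_i\cap S$ to arrange $u_i\in S$, the tangent-space containment $A_S\subseteq B_i$, the characterization $B_i = A_0\cap Z_G(C_i)$ from maximality of $A_i$, and the observation (correctly extracted from the proof of Theorem \ref{thm:main}: the centralizer in $\SS_{u_i}$ of a generic $Z\in\liec_i$ is exactly $\liea_i$, so $\liec_i$ is self-centralizing, hence maximal abelian, in $\SS_{u_i}\cap\lier_0$) all check out.

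However, the proof as written is incomplete at exactly the point you flag: you do not actually establish the reverse inclusion $B_i\subseteq A_S$ (equivalently $A_0\cap Z_G(C_i)\subseteq A_S$), only indicate ingredients you expect to combine. This is the entire content of the proposition — the paper compresses it into the single assertion ``Since $A\cap A_0$ is generated by $Su^{-1}$,'' which is the same as $B = A_S$, so you have correctly identified what needs proving, but you have not proved it. The argument you would want here is essentially the one from the proof of Proposition \ref{prop:standard}: for $a\in A_0$ with $au$ near $u$ in $S$, the relations $\theta_{au}(Z) = \theta_u(Z) = -Z$ for generic $Z\in\liec$ force $a$ to centralize $Z$, hence $\log a\in\{Y\in\SS_u\mid[Z,Y]=0\} = \liea$ and so $a\in B$; this gives $A_S\subseteq B$ cleanly (as you found) and, combined with the hypothesis that forces $\dim B = \dim S = \dim A_S$, yields equality. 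Without pinning down that the dimension hypothesis actually forces $\dim B = \dim S$, the crucial step remains open in your write-up, so the proposal stops short of a proof.
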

  
\begin{proof}
We may assume that $u\in S$. Now $\liea\cap\lier_0$ is maximal $\sigma$-split in   $\SS_u\cap\lier_0$.  By Corollary \ref{cor:bconja} and Remark \ref{rem:S}, $\liea\cap\lier_0$ is unique, up to the action of  $(H_0)_u$. Since $A\cap A_0$ is generated by $Su\inv$, $A$ is determined by $S$, up to equivalence.
\end{proof}   

Now we give an algorithm to find a minimal collection of standard tori $A_iu_i$ whose images cover $\quot X{H}$. Let $S_1,\dots,S_r$ be strata of $A_0$ such that the strata of $A_0/W^*_0$ are the $\rho(S_i)$. Let $S$ be one of the $S_i$ and let $u\in S$. Let $\liet$ be a maximal toral subalgebra in $\SS_u\cap\lier_0$.  If $\dim\liet+\dim S=\dim\quot X{H}$, then let $C$ be the connected subtorus of $A_0$ generated by $Su\inv$ and let $B$ denote $\exp(\liet)$. Then $A:=BC$ is a maximal $(\sigma,\theta_u)$-split torus and  we add $Au$ to our collection. If $\dim\liet+\dim S<\dim\quot X{H}$ do not add anything. By Proposition \ref{prop:Aunique} we obtain a set $\{A_iu_i=B_iC_iu_i\}$ whose images cover $\quot X{H}$. Now it is only a matter of removing one of any pair of tori $A_iu_i$ and $A_ju_j$ whenever $C_iu_i$ and $C_ju_j$ are carried one to the other by an element of  $W_0^*$.  Thus we obtain a minimal collection as in Theorem \ref{thm:main}.

\begin{remark}
Let $S_1,\dots,S_p$ be the open strata of $A_0$ and let $A$ be a maximal standard \st-split torus containing $A_0$. Then, up to equivalence,  $A_1u_1=\dots=A_pu_p=A$.
\end{remark}

\begin{example} \label{sl2example:part2} We revisit Example \ref{sl2example:part1}. Let $u=\sm ab{-b}a\in A_0=\SO(2)$. Since $\sm \lambda 00{\lambda\inv}\in H$ sends $u$ to $\sm {\lambda^2a} b {-b}{\lambda^{-2}a}$, the isotropy group of $H$ at $u$ is $\pm I$ for $u\neq\pm v$ and $H$ for $u=\pm v$. Thus the strata of $A_0$ are the two points $\pm v$ and the two connected components of $A_0\setminus\{\pm v\}$. Since $W_0^*$ is trivial, these are also the strata of $A_0/W_0^*$. The maximal \st-split torus $A_0$ comes from  the open strata and the translated tori $\{\sm a b b a\mid a^2-b^2=1, a>0\}(\pm v)$ correspond to the two closed strata.  
\end{example}

 \section{When $K\neq G^\theta$}
Let $K'$ be an open subgroup of $K=G^\theta$ and let $\beta\colon X'=G/K'\to X$ be the corresponding cover.
Let $\M'$ denote $\beta\inv(\M)$. We show that $\M'$ can serve as a Kempf-Ness set for the $H$-action on $X'$.

\begin{lemma}\label{lem:M'} 
Let   $x\in X'$.  
Then the orbit $Hx$ is closed if and only if it intersects $\M'$.
\end{lemma}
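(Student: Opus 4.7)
The plan is to reduce the lemma to Theorem \ref{thm:homeom} for $X$ by establishing the equivalence
\[
Hx \text{ is closed in } X' \iff H\beta(x) \text{ is closed in } X.
\]
Given this equivalence, the lemma follows immediately: since $\M' = \beta^{-1}(\M)$ and $\beta$ is $H$-equivariant, the condition $Hx \cap \M' \neq \emptyset$ is the same as $H\beta(x) \cap \M = \beta(Hx) \cap \M \neq \emptyset$, which by Theorem \ref{thm:homeom} is equivalent to $H\beta(x)$ being closed in $X$.

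The first ingredient is that $\beta$ is a \emph{finite} covering. Since $K'$ is open in $K$, it contains the identity component $K^0$, and $K = G^\theta$, being the fixed-point set of an involution of a real algebraic group, has only finitely many connected components. Hence $[K:K']$ is finite and $\beta : X' \to X$ is both proper and a local diffeomorphism.

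For the forward direction of the equivalence, if $Hx$ is closed in $X'$ then $H\beta(x) = \beta(Hx)$ is closed in $X$ by properness of $\beta$. For the converse, assume $H\beta(x)$ is closed in $X$. Then $\beta^{-1}(H\beta(x))$ is a closed $H$-invariant subset of $X'$, and its $H$-orbits are in bijection with the $H_{\beta(x)}$-orbits on the finite fiber $\beta^{-1}(\beta(x))$, so they are finite in number. A standard path-lifting argument, using that $\beta$ is a local diffeomorphism and $H\beta(x)$ is a submanifold of $X$, shows that each such orbit $Hy$ is open in $\beta^{-1}(H\beta(x))$; being one of finitely many pairwise disjoint open subsets that cover this set, $Hy$ is also closed in it and hence closed in $X'$. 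In particular $Hx$ is closed in $X'$.

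The main obstacle is the converse direction, where one needs the topological input that each $H$-orbit inside $\beta^{-1}(H\beta(x))$ is both open and closed. Everything else is a direct bookkeeping reduction to Theorem \ref{thm:homeom} via the $H$-equivariance of $\beta$ and the defining property $\M' = \beta^{-1}(\M)$.
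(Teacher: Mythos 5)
Your proof is correct and follows essentially the same route as the paper's own (very terse) argument: both directions rest on the fact that $\beta\colon X'\to X$ is a finite covering, so that $\beta(Hx)=H\beta(x)$ is closed when $Hx$ is, and so that the $H$-orbits inside the closed set $\beta^{-1}(H\beta(x))$ are open and finitely many, hence closed. You have simply spelled out the covering-space details that the paper leaves implicit.
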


\begin{proof}
If $x\in\M'$, then   the $H$-orbits in the closed set $\beta\inv(H\beta(x))$ are open and closed, hence $Hx$ is closed. Conversely, if $Hx$ is closed, then $H\beta(x)$ intersects $\M$, so that $Hx$ intersects $\M'$.
\end{proof}

\begin{lemma}\label{lem:cover}
Let $x\in X'$. Then $\overline{Hx}\cap\M'$ is a single $H_0$-orbit.
\end{lemma}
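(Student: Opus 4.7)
The plan is to reduce the statement to Theorem~\ref{thm:orbitclosure} for $X$ by lifting a Kempf-Ness gradient retraction along the covering $\beta$.

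First, I would pin down where $\overline{Hx}\cap\M'$ can lie. By Theorem~\ref{thm:orbitclosure} applied to $\beta(x)\in X$, there is a unique $H_0$-orbit $H_0 y_0=\overline{H*\beta(x)}\cap\M$. Continuity of $\beta$ gives $\beta(\overline{Hx})\subseteq\overline{H*\beta(x)}$, so $\overline{Hx}\cap\M'\subseteq\beta^{-1}(H_0 y_0)$. Since $\beta^{-1}(y_0)$ is discrete and stable under $(H_0)_{y_0}$, the set $\beta^{-1}(H_0 y_0)$ decomposes as a disjoint union of closed $H_0$-orbits, indexed by the $(H_0)_{y_0}$-orbits on $\beta^{-1}(y_0)$. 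The task reduces to showing $\overline{Hx}$ meets exactly one of these.

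Second, I would construct a continuous $H_0$-equivariant retraction $r'\colon X'\to\M'$ by lifting through $\beta$ the Heinzner-Schwarz gradient retraction $r\colon X\to\M$. Recall that $r$ is defined by the negative gradient flow of a moment-map norm-squared built from $\rho$; each trajectory stays in a single $H$-orbit closure and converges to a point of $\overline{H*y}\cap\M$. Because $\beta$ is a covering, the flow on $X$ lifts uniquely from any initial point in $X'$ by path-lifting, and convergence downstairs in $X$ forces convergence upstairs in $X'$. The limit defines $r'$ with $\beta\circ r'=r\circ\beta$ and $r'(z)\in\overline{Hz}\cap\M'$ for every $z$. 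In particular $r'(x)\in\overline{Hx}\cap\M'$, giving non-emptiness.

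Third, for uniqueness of the $H_0$-orbit I would use $r'$. Since $r'$ fixes $\M'$ pointwise, $\overline{Hx}\cap\M'\subseteq r'(\overline{Hx})\subseteq\beta^{-1}(H_0 y_0)$. For each $z\in\overline{Hx}$ the lifted flow path from $z$ to $r'(z)$ is a continuous curve in $\overline{Hx}$ whose $\beta$-image ends in $H_0 y_0$, so $r'(z)$ lies in the $H_0$-orbit corresponding to the connected component of $\beta^{-1}(H_0 y_0)$ selected by that trajectory. The main obstacle is to show this component is independent of $z$: I expect this to follow from $H_0$-equivariance of $r'$ together with the fact that $Hx$ is the continuous image of $H$, forcing the trajectories starting in $Hx$ to select a single component, with continuity then propagating the conclusion to all of $\overline{Hx}$.
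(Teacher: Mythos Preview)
Your proposal is correct and follows essentially the same route as the paper: lift the negative gradient flow of $\|\mu\|^2$ through the covering $\beta$ to obtain a continuous $H_0$-equivariant retraction $X'\to\M'$, then use connectedness to select a single $H_0$-orbit in $\beta^{-1}(H_0 y_0)$. The paper streamlines your final step by first reducing to the case $H$ connected (using that $H_0$ meets every component of $H$), so that $\overline{Hx}$ is connected and its image under the retraction $\psi'_\infty$ lands in a single $H_0$-orbit without further argument.
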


\begin{proof}
Since $H_0$ acts transitively on the connected components of $H$, we can reduce to the case that $H$ is connected.  For $y\in X$, let $f(y)$ denote the norm squared of $\mu(y)$ using the invariant inner product $B$ of Lemma \ref{lem:innerproduct}, and consider the flow $\psi_t$ on $X$ which is the negative of the gradient of $f$. (We can use the inner product on $X$ induced by the K\"ahler form on $U\c$ \S\ref{sec:moment}.) \ Then $\psi$ is a flow along $H$-orbits and it is $H_0$-equivariant. Moreover, since all our data is real analytic, the flow extends to $+\infty$ to give a deformation retraction of $X$ onto $\M$ (see \cite{SchRutgers} and \cite{HeinznerStoetzel}).  Now the flow lifts to $X'$ to give a deformation retraction $\psi_t'$ of $X'$ onto $\M'$. If $x\in X'$, then $\psi'_\infty(\overline{Hx})$ is connected. We know that the lemma holds for $\beta(x)$ in place of $x$, so that $\overline{Hx}\cap\M'$ is a finite union of $H_0$-orbits which cover   $\overline{H\beta(x)}\cap\M$. But connectedness now implies that there is only one $H_0$-orbit.   
\end{proof}

\begin{corollary}\label{cor:thmsX'}
The analogues of Theorems \ref{thm:orbitclosure} and \ref{thm:homeom} hold for $X'$ and $\M'$ and we have a quotient $\quot {X'}H$. 
\end{corollary}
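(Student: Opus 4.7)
The plan is to assemble the corollary from Lemmas \ref{lem:M'} and \ref{lem:cover} together with the $H_0$-equivariant deformation retraction $\psi_t'$ constructed in the proof of Lemma \ref{lem:cover}, whose time-infinity map takes values in $\M'$.

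The analogue of Theorem \ref{thm:orbitclosure} is essentially immediate: for $x\in X'$, applying $\psi_\infty'$ to $\overline{Hx}$ produces a point of $\overline{Hx}\cap \M'$, so this intersection is nonempty, and Lemma \ref{lem:cover} asserts that it is exactly one $H_0$-orbit. The second assertion of the analogue of Theorem \ref{thm:homeom}, that the closed $H$-orbits in $X'$ are precisely those which meet $\M'$, is the content of Lemma \ref{lem:M'}.

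It remains to construct the quotient $\quot{X'}H$ and to establish the homeomorphism $\M'/H_0 \simeq \quot{X'}H$. I would define the equivalence relation $\sim$ on $X'$ exactly as in \S\ref{quotientproperties}, namely $x\sim y$ iff the $H_0$-orbits in $\M'$ associated to $x$ and $y$ by Lemma \ref{lem:cover} agree; then $\quot{X'}H$ is the set of $\sim$-classes with the quotient topology. The inclusion $\M'\hookrightarrow X'$ descends to a continuous map $\M'/H_0\to\quot{X'}H$, surjective because $x\sim\psi_\infty'(x)\in\M'$ for every $x\in X'$, and injective because two distinct $H_0$-orbits in $\M'$ give inequivalent points by Lemma \ref{lem:cover}. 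Conversely, $\psi_\infty'\colon X'\to\M'$ is continuous and $H_0$-equivariant, and by Lemma \ref{lem:cover} it sends each $H$-orbit closure into the unique $H_0$-orbit of $\M'$ it contains. Hence $\psi_\infty'$ factors through a continuous map $\quot{X'}H\to\M'/H_0$ which is manifestly inverse to the map above.

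The only point that requires care is the existence and continuity of $\psi_\infty'$ on all of $X'$, which rests on real-analyticity and the references \cite{SchRutgers,HeinznerStoetzel} invoked in the proof of Lemma \ref{lem:cover}; once this is in hand, the corollary is a purely formal consequence of the two preceding lemmas.
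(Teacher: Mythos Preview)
Your argument is correct and is precisely the natural elaboration of what the paper leaves implicit: in the paper the corollary is stated without proof, as an immediate consequence of Lemmas~\ref{lem:M'} and~\ref{lem:cover} together with the retraction $\psi'_\infty$ introduced in the proof of the latter. Your write-up supplies exactly those routine verifications (surjectivity and injectivity of $\M'/H_0\to\quot{X'}H$, and continuity of the inverse via $\psi'_\infty$), and your caveat about the existence and continuity of $\psi'_\infty$ resting on real analyticity and the cited references is well placed.
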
 

Let $X_0'$ denote $G_0/K_0'$ where  $K_0'=K'\cap U$. Then $\beta\colon X_0'\to X_0$ is a cover of the same degree as $X'\to X$. Then one easily shows

\begin{corollary}
 $\M'=\{\exp(Z)v \mid v\in X_0'\text{ and }Z\in\SS_{\beta(v)}\cap\lier_0\}$.
\end{corollary}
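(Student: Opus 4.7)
My plan is to establish the two inclusions separately, translating between representatives in $G$ using the Cartan decomposition and Proposition \ref{prop:M}, which parameterizes $\M$ via $u\in X_0$ and $\xi\in\SS_u\cap\lier_0$ (with uniqueness coming from $G=G_0\exp(\lier_0)$).

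For the inclusion $\supseteq$, I would start with $v\in X_0'$ and $Z\in\SS_{\beta(v)}\cap\lier_0$. Choose $g_0\in G_0$ representing $v$, so $u:=\beta(v)=g_0\theta(g_0)^{-1}\in X_0$. A direct computation gives
\[
\beta(\exp(Z)v)=\exp(Z)u\exp(-\theta(Z)).
\]
Since $Z\in\SS_u$ we have $\theta(Z)=-u\inv Zu$, hence $\exp(-\theta(Z))=u\inv\exp(Z)u$, and the above collapses to $\exp(2Z)u$. Because $2Z\in\SS_u\cap\lier_0$, Proposition \ref{prop:M} places this in $\M$, so $\exp(Z)v\in\M'$.

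For the inclusion $\subseteq$, take $w\in\M'$. Proposition \ref{prop:M} yields a unique decomposition $\beta(w)=\exp(\xi)u$ with $u\in X_0$ and $\xi\in\SS_u\cap\lier_0$. Set $Z:=\xi/2$ and pick any lift $g_0\in G_0$ of $u$ (possible since $u\in X_0=\beta(G_0)$). With $\tilde g:=\exp(Z)g_0$, the computation above shows $\tilde g\theta(\tilde g)\inv=\exp(2Z)u=\beta(w)$. Hence, writing $w=g_wK'$, there exists $k\in K$ with $g_w=\tilde gk$. What remains, and is the main obstacle, is to force $k\in K'$ by a judicious choice of $g_0$.

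The crux is the identity $K=K_0K'$. This holds because the Cartan decomposition gives $K=K_0\exp(\liek\cap\lier_0)$, and $K'$, being an open subgroup of $K$, contains the identity component $K^0$ and in particular $\exp(\liek\cap\lier_0)$; hence $K\subseteq K_0K'$. Different lifts of $u$ differ by right multiplication by some $k_0\in K_0$ (since $g_0\theta(g_0)\inv=g_0'\theta(g_0')\inv$ forces $g_0\inv g_0'\in G_0\cap K=K_0$); replacing $g_0$ by $g_0k_0$ changes $\tilde g$ to $\tilde gk_0$ and $k$ to $k_0\inv k$. The identity $K=K_0K'$ then lets us select $k_0$ so that $k_0\inv k\in K'$. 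With this choice $w=\exp(Z)v$, where $v:=g_0K'\in X_0'\hookrightarrow X'$ (the embedding is injective because $G_0\cap K'=K_0'$), and $Z\in\SS_{\beta(v)}\cap\lier_0$ since $\beta(v)=u$. This completes the characterization.
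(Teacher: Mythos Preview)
Your proof is correct and supplies exactly the kind of direct verification the paper omits (the paper simply says ``Then one easily shows'' and gives no argument). The key step you isolate---namely $K=K_0K'$, obtained from the Cartan decomposition $K=K_0\exp(\liek\cap\lier_0)$ together with $\exp(\liek\cap\lier_0)\subset K^0\subset K'$---is precisely what is needed to adjust the lift $g_0$ so that the leftover factor lands in $K'$; the rest is the same computation that underlies Proposition~\ref{prop:M}.
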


\begin{remark}
Let $v$, $v'\in A_0K_0'$, let $Z\in\SS_{\beta(v)}\cap\lier_0$ and let $Z'\in\SS_{\beta(v')}\cap\lier_0$. Then
$\exp(Z)vK'=\exp(Z')v'K'$ if and only if $Z=Z'$ and $v=v'$.
\end{remark}

 Let $S$ be a stratum of $A_0$, and let $S'$ denote $\beta\inv(S)\cap A_0K_0'$. Then $S'=\cup_{j=1}^r S'_j$ where the $S'_j$ are connected and are covering spaces of $S$. Let $u\in S$ and $h\in (H_u)_v$, $v\in S'_j$ for some $j$. Then $h$ preserves $S'_j$ and if $h$ does not act trivially on $S'_j$, then, since $S'_j\to S$ is an $(H_u)_v$-equivariant cover, it follows that $h$ acts nontrivially on $S$, a contradiction. Thus $(H_u)_v$ acts trivially on $S'_j$ and is independent of $v\in S'_j$, so that  we can use  $\cup_S\cup_jS'_j$ as strata for $A_0K_0'$.
  Let $A$ be $(\sigma,\theta_{\beta(v)})$-split and $\delta$-stable where  $v\in A_0K_0'$.   We say that $Av$ (or $A$) is \emph{standard\/} if    $A\cap U\subset A_0$. Then as in Proposition \ref{prop:standard} one shows that for every maximal $(\sigma,\beta(v))$-split  torus $A$, $v\in A_0K_0'$, there is an $h\in H_0$ such that $hAv=hAh\inv hv$ is standard.  

Let $\pi'\colon \M'\to X_0'$ be the canonical map, and let $\pi_0'\colon(\pi')\inv(A_0K_0')\to A_0K_0'$ be the induced mapping.  Let  $\gamma'$ be the canonical mapping of $\M'\to X_0'/H_0\simeq (A_0K_0')/W_0'$ and let $\rho'\colon A_0K_0'\to A_0K_0'/W_0'$ be the quotient mapping. Then we have  the analogue of Theorem \ref{thm:fibration}:  

\begin{theorem}\label{thm:fibration2}
Let $S'$ be a   stratum of  $A_0K_0'$ and let $v\in S'$. Then, over $S'$, $\pi_0'$ is a product bundle   $S'\times\exp(S_{\beta(v)}\cap\lier_0)\to S'$ and, over $\rho'(S')$, $\gamma$ is a product   bundle $\rho'(S')\times \exp(\SS_{\beta(v)}\cap\lier_0)/(H_0)_v\to \rho'(S')$.
\end{theorem}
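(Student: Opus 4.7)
The plan is to adapt the proof of Theorem \ref{thm:fibration} to the covering setting, with the role of $W_0^*$ taken over by $W_0'$ and the role of strata $S$ of $A_0$ taken over by the connected components $S'$ of $\beta\inv(S) \cap A_0K_0'$. Two ingredients from the preceding material do most of the work: the description of $\M'$ (with the uniqueness remark $\exp(Z)vK' = \exp(Z')v'K'$ iff $Z=Z'$ and $v=v'$ for $v,v' \in A_0K_0'$), and the observation made just before the theorem that $(H_u)_v$ acts trivially on $S'_j$ when $v \in S'_j$.

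For the first assertion, I would unpack $(\pi_0')^{-1}(S')$ via the description of $\M'$: every element is of the form $\exp(Z)v'$ with $v' \in S'$ and $Z \in \SS_{\beta(v')} \cap \lier_0$. Since $\beta(v') \in S$ for $v' \in S'$, Proposition \ref{prop:Slocal} gives $\SS_{\beta(v')} = \SS_{\beta(v)}$ for a fixed $v \in S'$, so the pair $(v', Z)$ ranges over $S' \times \exp(\SS_{\beta(v)} \cap \lier_0)$. The uniqueness remark identifies this parametrization with $(\pi_0')^{-1}(S')$, yielding the trivial bundle structure.

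For the second assertion, suppose $h \in H_0$ and $h \cdot \exp(Z_1)v_1 = \exp(Z_2)v_2$ with $v_i \in S'$, $Z_i \in \SS_{\beta(v_i)} \cap \lier_0$. Applying the $H_0$-equivariant projection $\pi'$ gives $hv_1 = v_2$ in $X_0'$. Writing $v_i = y_i K_0'$ with $y_i \in A_0$, Corollary \ref{cor:weylK0'} produces $w \in W_0'$ with $wv_1 = v_2$. Since $W_0'$ permutes the strata and both $v_1,v_2$ lie in the connected component $S'$, one gets $w \cdot S' = S'$. Letting $W'_{S'}$ denote the elements of $W_0'$ preserving $S'$ modulo those fixing $S'$ pointwise, we obtain $\rho'(S') \simeq S'/W'_{S'}$. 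Now $w^{-1}h \in (H_0)_{v_1}$, and the paragraph preceding the theorem shows $(H_0)_{v_1}$ acts trivially on $S'$; thus on the base, $h$ and $w$ agree, while on the fiber $\exp(\SS_{\beta(v)} \cap \lier_0)$ the remaining freedom is precisely the $\Ad$-action of $(H_0)_{v_1}$. This identifies $(\gamma')^{-1}(\rho'(S'))$ with $\rho'(S') \times \exp(\SS_{\beta(v)} \cap \lier_0)/(H_0)_v$.

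The main obstacle will be bookkeeping the covering-space structure: one must verify that elements of $W_0'$ appearing in the argument really do preserve the particular connected component $S'$ (not merely $\beta^{-1}(S) \cap A_0K_0'$), and that the local stabilizer $(H_0)_v$ acting on the fiber matches (not $(H_0)_{\beta(v)}$, which could be strictly larger). Both points are handled by combining the trivial-action statement before the theorem with the uniqueness clause in the remark, but care is needed so that the two product structures (over $S'$ and over $\rho'(S')$) glue consistently.
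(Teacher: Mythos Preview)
Your proposal is correct and matches the paper's intended argument: the paper does not give an explicit proof of Theorem \ref{thm:fibration2} but presents it as the direct analogue of Theorem \ref{thm:fibration}, and your adaptation---replacing Proposition \ref{prop:w*Y} by Corollary \ref{cor:weylK0'}, $W_0^*$ by $W_0'$, and invoking the observation that $(H_u)_v$ acts trivially on the component $S'_j$---is exactly that analogue. The bookkeeping concerns you flag (that $w$ preserves the particular component $S'$, and that the fiber isotropy is $(H_0)_v$ rather than $(H_0)_{\beta(v)}$) are handled precisely by the ingredients you cite, so there is no gap.
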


There is also clearly the analogue of Corollary \ref{cor:fiber}, whose statement we omit. For  $A$ maximal $(\sigma,\beta(v))$-split, let $W_H'(Av)$ denote the group of automorphisms of $Av$ coming from elements of $H$, and define $W'_{H_0}(Av)$ similarly. Then the obvious analogues of Theorem \ref{thm:equiv} and Corollary \ref{cor:equiv} hold and the group $W'_H(Av)=W'_{H_0}(Av)$ is finite.

If one wants to find translated tori $Av$ which minimally cover $\quot {X'}H$, then one can proceed  as before (discussion after Proposition \ref{prop:Aunique}). Here is another method. Let $Au$ be one of the maximal tori occurring in Theorem \ref{thm:main}. We can assume that it is standard. It is covered by tori $Av_j$ where $\beta(v_j)=u$, $v_j\in A_0K'$.   Let $W_{0,u}'$ denote the subgroup of $W_0'$ whose image in $W_0^*$ fixes $(A\cap A_0)u$. Then $W_{0,u}'$ acts on $\beta\inv(u)\cap A_0K'$ and one chooses a maximal subcollection of the $Av_j$ where the $v_j$  are on disjoint $W_{0,u}'$ orbits. Then starting with a minimal collection $A_iu_i$ as in Theorem \ref{thm:main} one arrives at a minimal collection $A_iv_{ij}$
which surjects onto $\M'/H_0\simeq\quot{X'}H$.

\section{Stratification via a Weyl group}
   Now we would like to  compare our stratification  of $A_0$ with that given by a Weyl group. It turns out that $W^*_{H}(A_0)=W^*_0$ is not large enough for our purposes  (see Example \ref{sl2example:part3} below). Thus we have to consider the action of  the   Weyl group $W:=W^*_{H\c}(A_0)$ where  $H\c$ is the Zariski closure of $H$ in $U\c$. Since $e\in\M$, the orbits $H*e$ and $H_\C*e$ are closed and the argument of Remark \ref{rem:w0*finite} shows that  $W$ is finite. 

\begin{proposition} \label{prop:finerstrata} Let $\{S_i\}$ be the set of connected $H$-isotropy type strata of $A_0$  and let $\{T_j\}$ be the  connected strata for the action of $W=W^*_{H\c}(A_0)$.
 Then $\{T_j\}$ is a refinement of $\{S_i\}$, i.e., every $S_i$ is a union of some of the $T_j$.
  \end{proposition}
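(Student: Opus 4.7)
The plan is to prove that $\lieg^{(u)}$, and hence the isotropy group $H_u=(G^{(u)})^\sigma$, is constant as $u$ varies within a single $W$-stratum $T_j$. Once this is established, Proposition \ref{prop:Slocal} tells us that points with a common $H$-isotropy lie in a single connected $H$-isotropy stratum, and since each $T_j$ is connected by construction it must be contained in a single $S_i$.

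The main computation expresses $\lieg^{(u)}$ through finitely many character conditions on $u$. Decompose $\lieg_\C=\lieg_\C^{\liea_0}\oplus\bigoplus_{\alpha\neq 0}\lieg_\alpha$ into weight spaces for $A_0$, where $\lieg_\C^{\liea_0}$ denotes the centralizer of $\liea_0$. Because $A_0$ is $(\sigma,\theta)$-split, $\tau=\theta\sigma$ acts trivially on $\liea_0$ and therefore preserves each weight space $\lieg_\alpha$; since $\tau$ has finite order we may further split $\lieg_\alpha=\bigoplus_\zeta \lieg_\alpha^\zeta$ into eigenspaces for the roots of unity $\zeta$ in the spectrum of $\tau|_{\lieg_\alpha}$. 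The involution $\tau_u=\conj(u)\circ\tau$ then acts on $\lieg_\alpha^\zeta$ as the scalar $\alpha(u)\zeta$, whence
\[
\lieg^{(u)}\cap\lieg_\alpha \;=\; \bigoplus_{\zeta:\,\alpha(u)\zeta=1}\lieg_\alpha^\zeta.
\]
Thus $\lieg^{(u)}$ is determined by the finite Boolean datum recording which pairs $(\alpha,\zeta)$ satisfy $\alpha(u)\zeta=1$.

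On the Weyl-group side, $W=W^*_{H_\C}(A_0)$ is generated by reflections $s_\alpha$ attached to the same set of nonzero weights $\alpha$. Writing the $*$-action as $h*a=(hah\inv)\beta(h)$, one verifies that the stabilizer equation $s_\alpha*u=u$ translates, for each $\alpha$, into precisely one of the character conditions $\alpha(u)\zeta=1$. Consequently $W_u$ records exactly the same Boolean datum that determines $\lieg^{(u)}$. Since a $W$-stratum $T_j$ is a connected component of the locus $\{a\in A_0:W_a=W_u\}$, this datum is constant on $T_j$, hence so is $\lieg^{(u)}$, and the refinement follows.

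The hard part will be the identification in the last paragraph: that the reflections generating $W^*_{H_\C}(A_0)$ correspond one-to-one with the weight-eigenvalue pairs $(\alpha,\zeta)$ appearing in the decomposition above, and that the twisted fixed-point equation for $s_\alpha$ matches the eigenvalue equation $\alpha(u)\zeta=1$. This is the content of the structure theory of the $(\sigma,\theta)$-Weyl group, and I would import it from \cite{Helm-Schwarz01} and \cite{Matsuki97} rather than rederive it. As a softer alternative, if the explicit reflection/character matching proves cumbersome, I would argue directly that both stratifications are cut out by the same finite family of character conditions on $A_0$ and deduce the refinement by a connectedness/semicontinuity argument, without matching individual reflections.
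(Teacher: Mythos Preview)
Your approach is genuinely different from the paper's, and the gap is precisely where you flag it. The paper does \emph{not} attempt a global matching of character data. Instead it works locally: at $u\in A_0$ it passes to the transversal, replacing $G$ by $G^{(u)}$ and $H_\C$ by $H_\C\cap U_\C^{(u)}$, which reduces to the case $u=e$ and $\sigma=\theta$ (so the $*$-action is ordinary conjugation). In that setting one must show $B_2^0\subset B_1^0$ where $B_1=A_0^H$ and $B_2=A_0^{W_e}$. The key step is to produce, for \emph{every} root $\alpha$ of $A_0$ on $\lieg$, an element of $W_e$ acting as the reflection $s_\alpha$; this is done by a rank-one reduction and Richardson's theorem \cite{Rich82a}. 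It follows that $B_2^0\subset(\Ker\alpha)^0$ for all $\alpha$, so $B_2^0$ centralizes $\lieu_\C$ and is fixed by $H_\C^0$. A separate argument using Corollary~\ref{cor:bconja} then handles the component group of $H$.

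Your weight-space computation of $\lieg^{(u)}$ is correct, but the assertion that ``$W_u$ records exactly the same Boolean datum'' is not justified, and I do not believe it can be imported from \cite{Helm-Schwarz01} or \cite{Matsuki97} in the form you need. Two concrete obstacles: first, $W^*_{H_\C}(A_0)$ is not in general generated by linear reflections $s_\alpha$---as the paper's $\SL(8)$ example shows, it typically contains pure translations and has an affine flavor, so your description of its generators is already off. Second, even granting a reflection for each root $\alpha$, the $*$-fixed-point condition $s_\alpha*u=u$ is a \emph{single} affine condition on $u$, whereas a given root space $\lieg_\alpha$ may carry several $\tau$-eigenvalues $\zeta$, each contributing a distinct condition $\alpha(u)\zeta=1$ to the determination of $\lieg^{(u)}$. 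Without producing a separate element of $W$ for each pair $(\alpha,\zeta)$, you cannot conclude that constancy of $W_u$ forces constancy of $\lieg^{(u)}$; the $W$-datum could a priori be strictly coarser, which would prove the \emph{wrong} inclusion. Your ``softer alternative'' amounts to the same missing step phrased differently. The paper's local reduction sidesteps all of this: after passing to $G^{(u)}$ one has $\tau=\id$, so there is only the single eigenvalue $\zeta=1$ per root, and one reflection per root suffices to force centrality.
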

 
 \begin{proof}
 Let $u\in A_0$. We have to show that the stratum of $A_0$ containing $u$ for the $H$-isotropy stratification contains the corresponding stratum  for the $W$-isotropy stratification near the point $u$. We can consider everything in the transversal at the point $u$, so we replace $G$ by $G^{(u)}$, $U\c$ by $U\c^{(u)}$ and $H\c$ by $H\c \cap U\c^{(u)}$. Thus we may assume that $u=e$ and  that $\sigma=\theta$. Now the complexification $A\c$ of $A$ is a maximal $\sigma$-split torus in $U\c$. The subgroup $W_e$ of $W$ is  generated by elements of $H\c$ which preserve $A_0$ under conjugation.   Near $e$, the stratum of $H$ is $B_1$ where $B_1:=\{b\in A_0\mid  H  \cdot b=b\}$ and the stratum of $W_e$ is $B_2$ where  $B_2:=\{b\in A_0\mid W_e\cdot b=b\}$. 
 We need to show that $B_2^0\subset B_1^0$.
 
 Consider $B_2$ and let $\alpha\colon A_0\to S^1$ be a root of the $A_0$-action on  $\lieg$.  Let $A_0 \subset A$ where $A$ is maximal $\sigma$-split in $G$.
 We show that there is an element of $W_e$ which sends $\alpha$ to $\alpha\inv$, as follows. The root $
\alpha$ is the restriction to $A_0$ of  a root of $A$ and a root of the complexification $A\c$, which we will also call $\alpha$. Let $(A\c)_\alpha$ be the kernel of $\alpha$  and consider the centralizer  of $(A\c)_\alpha$ in $U\c$ divided by $(A\c)_\alpha$. Call this group $C$. Then $C$ has maximal $\sigma$-split torus $A\c/(A\c)_\alpha$, and the roots of this torus acting on $\lie c$ are multiples of $\alpha$. Then there is a $c\in (C\cap H\c)^0$ which  acts as inverse on $A\c/(A\c)_\alpha \simeq\C^*$ \cite{Rich82a}. Thus $c$ gives us an element of $W_e$ which fixes $(A_0)_\alpha$ and acts as inverse on $A_0/(A_0)_\alpha$. It follows that 
  $B_2^0\subset\Ker\alpha$. Since $\alpha$ is an arbitrary root of $A_0$, we find that   $B_2^0$ centralizes $\lieu\c$, hence $B_2^0$ is fixed by $H\c^0$.  If $h\in H$, then $h$  sends $A$ to a maximal $\sigma$-split subtorus $\conj(h) A$ of $G$. Since $A$ is maximally compact, by Corollary \ref{cor:bconja} there is an element of $(H_0)^0$ which  when composed with $h$ stabilizes $A$. Thus, modulo $H^0$ (which fixes $B_2^0$), we may assume that $h\in N_{H}(A)$. Thus $h$ preserves $A_0$ and it acts on $B_2^0$ as an element of $W_e$, i.e., trivially. Thus $B_2^0\subset B_1^0$.  
  \end{proof}
  
  If $K'$ is an open subgroup of $K$, then define $W':=W'_{H\c}(A_0K_0')$ as the subgroup of $H_\C$ stabilizing $A_0K_0'$ divided by the subgroup centralizing $A_0K_0'$. Then $W'$ is finite since $W'\to W$ has finite kernel and $W$ is finite.  
   \begin{corollary}
Let $\{S_i'\}$ be the set of connected $H$-isotropy type strata of $A_0K_0'$  and let $\{T_j'\}$ be the  connected strata for the action of $W'$.
 Then $\{T_j'\}$ is a refinement of $\{S_i'\}$, i.e., every $S_i'$ is a union of some of the $T_j'$.
 \end{corollary}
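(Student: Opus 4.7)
The plan is to adapt the argument of Proposition \ref{prop:finerstrata} with $A_0K_0'$ and $W'$ in the roles of $A_0$ and $W$. Fix $v \in A_0K_0'$; I need to show that the connected component of the $W'$-isotropy stratum through $v$ is contained in the connected component of the $H$-isotropy stratum through $v$. The question is local at $v$, so as in the proof of Proposition \ref{prop:finerstrata} I would pass to the transversal, reducing to the case where $v$ is the identity coset and $\sigma=\theta$.

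The central step is to exhibit, for each root $\alpha$ of $A_0$ on $\lieg$, an element of $W'$ acting on $A_0K_0'$ as the reflection that fixes the kernel of $\alpha$ and inverts $A_0/(A_0)_\alpha$ (in the appropriate lifted sense on $A_0K_0'$). Proposition \ref{prop:finerstrata} produces such a reflection in $W$ via an element $c\in(C\cap H\c)^0$, where $C$ is the centralizer of $(A\c)_\alpha$ in $U\c$ modulo $(A\c)_\alpha$. Since $(C\cap H\c)^0$ is connected and normalizes $A\c$, it cannot permute nontrivially the finitely many cosets of $A_0\cap K_0'$ in $A_0$; equivalently, it stabilizes the locally closed subset $A_0K_0'$. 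Hence the same element $c$ represents a class in $W'$, and this class induces exactly the prescribed reflection on $A_0K_0'$ (compatibly with its reflection on $A_0K_0$ under the finite cover $\beta|_{A_0K_0'}$).

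With the root reflections secured inside $W'$, the remainder of the proof of Proposition \ref{prop:finerstrata} applies essentially verbatim. The joint fixed set of all root reflections on $A_0K_0'$ near $v$ is contained in the common kernel of the roots, hence is annihilated by $\lieu\c$ and therefore fixed by $H\c^0$. This places it inside the connected component of the $H$-isotropy stratum through $v$. Modifying by an element of $N_H(A_0K_0')$ (which merely permutes connected strata) then yields the asserted refinement $T_j'\subset S_i'$.

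The main obstacle is the rigorous verification that $(C\cap H\c)^0$ stabilizes $A_0K_0'$ and not merely the larger set $A_0K_0$. The connectedness argument sketched above is the key ingredient, but it requires careful tracking of how $H\c$ acts on the cosets of $K_0'$ inside $K_0$ and, implicitly, of the fact (noted before the statement) that $W'\to W$ has finite kernel so that finiteness and connectedness considerations apply. Granted this compatibility, the proof is otherwise a direct translation of Proposition \ref{prop:finerstrata} to the covering setting.
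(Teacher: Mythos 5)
Your approach is genuinely different from the paper's, and it has a gap — one you partly acknowledge but do not close.

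You propose to re-run the argument of Proposition~\ref{prop:finerstrata} directly on $A_0K_0'$, which requires producing, for each root $\alpha$, an element of $W'$ that acts as the reflection fixing $\ker\alpha$. Your mechanism is that the same $c\in(C\cap H\c)^0$ that works on $A_0$ should lift to $W'$ because $(C\cap H\c)^0$ is connected and "cannot permute the finitely many cosets nontrivially." This is where the argument fails to be rigorous. The map $W'\to W$ has finite kernel but is not asserted (and need not) be surjective — compare Lemma~\ref{lem:wprime}, where the image of $W_0'\to W_0^*$ is an explicit, possibly proper, subgroup. To invoke connectedness you first need $(C\cap H\c)^0$ (or at least the element $c$) to act on $A_0K_0'$, i.e., to lie in the normalizer $N'$ whose quotient defines $W'$. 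But that is exactly the statement you are trying to prove; the "cosets of $A_0\cap K_0'$ in $A_0$" formulation does not by itself give an action on $A_0K_0'\subset X_0'$ compatible with the $*$-action, and the passage between conjugation, $*$-action, and the covering $\beta|_{A_0K_0'}$ is precisely where the subtlety lives. In short, you are assuming the root reflection already represents a class in $W'$ before justifying it.

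The paper's proof does something quite different and avoids the issue altogether: it never re-derives root reflections in the covering setting. Instead it uses Proposition~\ref{prop:finerstrata} as a black box, obtaining $B_2^0\subset B_1^0$ in $A_0$, and then lifts this inclusion of fixed-point sets to the finite cover. Concretely, with $u=\beta(v)$, it sets $B_1=A_0^{H_u}$, $B_2=A_0^{W_u}$, $C_1=(A_0K_0')^{H_v}$, $C_2=(A_0K_0')^{W_v'}$, notes that $[H_u:H_v]$ is finite, identifies $C_i^0$ in terms of $B_i^0$ over $K_0'$, observes that $(H_u)^0$ fixes $v$ and $B_2^0$ while representatives of $H_v/(H_u)^0$ may be taken to normalize $A_0$, and concludes $C_2^0\subset C_1^0$. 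This route sidesteps entirely the question of whether the root reflections lift to $W'$, which is the exact point your proposal leaves open.
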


\begin{proof} 
Let $v\in A_0K_0'$ and set $u=\beta(v)$. Set $C_1=(A_0K_0')^{H_v}$ and $C_2=(A_0K_0')^{W'_v}$. We need to show that $C_2^0\subset C_1^0$. By Proposition \ref{prop:finerstrata} we have that $B_2^0\subset B_1^0$ where $B_2=A_0^{W_u}$ and $B_1=A_0^{H_u}$. 
Note that $[H_u:H_v]$ is finite and that $C_1^0=(B_1^{H_v})^0K_0'$.  We have the analogous result for $C_2^0$.
If $h\in (H_u)^0$, then $h$ fixes   $v$ and $B_2^0$. Clearly   every element of $H_v/(H_u)^0$ is represented by an element normalizing $A_0$. Thus $H_v$ acts trivially on $((B_2K_0')^{W'_v})^0=C_2^0$ so that $C_2^0\subset C_1^0$.
\end{proof}

    \section{Some examples}

 \begin{example} \label{sl2example:part3}. We reexamine  Example \ref{sl2example:part1}, \ref{sl2example:part2} using the Weyl group.   The group $W_0^*$ is trivial, but $W^*_{H\c}(A_0)$ is generated by the element $\sm i 0 0 {-i}$ which acts on $A_0$ by sending $\sm a b {-b} a $ to $\sm {-a} b {-b} {-a}$.  The fixed points of this action are $\{\pm v\}$, hence we obtain again the isotropy type stratification of   $H$. 
\end{example}

 \begin{example}\label{ex:extremes}
    Suppose that $A_0=\{e\}$. By \ref{cor:bconja} there is only one $(H_0)^0$-conjugacy class of maximal \st-split tori in $G$. Thus $\M=H_0*A$ where $A$ is maximal \st-split. The argument of Proposition \ref{prop:w*Y} shows that $ \M/H_0\simeq A/W^*_{H_0}(A)\simeq \quot XH$. Now suppose that $\sigma$ and $\theta$ commute. Then for $h\in N_{H_0}^*(A)$, $\beta(h)\in A$ is an element of order 2, hence it is trivial. Thus $W^*_{H_0}(A)=W_{M_0}(A)$ is an ordinary Weyl group where $M_0=H_0\cap K_0$. This agrees with \cite[11.11]{Helm-Schwarz01} where it is assumed that $\theta$ is a Cartan involution. 
    
    Now suppose that every maximal \st-split torus in $G$ is compact. Then again there is only one $(H_0)^0$- conjugacy class of maximal \st-split tori and   we have $\quot XH\simeq A/W^*_{H_0}(A)$ where $A$ is maximal \st-split. This agrees with the case $G$ compact \S\ref{sec:compact}.
    \end{example}
    
    \begin{example}
Here is a case where $A_0$ is of dimension two and the group $W^*_0$ is   large. Let $U\c=\SL(8,\C)$, $U=\SU(8,\C)$ and $G=\SL(8,\R)$. Let $\delta$ be the associated Cartan involution $g\mapsto {}^t\bar g\inv$, $g\in G$.  Then $G_0=\SO(8,\R)$. The Lie algebra $\lier_0$ consists of the symmetric real matrices. Let $\theta$ be $\delta$ followed by conjugation with $\sm 0 I {-I} 0$ and let $\sigma$ be conjugation with $\sm I 0 0 {-I}$ where   $I$ is $4\times 4$. Then $\sigma$, $\theta$ and $\delta$ commute. The group $H$ is the set of real matrices $\sm A 0 0 B$ such that $\det AB=1$, and being in $H_0$ adds the condition that $A$ and $B$ are orthogonal. The group $K$ is a copy of $\Sp(8,\R)$. The dimension of the quotient of $G$ by $H$ and $K$ is the dimension of the quotient of $\liep\cap\lieq$ by the action of $M=H\cap K$. Now $M$ consists of matrices $\sm g 0 0 {{}^tg\inv}$ where $g\in\GL(4,\R)$ and $\liep\cap\lieq=\{\sm 0 A B 0\mid A$ and $B$ are skew symmetric$\}$. The representation of $\GL(4,\R)$ is isomorphic to that on $\wedge^2\R^4\oplus\wedge^2(\R^4)^*$ which gives us that the quotient has dimension $2$.

Let $J$ denote $\sm 0 1{-1}0$. Then 
$$
A_0=\left\{\left(\begin{matrix} aI & 0 & bJ & 0\\ 0 & a'I & 0 & b'J\\bJ & 0 & aI & 0\\0 & b'J & 0 & a'I\end{matrix}\right)\mid a^2+b^2=1,\ (a')^2+(b')^2=1\right\}
$$
is a maximal \st-split torus (since its dimension is 2) which is compact. Let $a(\eta_1,\eta_2)$ denote the element of $A_0$ as above with $a+ib=e^{i\eta_1}$ and $a'+ib'=e^{i\eta_2}$. Now we compute $W_0^*$.  If $\sm g 0 0 h\in H_0$, then it acts on an element $a\in A_0$ sending it to $\sm g 0 0 ha\sm {h^t} 0 0 {g^t}$. First suppose that $g=\sm \alpha 0 0 I$ and $h=\sm \alpha 0 0 I$ where $\alpha=\sm 0 1 1 0$.  Then $\sm g 0 0 h$ sends $a(\eta_1,\eta_2)$ to $a(-\eta_1,\eta_2)$. If we have $g=\sm J 0 0 I$ and $h=\sm {-J} 0 0 I$, then $a(\eta_1,\eta_2)$ is sent to $a(-\eta_1+\pi,\eta_2)$. Thus $W^*_0$ contains the  translation $a(\eta_1,\eta_2)\mapsto a(\eta_1+\pi,\eta_2)$. Of course we have elements of $W^*_0$ which do similar things to $\eta_2$ while leaving $\eta_1$ fixed. Finally, let $g=h=\sm 0 I I 0$. Then the action of this element interchanges $\eta_1$ and $\eta_2$. Thus we see that $W^*_0$ contains the semidirect product $W(\mathsf B_2)\ltimes(\Z/2\Z)^2$ where $W(\mathsf B_2)$ is the Weyl group of type $\mathsf B_2$ and $(\Z/2\Z)^2$ consists of the pure translations in $W^*_0$. We  claim that $W^*_{H\c}(A_0)$ is no bigger than this. Now the set of pure translations $(\Z/2\Z)^2$   is as large as possible (the pure translations have to be of order $2$ \cite[1.9]{Helm-Schwarz01}).  Thus the remaining elements of $W^*_{H\c}(A_0)$ fix $e\in A_0$, i.e., they are fixed by $\theta$. Moreover, the Weyl group has representatives which are in $U$, so that we are reduced to calculating $W_{M\cap U}(A_0)$. Now $M\cap U=\{\sm g 0 0 g\mid g\in\U(4,\C),\ \det g=\pm 1\}$. A matrix calculation now shows that one cannot obtain elements of the Weyl group of $A_0$ that we have not already seen. Thus $W^*_{H\c}(A_0)=W^*_0\simeq W(\mathsf B_2)\ltimes(\Z/2\Z)^2$.

Now a fundamental domain $D$ for the action of $W^*_0$ on $A_0$ consists of the elements $a(\eta_1,\eta_2)$ where $0\leq\eta_1\leq \eta_2\leq\pi/2$. Since $W^*_{H\c}(A_0)=W^*_0$, the strata for the action of $W^*_0$ and $H$ are the same. We now list the various strata $S$ occurring in $D$.

\smallskip
\noindent Case 1: $S=\{a(\eta_1,\eta_2)\mid0<\eta_1<\eta_2<\pi/2\}$ is the only two-dimensional stratum. The preimage of $S$ in the quotient $\quot X{H}$ is just a copy of $S$.

\smallskip
\noindent Case 2: $S=\{a(\eta_1,\eta_2)\mid 0<\eta_1=\eta_2<\pi/2\}$. Here the preimages of points of $S$ are one-dimensional. Let $a\in S$. Then  $(H_0)_a=\{\sm g 0 0 g\mid g$ is orthogonal and $gJ_2g\inv=J_2\}$ where $J_2=\sm J 0 0 J$ gives a complex structure on $\R^4$. Thus $(H_0)_a\simeq \U(2,\C)$. This group acts on $\SS_a\cap\lier_0$ which consists of matrices $\sm 0 C {-C} 0$ where $C$ is skew symmetric and $J_2C=-CJ_2$. This set of matrices is a complex vector space of dimension $2$, and we have the standard action of $\U(2,\C)$ on $\C^2$. The Lie algebra $\liet$ of a maximal torus  in $\exp(\SS_a\cap\lier_0)$ is generated by $\sm 0 C {-C} 0$ where $C=\sm 0\alpha {-\alpha} 0$ and $\alpha=\sm 0 I I 0$. Note that $\liet$ is stable under the action of $(W^*_0)_a\simeq\Z/2\Z$ (generated by the element reversing the order of $\eta_1$ and $\eta_2$), and the action is multiplication by $-1$.  We have $W(S,\liet)\simeq\Z/2\Z$,  and   $\exp(\SS_u\cap\lier_0)/(H_0)_a\simeq \R^{>0}/(\Z/2\Z)$ where the Weyl group action sends $r\in \R^{>0}$ to $1/r$. Thus the preimage of $S$ is isomorphic to $S\times\{r\in\R\mid r\geq 1\}$.

\smallskip
\noindent Case 3: $S=\{a(0,\eta_2)\mid 0 < \eta_2<\pi/2\}$ or $S=\{\eta_1,\pi/2)\mid 0<\eta_1<\pi/2\}$. As above,   a maximal  abelian subspace $\liet\subset\SS_a\cap\lier_0$ has dimension 1 and $W(S,\liet)\simeq\Z/2\Z$.  The fiber above any point $a\in S$ is isomorphic to $\{r\in\R\mid 1\leq r\}$.  

 \smallskip
\noindent Case 4: $S=\{e=a(0,0)\}$.    From our calculations above we have that $(H_0)_e=H_0\cap K_0=\{\sm g 0 0 g\mid g$ is orthogonal$\}$ and that $\SS_e\cap\lier_0=\liep\cap\lieq\cap\lier_0=\{\sm 0 C {-C} 0\mid C$ is skew symmetric$\}$. Thus we have the adjoint representation of $\Orth(4,\R)$. We choose as maximal toral subalgebra $\lie t$ the set of matrices
$$ 
\left(\begin{matrix} 0 & 0 & t_1J & 0\\ 0 & 0 & 0& t_2J\\-t_1J & 0 & 0 & 0\\ 0 & -t_2J & 0 & 0\end{matrix}\right).
$$
Then $(W_0^*)_e\simeq W(\mathsf B_2)\simeq W(S,\liet)$ acts on $\lie t$ in the standard way. Exponentiating we get pairs of positive real numbers $r_1$ and $r_2$ whose normal form under $W(\mathsf B_2)$ consists of $\{(r_1,r_2)\mid 1\leq r_1\leq r_2\}$. This is the preimage of $S$.

\smallskip
\noindent Case 5: $S=\{a:=a(\pi/2,\pi/2)\}$. Here $(H_0)_a$ consists of the matrices $\sm g 0 0 h$ such that $g$ and $h$ are orthogonal and commute with $J_2$. Thus our group is isomorphic to $\U(2,\C)\times \U(2,\C)$. The vector space $\SS_a\cap\lier_0$ consists of matrices  $\sm 0 C {C^t} 0$ where $C$ anticommutes with $J_2$. This is a complex vector space of dimension 4, and $\sm g 0 0 h$ acts sending $C$ to $gCh\inv$. Thus we have the obvious action of $\U(2,\C)\times \U(2,\C)$ on $\C^2\otimes_\C \C^2$. As maximal toral subalgebra we can choose $\lie t=\{\sm 0{t_1\alpha} {t_2\alpha} 0\}$ where $\alpha=\sm 0 11 0$ and $t_1$, $t_2\in\R$. Here  $(W_0^*)_a$ once again acts on $\liet$, but not faithfully. The image only contains the element interchanging $t_1\alpha$ and $t_2\alpha$. One easily calculates that $W(S,\liet)\simeq W(\mathsf B_2)$. Hence $\exp(\SS_a\cap\lier_0)/(H_0)_a\simeq\{(r_1,r_2)\in\R^2\mid 1\leq r_1\leq r_2\}$. This is the same fiber that we saw in Case 4.

\smallskip
\noindent Case 6: $S=\{a:=a(0,\pi/2)\}$. Here the story is slightly different. Again $(W^*_0)_a$ acts on an appropriately chosen  maximal abelian subspace $\liet\subset  \SS_a\cap\lier_0$ where $\liet\simeq\R^2$.  The image of $(W^*_0)_a \simeq(\Z/2\Z)^2$ in  $W(S,\liet)$ acts as a sign change  on one of the coordinates while $W(S,\liet)\simeq(\Z/2\Z)^2$ acts by sign changes of both coordinates.  Thus the fiber above $a$ is  naturally isomorphic to $\{(r,s)\in\R^2\mid r,\ s\geq 1\}$.
\end{example}

%%%%%%%%%%%%%%%%%%%%%%%%%%%%%%%%%%%%%%%%%%%%%%%%
%%%%%%%%%%%%%%%%%%%%%%%%%%%%%%%%%%%%%%%%%%%%%%%%

\newcommand{\noopsort}[1]{} \newcommand{\printfirst}[2]{#1}
  \newcommand{\singleletter}[1]{#1} \newcommand{\switchargs}[2]{#2#1}
  \def\cprime{$'$}
\providecommand{\bysame}{\leavevmode\hbox to3em{\hrulefill}\thinspace}
\providecommand{\MR}{\relax\ifhmode\unskip\space\fi MR }
% \MRhref is called by the amsart/book/proc definition of \MR.
\providecommand{\MRhref}[2]{%
  \href{http://www.ams.org/mathscinet-getitem?mr=#1}{#2}
}
\providecommand{\href}[2]{#2}


\begin{thebibliography}{HWD06b}

 
   
  \bibitem[HeiS07]{HeinznerSchwarz} P.~Heinzner and G.~Schwarz,
  \emph{Cartan decomposition of the moment map}, Math. Ann. \textbf{337} (2007), 197--232.  
  
  \bibitem[HSt07]{HeinznerStoetzel} P.~Heinzner and H.~St\"otzel, \emph{Semistable points with respect to real forms}, Math. Ann. \textbf{338} (2007), 1--9.

\bibitem[Hel78]{Helgason78}
S.~Helgason, \emph{Differential geometry, {Lie} groups and symmetric spaces},
  Pure and Applied mathematics, vol. XII, Academic Press, New York, 1978.

 
 
\bibitem[HelS01]{Helm-Schwarz01}
A.~G. Helminck and G.~W. Schwarz, \emph{Orbits and invariants associated with a
  pair of commuting involutions}, Duke Math. J. \textbf{106} (2001), no.~2,
  237--279.

 
 
  
\bibitem[Mie07]{Miebach} C.~Miebach, \emph{Matsuki's double coset decomposition via gradient maps},
J. of Lie Theory \textbf{18}, (2008), 555--580.

 
\bibitem[Mat97]{Matsuki97}
 T.~Matsuki, \emph{Double coset decompositions of reductive Lie groups arising from two involutions}, J. Alg. \textbf{197} (1997),
  49--91.
  
\bibitem[{\=O}M80]{Oshima-Matsuki80}
T.~{\=O}shima and T.~Matsuki, \emph{Orbits on affine symmetric spaces under the
  action of the isotropy subgroups}, J. Math. Soc. Japan \textbf{32} (1980),
  no.~2, 399--414.

 
\bibitem[Ric82]{Rich82a}
R.~W. Richardson, \emph{Orbits, invariants and representations associated to
  involutions of reductive groups}, Invent. Math. \textbf{66} (1982), 287--312.

\bibitem [Sch88]{SchRutgers}
G. ~W. Schwarz, 
 \emph {The topology of algebraic quotients},  Topological methods in algebraic transformation groups (New
              Brunswick, NJ, 1988), Progr. Math.  \textbf{80} (1989), Birkh\"auser, Boston, 135--151.
 
 
\end{thebibliography}
\end{document}